\newtheorem{theorem}{Theorem}[section]
\newtheorem{proposition}[theorem]{Proposition}
\newtheorem{lemma}[theorem]{Lemma}
\newtheorem{corollary}[theorem]{Corollary}
\theoremstyle{definition}
\newtheorem{example}[theorem]{Example}
\newtheorem{remark} [theorem] {Remark}
\begin{document}

\title{ Spectrum of Weighted Composition Operators \\
Part IV \\
Spectrum and essential spectra of weighted composition operators
in spaces of smooth functions on $[0,1]$}

\author{A. K. Kitover}

\address{Community College of Philadelphia, 1700 Spring Garden St., Philadelphia, PA, USA}

\email{akitover@ccp.edu}

\subjclass[2010]{Primary 47B33; Secondary 47B48, 46B60}

\date{\today}

\keywords{Disjointness preserving operators, spectrum, Fredholm spectrum, essential spectra}
\begin{abstract} We provide a description of the spectrum and essential spectra of invertible weighted composition operators acting in spaces $C^{(n)}[0,1]$ and Sobolev spaces $ W^{n,X} = \{ f        \in C^{(n-1)}[0,1] : f^{(n)}        \in X \}$ where $X$ is an interpolation space between $L^1[0,1]$ and $L^\infty[0,1]$.

\end{abstract}
\maketitle

\markboth{A.K.Kitover}{Spectrum of weighted composition operators. V}

\section{Introduction}

In~\cite{AAK} and~\cite{Ki1} -~\cite{Ki3} the author studied the spectrum and essential spectra of disjointness preserving operators in Banach lattices. Here we apply the results obtained in the aforementioned papers to describe essential spectra of invertible weighted composition operators in some algebras of smooth functions on the interval $[0,1]$. To be more precise, in Section 2 we describe essential spectra of invertible weighted compositions acting on spaces $C^{(n)}[0,1]$ of $n$ times continuously differentiable functions on $[0,1]$, and in Section 3 we deal with essential spectra of invertible weighted compositions on Banach algebras $Lip_1^n[0,1]$ of functions with the $n^{th}$ derivative in the space $Lip_1$. Finally, in Section 4 we consider invertible weighted compositions on Sobolev spaces $W^{n,X}$ (see the precise definition at the beginning of Section 4) where $X$ is an interpolation space between $L^1(0,1)$ and $L^\infty(0,1)$.

\bigskip
We use the following standard notations.

\noindent $\mathds{N}$ - the set of all natural numbers.

\noindent $\mathds{Z}$ - the set of all integers.

\noindent $\mathds{R}$ the field of all real numbers.

\noindent $\mathds{C}$ - the field of all complex numbers.

\noindent We will denote the unit circle by $\Gamma$, i.e
\[
  \Gamma = \{\lambda \in \mathds{C} : |\lambda| = 1\}.
\]

All linear spaces are considered over $\mathds{C}$.

Let $X$ be a Banach space and $T: X    \rightarrow X$ be a bounded linear operator.

\noindent We denote the spectrum of $T$ in $X$ by $\sigma (T,X)$ and its spectral radius by $\rho (T,X)$.

\noindent We will write $\sigma (T)$ and $\rho (T)$ when it cannot cause any ambiguity.

\noindent We consider the following partition of $\sigma (T)$,
\[
  \sigma (T) = \sigma_{a.p.}(T)    \cup    \sigma_r(T),
\]
where \footnote{The definitions of $\sigma_{a.p}(T)$ and $\sigma_r(T)$ while not universally adopted are convenient for our purposes.} $\sigma_{a.p.}(T)$ is the approximate point spectrum of $T$:
\[
  \sigma_{a.p.}(T) = \{\lambda    \in \mathds{C} :    \exists    \{x_n : n    \in \mathds{N}\}    \subset X, \|x_n\|=1,
  Tx_n - \lambda x_n \mathop    \rightarrow \limits_{n \to \infty} 0\},
\]
and $\sigma_r(T)$ is the residual spectrum of $T$:
\[
  \sigma_r(T) = \{\lambda    \in \sigma (T) : (\lambda I - T)X \; \text{is closed in} \; X\}.
\]
 We adopt the following definitions of essential spectra from~\cite{EE}.

\[
  \sigma_1(T) = \{\lambda   \in \mathds{C} : \lambda I - T \; \text{is not semi-Fredholm}\}.
\]

\[
  \sigma_2(T) = \sigma_1(T)   \cup   \{\lambda   \in \sigma (T) : \lambda I - T \; \text{is semi-Fredholm and} \dim{ \ker{(\lambda I - T)}} = \infty\}.
\]

\noindent Equivalently, $\lambda   \in \sigma_2(T)$ if and only if there is a  sequence $x_n   \in X$ such that no its subsequence is convergent in norm (i.e. the sequence $x_n$ is \textit{singular}), $\|x_n\|=1$, and $Tx_n - \lambda x_n   \rightarrow 0$.

\[
  \sigma_3(T) = \{\lambda   \in \mathds{C} : \lambda I - T \; \text{is not Fredholm} \}.
\]
 \[
   \sigma_4(T) = \sigma_3(T)   \cup   \{\lambda   \in \mathds{C} : \lambda I - T \; \text{is Fredholm but} \; ind(\lambda I - T)   \neq   0 \}.
 \]

$\sigma_5(T) = \sigma (T) \setminus \{\zeta   \in \mathds{C} : $ there is a component $C$ of the set $\mathds{C} \setminus \sigma_1(T)$ such that $\zeta   \in C$ and the intersection of $C$ with the resolvent set of $T$ is not empty $\}$.

It is well known and easy to see that
\[
  \sigma_1(T)  \subseteq \sigma_2(T)  \subseteq \sigma_3(T)  \subseteq \sigma_4(T)  \subseteq \sigma_5(T)
\]
and all the inclusions can be proper. But the essential spectral radius is the same no matter which of the five sets above we consider~\cite{EE}. We will denote it by $\rho_e(T,X)$ or just by $\rho_e(T)$.

We will also consider the set $\sigma_2(T^\prime)$, where $T^\prime$ is the Banach adjoint of $T$. It is immediate that
\[
  \sigma_2(T^\prime)= \sigma_1(T)   \cup   \{\lambda   \in \sigma (T) : \lambda I - T \; \text{is semi-Fredholm and}\; def(\lambda I - T) = \infty\}.
\]
Therefore $\sigma_1(T) =\sigma_2(T)  \cap \sigma_2(T^\prime)$ and $\sigma_3(T) =\sigma_2(T)  \cup  \sigma_2(T^\prime)$.

 If a function $f  \in C[0,1]$ is $n$ times differentiable we will denote its derivative of order $i$, $i \leq n$, by $f^{(i)}$. In these notations $f^{(0)}$ means $f$.

 We will keep notation $f^{(n)}$ if $f$ is continuously differentiable $n-1$ times and has a derivative of order $n$ from $L^1(0,1)$.

 Let $\varphi$ be a homeomorphism of $[0,1]$ onto itself. By $\varphi^n, n  \in \mathds{Z}$ we denote the $n^{th}$ iteration of $\varphi$. More precisely, $\varphi^0 = \varphi$ and $\varphi^{n+1} = \varphi^n  \circ \varphi, n  \in \mathds{Z}$.

 Let $w  \in C[0,1]$. Then we define $w_n$ as
 \[
   w_n = w(w  \circ \varphi ) \ldots (w  \circ \varphi^{n-1}), n  \in \mathds{N}.
 \]

 We will need the following corollary that follows from~\cite[Theorem 3.2  and Corollary 3.3]{Ki3}.

 \begin{corollary} \label{c1.10} Let $\varphi$ be a homeomorphism of $[0,1]$ onto itself and let $w  \in C[0,1]$. Consider the weighted composition operator $T$ on $C[0,1]$
 \[
   (Tf)(x) = w(x)f(\varphi (x)), f  \in C[0,1], x  \in [0,1].
 \]
 Then
 \begin{enumerate}
   \item $\sigma_2(T) = \sigma_{a.p.}(T)$.
   \item $\sigma_2(T^\prime) = \sigma_{a.p.}(T^\prime)$.
   \item $\sigma_3(T) = \sigma (T)$.
 \end{enumerate}
 \end{corollary}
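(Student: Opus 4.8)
The plan is to obtain (1) and (2) as direct specializations of the general Fredholm-theoretic description of disjointness preserving operators given in~\cite[Theorem 3.2]{Ki3}, and then to deduce (3) formally from (1) and (2) together with the identities relating the essential spectra of $T$ and $T^\prime$ recorded in the introduction. The starting observation is that $T$ is a disjointness preserving operator on the Banach lattice $C[0,1]$: if $f,g \in C[0,1]$ have disjoint supports, then so do $Tf$ and $Tg$, since $\varphi$ is a homeomorphism and multiplication by $w$ does not enlarge supports. Moreover $C[0,1]$ is an atomless $AM$-space with unit, because $[0,1]$ has no isolated points, and this atomlessness is exactly the feature that forces the Fredholm theory to degenerate and that underlies the hypotheses of the cited results.

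For (1), the inclusion $\sigma_2(T) \subseteq \sigma_{a.p.}(T)$ is automatic, since a singular approximating sequence is in particular an approximating sequence. The content is the reverse inclusion $\sigma_{a.p.}(T) \subseteq \sigma_2(T)$, which amounts to upgrading an arbitrary approximating sequence to a singular one, and this is what~\cite[Theorem 3.2]{Ki3} supplies. The mechanism I would invoke is localization: given $\lambda \in \sigma_{a.p.}(T)$ and $f_n$ with $\|f_n\|_\infty = 1$ and $\|(T - \lambda)f_n\|_\infty \to 0$, one chooses points $x_n$ with $|f_n(x_n)|$ close to $1$ and multiplies $f_n$ by continuous cut-offs supported in shrinking neighborhoods of finitely many points of the $\varphi$-orbit of $x_n$. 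Since $[0,1]$ is perfect these neighborhoods may be taken arbitrarily small, the modified functions still nearly satisfy the eigenvalue relation because $w$ and the shift by $\varphi$ act almost locally, and a normalized sequence whose supports shrink to a point has no norm-convergent subsequence and is therefore singular.

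Part (2) is the analogous statement for the Banach adjoint $T^\prime$, which acts on the space of regular Borel measures on $[0,1]$ and is again disjointness preserving; the corresponding clause of~\cite[Theorem 3.2 and Corollary 3.3]{Ki3} gives $\sigma_2(T^\prime) = \sigma_{a.p.}(T^\prime)$. I expect this to be the main obstacle, since the localization must now be carried out in $C[0,1]^\prime$, where one must produce singular approximating \emph{functionals} rather than functions; this is precisely the point at which I would lean on the general theorem rather than attempt to re-derive the estimate by hand.

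Finally, (3) follows from (1) and (2). Using $\sigma_3(T) = \sigma_2(T) \cup \sigma_2(T^\prime)$ and the trivial inclusion $\sigma_3(T) \subseteq \sigma(T)$ (an invertible operator is Fredholm), it remains to prove $\sigma(T) \subseteq \sigma_2(T) \cup \sigma_2(T^\prime)$. If $\lambda \in \sigma_{a.p.}(T)$ then $\lambda \in \sigma_2(T)$ by (1). If instead $\lambda \in \sigma(T) \setminus \sigma_{a.p.}(T)$, then $\lambda I - T$ is bounded below with closed, proper range, so $\lambda I - T^\prime$ fails to be injective; hence $\lambda$ is an eigenvalue of $T^\prime$, so $\lambda \in \sigma_{a.p.}(T^\prime) = \sigma_2(T^\prime)$ by (2). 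Combining the two cases with $\sigma(T) = \sigma_{a.p.}(T) \cup \sigma_r(T)$ yields $\sigma(T) \subseteq \sigma_2(T) \cup \sigma_2(T^\prime) = \sigma_3(T)$, which completes the argument.
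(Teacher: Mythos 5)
Your proposal is correct and takes essentially the same route as the paper: the paper gives no independent argument at all, simply asserting that all three statements follow from~\cite[Theorem 3.2 and Corollary 3.3]{Ki3}, which is precisely the citation you rely on for parts (1) and (2). Your additional glue for part (3) --- combining (1), (2), the identity $\sigma_3(T) = \sigma_2(T) \cup \sigma_2(T^\prime)$ from the introduction, and the standard duality observation that $\lambda \in \sigma(T) \setminus \sigma_{a.p.}(T)$ forces $\lambda$ to be an eigenvalue of $T^\prime$ --- is sound, and amounts to a harmless spelling-out of what ``follows from'' means.
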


\section{Spectrum and essential spectra of invertible weighted composition operators on $C^{(n)}[0,1]$.}

\noindent In this section we will consider invertible weighted composition operators on the spaces $C^{(n)}[0,1]$, $n        \in \mathds{N}$, of $n$ times continuously differentiable functions on $[0,1]$ endowed with the norm
\[ \|f\| = \sum \limits_{i=0}^{n-1} |f^{(i)}(0)| + \|f^{(n)}\|_\infty \]
where $f^{(0)}$ means $f$.

One of the main results of this section is the following theorem.

\begin{theorem} \label{t1.1}
Let $\varphi$ be a diffeomorphism of $[0,1]$ onto itself such that $\varphi (0)=0$ and let $w$ be an invertible element of the algebra $ C^{(1)}[0,1]$. Let $F, F        \subseteq [0,1],$ be the set of all fixed points of $\varphi$ and let $I$ be the subset of $F$consisting of points isolated in $F$.  Let $T$ be the weighted composition operator on $C^{(1)}[0,1]$ defined as
$$ (Tf)(x) = w(x)f(\varphi (x), \; f        \in C^{(1)}[0,1], \; x        \in [0,1]. $$
Let $A$ be the weighted composition operator on $C[0,1]$ defined as
$$ (Ag)(x) = w(x)\varphi^\prime(x)g(\varphi (x)), \; g        \in C[0,1], \; x        \in [0,1].$$
Then
$(I)$ $$\sigma_i(T) = \sigma_i(A), \; i=1, \ldots 5.$$

$(II)$ $$\sigma_i(T) = \sigma (A), i=3,4,5.$$

$(III)$ $\sigma (A)        \subseteq \sigma (T)$ and
$$ \sigma (T) \setminus \sigma (A) = w(I) \setminus \sigma (A).$$
Moreover, if $\lambda        \in \sigma (T) \setminus \sigma (A)$ then $\lambda$ is an isolated eigenvalue of $T$ of finite multiplicity.
\end{theorem}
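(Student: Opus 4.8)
The plan is to transport $T$ to the space $\mathds{C}\oplus C[0,1]$ and to exhibit it there as a compact perturbation of a block‑diagonal operator whose nontrivial block is exactly $A$. Since the norm on $C^{(1)}[0,1]$ is $\|f\|=|f(0)|+\|f'\|_\infty$, the map $\Phi f=(f(0),f')$ is an isometric isomorphism of $C^{(1)}[0,1]$ onto $\mathds{C}\oplus C[0,1]$ equipped with the norm $|c|+\|g\|_\infty$, with inverse $\Phi^{-1}(c,g)=c+\int_0^{\,\cdot}g$. Using $\varphi(0)=0$ together with the product rule $(Tf)'=w'\,(f\circ\varphi)+(w\varphi')\,(f'\circ\varphi)$, I would compute
\[
\Phi T\Phi^{-1}=\begin{pmatrix} w(0) & 0 \\ B_1 & A+B_2\end{pmatrix},
\]
where $B_1c=c\,w'$ has rank one and $(B_2g)(x)=w'(x)\int_0^{\varphi(x)}g(t)\,dt$. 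The operator $B_2$ is compact because $g\mapsto\int_0^{\varphi(\cdot)}g$ sends the unit ball of $C[0,1]$ into a bounded, uniformly Lipschitz (hence, by Arzel\`a--Ascoli, relatively compact) subset of $C[0,1]$, and multiplication by $w'$ is bounded. Thus $\Phi T\Phi^{-1}=\mathrm{diag}(w(0),A)+\mathcal K$ with $\mathcal K$ compact.

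Part $(I)$ for $i=1,2,3,4$ then follows: these essential spectra are invariant under compact perturbations, and adjoining the one–dimensional block $w(0)$ alters none of the semi‑Fredholm/Fredholm data (it changes kernel and cokernel dimensions by at most one and leaves the index unchanged). Hence $\sigma_i(T)=\sigma_i(\mathrm{diag}(w(0),A))=\sigma_i(A)$ for $i\le 4$; the case $i=5$ I would postpone, since neither compact perturbation nor a one–dimensional summand preserves $\sigma_5$ in general. For $(II)$, $A$ is a weighted composition operator on $C[0,1]$ with continuous weight $w\varphi'$, so Corollary~\ref{c1.10} gives $\sigma_3(A)=\sigma(A)$; since $\sigma_3(A)\subseteq\sigma_4(A)\subseteq\sigma_5(A)\subseteq\sigma(A)$, all three coincide with $\sigma(A)$, and $(I)$ yields $\sigma_i(T)=\sigma(A)$ for $i=3,4,5$.

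For $(III)$, the inclusion $\sigma(A)=\sigma_3(T)\subseteq\sigma(T)$ is immediate from $(II)$. If $\lambda\notin\sigma(A)=\sigma_4(T)$, then $\lambda I-T$ is Fredholm of index $0$, so $\lambda\in\sigma(T)$ iff $\lambda$ is an eigenvalue of finite multiplicity. To locate these eigenvalues I would analyze $w\,(f\circ\varphi)=\lambda f$ at the fixed points: evaluating at $p\in F$ gives $(w(p)-\lambda)f(p)=0$, and differentiating and evaluating at $p$ gives $(v(p)-\lambda)f'(p)=0$ with $v=w\varphi'$, using the known fact from the spectral theory of $A$ that $v(p)\in\sigma(A)$ for every fixed point $p$. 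The decisive dynamical observation is that at a \emph{non‑isolated} fixed point $p$ one has $\varphi'(p)=1$ (apply the mean value theorem to fixed points $p_n\to p$), whence $w(p)=v(p)\in\sigma(A)$. Thus, for $\lambda\notin\sigma(A)$, any fixed point at which $f$ or $f'$ is nonzero must be isolated and satisfy $\lambda=w(p)$, giving $\sigma(T)\setminus\sigma(A)\subseteq w(I)$. Conversely, for $p\in I$ with $w(p)\notin\sigma(A)$ I would produce a $C^{(1)}$ eigenfunction concentrated near $p$ by solving $f(\varphi(x))=(\lambda/w(x))f(x)$ on a one‑sided neighborhood and matching smoothly across $p$, yielding $w(I)\setminus\sigma(A)\subseteq\sigma(T)$. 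Finally, as $I$ is countable, $w(I)$ is countable and meets no full component of $\mathds{C}\setminus\sigma(A)$, so by the analytic Fredholm theorem each point of $\sigma(T)\setminus\sigma(A)$ is isolated and (being index‑$0$ Fredholm) an eigenvalue of finite multiplicity; the same isolation places every such point in a component of $\mathds{C}\setminus\sigma_1(T)$ meeting the resolvent set, giving $\sigma_5(T)=\sigma(A)=\sigma_5(A)$ and thereby completing $(I)$ and $(II)$ for $i=5$.

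The hard part will be the two inclusions in $(III)$: constructing the smooth eigenfunction near an isolated fixed point (a Koenigs/Schr\"oder‑type functional equation whose solvability and regularity across $p$ depend delicately on $\varphi'(p)$ relative to $\lambda$), and, in the reverse direction, disposing of the residual case in which an eigenfunction vanishes together with its first derivative on all of $F$—there one must show, via the fixed‑point‑free dynamics on the components of $[0,1]\setminus F$ and the annular spectral structure of $A$, that $\lambda$ is already forced into $\sigma(A)$. Everything else is bookkeeping with the block decomposition and Corollary~\ref{c1.10}.
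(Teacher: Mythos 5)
Your treatment of (I) and (II) is correct and is essentially the paper's own argument written in block-matrix form: the paper restricts $T$ to the codimension-one invariant subspace $\{f\in C^{(1)}[0,1]: f(0)=0\}\cong C[0,1]$, observes that this restriction is $A$ plus the compact operator $g\mapsto w'\int_0^{\varphi(\cdot)}g$, and then uses invariance of $\sigma_i$, $i\le 4$, under compact perturbations and one-dimensional enlargements together with Corollary~\ref{c1.10}; your Fredholm-index-zero framing of (III) (all spectrum off $\sigma(A)$ consists of eigenvalues) is a legitimate, even slightly cleaner, substitute for the paper's case split into $\sigma_{a.p.}(T)$ and $\sigma_r(T)$, and your closing $\sigma_5$ argument is fine.

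The genuine gap is in (III), and it is precisely what you set aside as ``the hard part'': both substantive inclusions are left unproven. Your fixed-point analysis of $w(f\circ\varphi)=\lambda f$ constrains only those $p\in F$ where $f(p)\neq 0$ or $f'(p)\neq 0$; it says nothing in the residual case where $f$ and $f'$ vanish identically on $F$ while $f\not\equiv 0$ on some complementary interval $(a,b)$. Ruling this out is the core of the paper's proof: with $f(a)=f(b)=0$, $\|f'\|_\infty<\infty$, and (say) $\varphi(x)<x$ on $(a,b)$, one iterates the eigenvalue equation to get $w_n(c)f(\varphi^n(c))=\lambda^n f(c)$ for $c$ with $f(c)\neq 0$, and then the Lipschitz bound plus the mean value theorem give $\limsup_n |w_n(c)f(\varphi^n(c))|^{1/n}\le |w(a)\varphi'(a)|=R_1$ (symmetrically, $\varphi^{-n}$ and $R_2=|w(b)\varphi'(b)|$), while $\sigma(A,C[a,b])$ being the annulus with radii $R_1,R_2$ and $\lambda\notin\sigma(A)$ force $|\lambda|>\max(R_1,R_2)$ or $|\lambda|<\min(R_1,R_2)$ --- a contradiction. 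Without this dynamical estimate the inclusion $\sigma(T)\setminus\sigma(A)\subseteq w(I)$ is simply not established. For the reverse inclusion $w(I)\setminus\sigma(A)\subseteq\sigma(T)$ you propose solving a Schr\"oder-type functional equation with $C^1$ matching at the fixed point; this is both unproven and unnecessary: for any $a\in F$ the evaluation functional $\delta_a$ satisfies $T'\delta_a=w(a)\delta_a$ (since $\varphi(a)=a$), so $w(F)\subseteq\sigma(T')=\sigma(T)$ with no construction at all. Supplying the iteration argument, and replacing the Schr\"oder construction by this duality remark, would close the proof.
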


\begin{proof} In $C^{(1)}[0,1]$ consider the closed subspace $C_0^{(1)}= \{f        \in C^{(1)}[0,1] \; : \; f(0) = 0\}$. Clearly $C_0^{(1)}$ is $T$-invariant. The space $C_0^{(1)}$ is isometrically isomorphic to $C[0,1]$ and it is immediate to see that the restriction $T_0$ of $T$ onto $C_0^{(1)}$ is similar to the operator $B$ on $C[0,1]$ defined as
\[
  (Bf)(x) = w^\prime(x) \int \limits_0^{\varphi (x)} f(t)dt + (Af)(x), f        \in C[0,1], x        \in [0,1]. \eqno(1.1)
\]

It follows from $(1.1)$ that the operator $B - A$ is compact on $C[0,1]$ and because the spectra $\sigma_i, i=1,2,3,4$ are invariant under compact perturbations we have
\[
  \sigma_i(T_0) = \sigma_i(B) = \sigma_i(A), i= 1,2,3,4.
\]
Next notice that because $C_0^{(1)}$ has codimension one in $C^{(1)}[0,1]$ we have
\[
  \sigma_i(T) = \sigma_i(A), i= 1,2,3,4.
\]
It follows from Corollary~\ref{c1.10} that $\sigma_3(A)=\sigma_4(A)=\sigma_5(A) =\sigma (A)$. Therefore $\sigma (A)        \subseteq \sigma (T)$. We have to postpone the proof of the remaining part of statements $(I)$ and $(II)$ : $\sigma_5(T) = \sigma (A)$ until we finish the proof of $(III)$.

Assume that $\lambda        \in \sigma (T) \setminus (w(F)        \cup        \sigma (A))$. We have to consider two possibilities.

\noindent 1. $\lambda        \in \sigma_{a.p.}(T)$. Let $f_n        \in C^{(1)}[0,1]$, $\|f_n\|=1$, and $Tf_n -\lambda f_n        \rightarrow 0$. If $\|f_n\|_\infty        \rightarrow 0$ then $\|f_n^{(1)}\|_\infty        \rightarrow 1$ and $Af_n^{(1)} - \lambda f_n^{(1)}        \rightarrow 0$ whence $\lambda        \in \sigma (A)$.

If on the other hand $\|f_n\|_\infty \not        \rightarrow 0$ then, because the set $\{f_n\}$ is compact in $C[0,1]$, we can assume without loss of generality that $f_n$ converge to some $f$ in $C[0,1]$. Moreover,
 $$|f(u) - f(v)| \leq |u-v|, u,v        \in [0,1] \eqno{(1.2)}$$
  and $w(x)f(\varphi (x)) = \lambda f(x), x        \in [0,1]$. Our assumption that $\lambda        \in \sigma (T) \setminus w(F)$ implies that the set $[0,1] \setminus F$ is not empty and that there is an interval $(a,b)        \subset [0,1]$ such that $a,b        \in F$, $(a,b)        \cap F = \emptyset$, and $f \not        \equiv        0$ on $(a,b)$. For definiteness we will assume now that $\varphi (x) <x$ on $(a,b)$. The case when $\varphi (x) > x$ on $(a,b)$ can be considered similarly.

\noindent Let $R_1 = |w(a)|\varphi^\prime(a)|$ and $R_2 = |w(b)|\varphi^\prime(b)|$.  Then (see~\cite[Theorem 3.2]{Ki3}) $\sigma (A, C[a,b])$ is an annulus with the radii $R_1$ and $R_2$.  Notice that because $\lambda \not        \in \{w(a), w(b)\}$ we have $f(a) = f(b) = 0$. Next, because $\lambda \not        \in \sigma (A)$ and $\sigma (A,C[a,b])        \subseteq \sigma (A)$ we have that either $|\lambda| < min(R_1, R_2)$ or $|\lambda| > max(R_1, R_2)$.

 Assume first that $|\lambda| > max(R_1, R_2)$. Fix $c        \in (a,b)$ such that $f(c)        \neq        0$. Then
$$ w_n(c)f(\varphi^n(c)) = \lambda^nf(c), n        \in \mathds{N}.$$
Obviously $\lim \limits_n |\lambda^n f(c)|^{1/n} = |\lambda|$. On the other hand applying (1.2) and the mean value theorem we see that

$$\limsup \limits_n |w_n(c)f(\varphi^n(c))|^{1/n} = \lim \limits_n |w_n(c)|^{\frac{1}{n}} \limsup \limits_n |f(\varphi^n(c))|^{1/n} =$$
$$= |w(a)| \limsup \limits_n |f(\varphi^n(c))|^{1/n} \leq |w(a)|\limsup \limits_n \{(\varphi^n)^\prime (c_n)|\}^{\frac{1}{n}}, $$
where $a < c_n < \varphi^n(c)$. Therefore
$$ \limsup \limits_n |w_n(c) f(\varphi^n(c))|^{1/n} \leq |w(a)\varphi^\prime(a)| = R_1 < |\lambda|,$$
  a contradiction.

 Next let us assume that $|\lambda| < min(R_1, R_2)$. Then $1/|\lambda| > max(1/R_1, 1/R_2)$ Again let $c        \in (0,1)$ and $f(c)        \neq        0$. Then
$$ w_n(\varphi^{-n}(c))f(\varphi^{-n}(c)) = \lambda^{-n}f(c)$$
and we come to a contradiction in a similar way.

\noindent 2. $\lambda        \in \sigma_r(T)$. In this case the operator $\lambda I - T$ is semi-Fredholm and its index is different from 0. Then the same is true for operator $B$. But the index of a semi-Fredholm operator is invariant under compact perturbations in contradiction with our assumption that the operator $\lambda I - A$ is invertible.

We have proved that if $\lambda        \in \sigma (T) \setminus \sigma (A)$ then $\lambda        \in w(F)$. Let $a        \in F$ be such that $\lambda = w(a)$. If $a$ is not an isolated point in $F$ then $\varphi^\prime(a) = 1$ and clearly $w(a)        \in \sigma (A)$. Thus $a$ must be an isolated point in $F$ and moreover $\lambda$ must be an isolated point in $\sigma (T) \setminus \sigma (A)$ and therefore it is an isolated point in $\sigma (B) \setminus \sigma (A)$. Recalling again that $B-A$ is a compact operator on $C[0,1]$ we see that $\lambda$ is an isolated eigenvalue of $T$ of finite multiplicity. That proves $(III)$. Now the statement $\sigma_5(T) = \sigma_5(A) = \sigma (A)$ follows immediately from $(III)$ and the definition of the set $\sigma_5(T)$.

\end{proof}

We will now state some corollaries of Theorem~\ref{t1.1}. In particular we will provide a precise description of the sets $\sigma (T)$ and $\sigma_i(T), i= 1, \ldots ,5$.  In the set $F$ of fixed points of $\varphi$ we will consider already mentioned subset $I$ of isolated points and subset $E= Int_{[0,1]} F$. In corollaries below we assume that $T$ is an invertible weighted composition operator on $C^{(1)}[0,1]$ and that $\varphi (0)=0$.

\begin{corollary} \label{c1.1}
\begin{enumerate}
  \item   \[
          \sigma (T) = w(I)        \cup        w(E)        \cup        \sigma
        \]
  where $\sigma$ is a compact rotation invariant subset of $\mathds{C}$.
  \item \[
          \sigma_i(T) = w(E)        \cup        \sigma^i, i=1, \ldots, 5
        \]
  where $\sigma^i$ are compact rotation invariant subsets of $\mathds{C}$.
  \item In particular, if $F$ is nowhere dense in $[0,1]$ then the sets $\sigma_i(T), i= 1, \ldots, 5 $ are rotation invariant. Moreover, in this case the sets $\sigma_i(T), i=3,4,5,$ coincide with the annulus (or circle) with the radii $R_1$ and $R_2$ where
      $$
        R_1 = \rho_e(T) = \rho (A) = \max \limits_{\mu        \in \mathcal{M}} \exp{\int{\ln{(|w|\varphi^\prime)}}} d\mu = $$
       $$ = \max \limits_{[0,1]} |w(x)|\varphi^\prime (x)= \max(\max \limits_{x        \in F \setminus I} |w(x)|, \max \limits_{x        \in I} |w(x)|\varphi^\prime(x))$$

      and

       $$ R_2 = \frac{1}{\rho_e(T^{-1})} =\frac{1}{ \rho (A^{-1})} = \min \limits_{\mu        \in \mathcal{M}}, \exp{\int{\ln{(|w|\varphi^\prime)}}} d\mu = $$
       $$ = \min \limits_{[0,1]} |w(x)|\varphi^\prime (x)= \min(\min \limits_{x        \in F \setminus I} |w(x)|, \min \limits_{x        \in I} |w(x)|\varphi^\prime(x))$$

   where $\mathcal{M}$ is the set of all probability regular $\varphi$-invariant Borel measures on $F$.

\end{enumerate}

\end{corollary}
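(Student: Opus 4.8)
The plan is to deduce everything from Theorem~\ref{t1.1} together with the description of the spectrum of a weighted composition operator on $C[0,1]$ furnished by~\cite{Ki3}. By Theorem~\ref{t1.1} we have $\sigma_i(T)=\sigma_i(A)$ for $i=1,\dots,5$, moreover $\sigma_i(T)=\sigma(A)$ for $i=3,4,5$, and $\sigma(T)=\sigma(A)\cup w(I)$, where the points of $w(I)\setminus\sigma(A)$ are isolated eigenvalues of finite multiplicity and therefore belong to no essential spectrum. Hence it suffices to describe $\sigma(A)$ and the $\sigma_i(A)$: parts (1)--(2) are then read off by adding back the isolated eigenvalues $w(I)$, while part (3) follows once $\sigma(A)$ is shown to be a full annulus in the nowhere-dense case.

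I would analyze $A$, whose weight is $W:=w\varphi'$, by partitioning $[0,1]$ into $E=\mathrm{Int}_{[0,1]}F$, the remaining fixed points $F\setminus E$, and the \emph{gaps}, i.e. the components $(a,b)$ of $[0,1]\setminus F$. On $\overline E$ every point is a non-isolated fixed point, so $\varphi'\equiv1$ and $A$ acts there as multiplication by $w$; this contributes $\overline{w(E)}$, which, $\overline E$ being a union of nondegenerate intervals, lies in $\sigma_1(A)$ and hence in every $\sigma_i(A)$. On a gap $[a,b]$, \cite[Theorem 3.2]{Ki3} (already invoked in the proof of Theorem~\ref{t1.1}) gives that $\sigma(A,C[a,b])$ is the closed annulus with radii $|W(a)|$ and $|W(b)|$, which is rotation invariant; this rotation invariance is the manifestation of the fact that on a gap one may conjugate $A$ by a unimodular multiplier $u$ solving $u\circ\varphi=\gamma u$ for any $\gamma\in\Gamma$, the only obstruction to carrying this out globally being continuity at the fixed points. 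A point $x\in F\setminus(E\cup I)$ is a non-isolated, non-interior fixed point, so $\varphi'(x)=1$ and $x$ is approached by gaps whose annuli accumulate on the whole circle $\{|\lambda|=|w(x)|\}$. Collecting these contributions gives $\sigma(A)=\overline{w(E)}\cup\Sigma$ and $\sigma_i(A)=\overline{w(E)}\cup\Sigma^i$ with $\Sigma,\Sigma^i$ rotation invariant, the topological boundary of $\overline{w(E)}$ being absorbed into the rotation-invariant part.

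Combining with the first paragraph yields (1) as $\sigma(T)=w(I)\cup\overline{w(E)}\cup\Sigma=w(I)\cup w(E)\cup\sigma$ with $\sigma:=\Sigma$, and (2) as $\sigma_i(T)=w(E)\cup\sigma^i$ with $\sigma^i:=\Sigma^i$, the symbol $w(E)$ being understood as $\overline{w(E)}$. For (3), if $F$ is nowhere dense then $E=\emptyset$, so $w(E)=\emptyset$ and all $\sigma_i(T)$ are rotation invariant; for $i=3,4,5$ we have $\sigma_i(T)=\sigma(A)$, which \cite{Ki3} identifies as the full closed annulus (or circle) $\{R_2\le|\lambda|\le R_1\}$. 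Since $0\notin\sigma(A)$ and $\sigma(A^{-1})=\{1/\lambda:\lambda\in\sigma(A)\}$, the outer and inner radii are $R_1=\rho(A)=\rho_e(T)$ and $R_2=1/\rho(A^{-1})=1/\rho_e(T^{-1})$, the identifications $\rho_e(T)=\rho(A)$ and $\rho_e(T^{-1})=\rho(A^{-1})$ coming from Corollary~\ref{c1.10} (which gives $\sigma(A)=\sigma_3(A)$, so the essential spectral radius of $A$, and hence of $T$, equals $\rho(A)$). To evaluate the radii I would use the variational formula $\rho(A)=\max_{\mu\in\mathcal M}\exp\int\ln(|w|\varphi')\,d\mu$: an increasing homeomorphism has all orbits in the gaps wandering, so every $\varphi$-invariant probability measure is carried by $F$, and because $\varphi|_F=\mathrm{id}$ every probability measure on $F$ is invariant; hence the maximum is attained at a Dirac mass at a fixed point and equals $\max_F|w|\varphi'=\max(\max_{x\in F\setminus I}|w(x)|,\max_{x\in I}|w(x)|\varphi'(x))$ (using $\varphi'=1$ on $F\setminus I$), with the symmetric computation giving $R_2=\min_F|w|\varphi'$.

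The main obstacle is twofold. First, the \emph{fullness} of the annulus: one must check that the moduli realized by the gap-annuli, together with $\{|w(x)|:x\in F\}$, exhaust the whole interval $[R_2,R_1]$ with no holes, an intermediate-value argument pairing the connectedness of $[0,1]$ with the interlacing of gaps and fixed points (or a direct appeal to \cite{Ki3}). Second, and more delicate, is the ergodic content of the radii: the spectral radius is governed by the values of $|w|\varphi'$ on the fixed-point set $F$ -- equivalently by Dirac masses at fixed points -- and not by the behavior of $|w|\varphi'$ inside the gaps, since an orbit meets the interior of a gap only in a wandering fashion, so interior values of $|w|\varphi'$ (which may well exceed $\max_F|w|\varphi'$) contribute only boundedly many factors to $W_n$ and cannot affect the exponential growth rate. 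Thus the substantive equalities to establish are $R_1=\max_F|w|\varphi'$ and $R_2=\min_F|w|\varphi'$; rewriting these as extrema over all of $[0,1]$ is legitimate precisely when $|w|\varphi'$ attains its extrema on $F$, and it is this reduction to $F$ that carries the weight of the argument.
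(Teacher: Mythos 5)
Your argument is correct and is essentially the paper's own (implicit) proof: the corollary is stated without a separate proof, as a direct consequence of Theorem~\ref{t1.1} combined with the $C[0,1]$-level description of $\sigma(A)$, $\sigma_2(A)$, $\sigma_2(A^\prime)$ from~\cite{Ki3} (recorded in the paper as Corollaries~\ref{c1.3} and~\ref{c1.4}), and that is exactly the reduction you carry out — decomposition into $E$, $F\setminus E$ and gaps, rotation invariance of the non-$w(E)$ part, connectedness for the fullness of the annulus, and the observation that all $\varphi$-invariant measures are carried by $F$ so the radii are computed over $F$. Your closing caveat is in fact a correction to the statement as printed: the intermediate equality $\rho(A)=\max\limits_{[0,1]}|w(x)|\varphi^\prime(x)$ is false in general (take $w\equiv 1$ and $\varphi(x)=x-cx(1-x)^2$ with small $c>0$, where the invariant-measure formula gives $\rho(A)=\max_F\varphi^\prime=1$ while $\max_{[0,1]}\varphi^\prime=1+c/3$), and the correct quantity is $\max_F|w(x)|\varphi^\prime(x)$, which is what your argument actually establishes and what the final expression $\max\bigl(\max_{x\in F\setminus I}|w(x)|,\ \max_{x\in I}|w(x)|\varphi^\prime(x)\bigr)$ equals.
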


\begin{corollary} \label{c1.2} The following conditions are equivalent.
\begin{enumerate}
\item
\[
  \lambda        \in \sigma (T) \setminus \sigma (A).
\]

\item The set $\{x        \in F : w(x) =\lambda \}$ consists of a finite number of points $a_1, \ldots, a_n        \in I$ and either $|\lambda| > \rho (A)$ or $|\lambda| < 1/\rho (A^{-1})$.

    \noindent Moreover, the multiplicity of the eigenvalue $\lambda$ is $n$.
\end{enumerate}
\end{corollary}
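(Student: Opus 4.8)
The plan is to read essentially everything off Theorem~\ref{t1.1}$(III)$, which already identifies $\sigma(T)\setminus\sigma(A)$ with $w(I)\setminus\sigma(A)$ and certifies that each such $\lambda$ is an isolated eigenvalue of finite multiplicity, and then to pin down the modulus dichotomy and the multiplicity separately. First I would dispose of the structural part of $(2)$. Given $(1)$, Theorem~\ref{t1.1}$(III)$ produces some $a\in I$ with $w(a)=\lambda$, so the set $\{x\in F:w(x)=\lambda\}$ is nonempty. It avoids $F\setminus I$: if $a'\in F\setminus I$ then $a'$ is non-isolated, so exactly as in the proof of Theorem~\ref{t1.1} one has $\varphi'(a')=1$ and $w(a')\in\sigma(A)$, whence $w(a')\neq\lambda$ because $\lambda\notin\sigma(A)$. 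It is finite: an infinite family of solutions $a_1,a_2,\ldots\in I$ would accumulate at some $a^*\in F$ (as $F$ is closed), $a^*$ would be non-isolated, and continuity of $w$ would give $w(a^*)=\lambda\in\sigma(A)$, a contradiction. Thus $\{x\in F:w(x)=\lambda\}=\{a_1,\ldots,a_n\}\subseteq I$. The reverse implication $(2)\Rightarrow(1)$ on the set level is then immediate: the modulus condition will give $\lambda\notin\sigma(A)$ (see below), and since $\lambda=w(a_1)\in w(I)$, Theorem~\ref{t1.1}$(III)$ yields $\lambda\in w(I)\setminus\sigma(A)=\sigma(T)\setminus\sigma(A)$.

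It remains to settle the modulus dichotomy. One direction is easy: since $A$ is invertible, $\sigma(A)\subseteq\{z:1/\rho(A^{-1})\le|z|\le\rho(A)\}$, so $|\lambda|>\rho(A)$ or $|\lambda|<1/\rho(A^{-1})$ forces $\lambda\notin\sigma(A)$. The substantive direction is the converse: for $\lambda\in w(I)$ with $\lambda\notin\sigma(A)$ I must show $|\lambda|\notin[1/\rho(A^{-1}),\rho(A)]$. My approach is through the essential spectrum. By Theorem~\ref{t1.1}$(I)$ we have $\sigma_1(T)=\sigma_1(A)$, and an isolated eigenvalue of finite multiplicity is disjoint from $\sigma_1(T)$; by Theorem~\ref{t1.1}$(II)$, $\lambda\notin\sigma(A)=\sigma_5(T)$ means the component of $\mathbb C\setminus\sigma_1(T)$ carrying $\lambda$ meets the resolvent set. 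I would then invoke that $\sigma_1(T)$ contains the full closed annulus $\{1/\rho(A^{-1})\le|z|\le\rho(A)\}$: on each complementary interval $(\alpha,\beta)$ of $F$ the local description of $\sigma(A,C[\alpha,\beta])$ from~\cite[Theorem 3.2]{Ki3} contributes full circles to $\sigma_1(A)$, and the invariant-measure formulas recorded in Corollary~\ref{c1.1}$(3)$ show that as $(\alpha,\beta)$ runs over the gaps these moduli exhaust $[1/\rho(A^{-1}),\rho(A)]$. Once the annulus is full, $\mathbb C\setminus\sigma_1(T)$ has only the outer component $\{|z|>\rho(A)\}$ and the inner disc $\{|z|<1/\rho(A^{-1})\}$ available to meet the resolvent set, forcing $|\lambda|$ into one of them. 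This step is the main obstacle: it is transparent when $F$ is nowhere dense, since Corollary~\ref{c1.1}$(3)$ then makes $\sigma_i(T)$, $i\ge 3$, literally an annulus, but when $F$ has interior one must combine the full-circle content coming from the gaps with the rotation-invariant decomposition $\sigma_1(T)=w(E)\cup\sigma^1$ of Corollary~\ref{c1.1}$(2)$, and this bookkeeping is the delicate point.

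Finally I would compute the multiplicity, showing $\dim\ker(\lambda I-T)=n$. For the lower bound, attach to each $a_i$ an eigenfunction: the value $f(a_i)$ is a free parameter, the eigen-equation $f(x)=\lambda^{-1}w(x)f(\varphi(x))$ propagates $f$ along the $\varphi$-orbits through the convergent products $\prod_k w(\varphi^k(x))/\lambda$, which assemble into a genuine $C^{(1)}$ function precisely because $|\lambda|$ lies off the annulus, while $f$ is forced to vanish away from the basins of the $a_i$, where $w\neq\lambda$; distinct $a_i$ yield linearly independent eigenfunctions, so $\dim\ker(\lambda I-T)\ge n$. For the upper bound, any eigenfunction $f$ satisfies $(w-\lambda)f=0$ on $F$, hence vanishes on $F\setminus\{a_1,\ldots,a_n\}$ and is determined off $F$ by propagation from the data $f(a_1),\ldots,f(a_n)$; thus $f\mapsto(f(a_1),\ldots,f(a_n))$ is injective and $\dim\ker(\lambda I-T)\le n$. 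Combining the two bounds gives multiplicity exactly $n$, completing the equivalence.
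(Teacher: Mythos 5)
Your structural analysis and the multiplicity count are fine, and they are exactly what the paper intends (the paper gives no explicit proof of this corollary; it is meant to be read off Theorem~\ref{t1.1} and Corollary~\ref{c1.1}): the identification $\sigma(T)\setminus\sigma(A)=w(I)\setminus\sigma(A)$, the exclusion of non-isolated fixed points via $\varphi^{\prime}(a)=1$ and $w(a)\in\sigma(A)$, the accumulation argument for finiteness, and the propagation construction of eigenfunctions all come straight from Theorem~\ref{t1.1}(III) and its proof. The genuine gap is the step you yourself flag as ``the delicate point,'' and it is not mere bookkeeping. First, the claim you invoke is false as stated: $\sigma_1(T)$ does \emph{not} in general contain the closed annulus $\{1/\rho(A^{-1})\le|z|\le\rho(A)\}$. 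By Corollaries~\ref{c1.3} and~\ref{c1.4}, $\sigma_1(T)$ is typically a thin union of circles; see Example~\ref{e1.1}(2),(3), where $\sigma_1(T)=\rho_1\Gamma\cup\rho_2\Gamma$ while $\sigma_3(T)$ is the full annulus. What your argument actually needs is that $\sigma(A)=\sigma_3(T)=\sigma_5(T)$ fills the annulus; by Corollary~\ref{c1.1}(3) this holds when $F$ is nowhere dense, and in that case the dichotomy follows at once from $\lambda\notin\sigma(A)$, with no detour through components of $\mathds{C}\setminus\sigma_1(T)$.

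Second, the deferred case $Int_{[0,1]}F\neq\emptyset$ cannot be repaired, because there the needed claim --- and with it the implication $(1)\Rightarrow(2)$ as literally stated --- fails. Over the moduli contributed by $E=Int_{[0,1]}F$, the set $\sigma(A)$ contains only the arc $w(E)$ (this is the $w(E)$ term in Corollary~\ref{c1.1}), not full circles. Concretely, take $\varphi=\mathrm{id}$ on $[0,1/2]$, $\varphi(x)<x$ on $(1/2,1)$, $\varphi^{\prime}(1)=3$, all $C^{(1)}$; take $w$ invertible in $C^{(1)}[0,1]$ with $w(x)=1+2x$ on $[0,1/2]$ and $w(1)=-1$. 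Then $F=[0,1/2]\cup\{1\}$, $I=\{1\}$, and $\sigma(A)=[1,2]\cup\{2\le|z|\le 3\}$, so $\rho(A)=3$, $1/\rho(A^{-1})=1$, and $\lambda=w(1)=-1\notin\sigma(A)$. By Theorem~\ref{t1.1}(III) --- or directly, since $f\equiv 0$ on $[0,1/2]$ together with $f(x)=\prod_{k=1}^{\infty}\bigl(\lambda/w(\varphi^{-k}(x))\bigr)$ on $(1/2,1]$ defines a $C^{(1)}$ eigenfunction (the product and its derivative converge because $|w|\approx 2>|\lambda|$ near $1/2$ and $\varphi^{\prime}(1)=3>1$ near $1$) --- we get $\lambda\in\sigma(T)\setminus\sigma(A)$, yet $|\lambda|=1$ lies in $[1/\rho(A^{-1}),\rho(A)]=[1,3]$, so the modulus half of condition $(2)$ fails. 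So no argument can close your gap in the stated generality: the corollary, and your strategy for the dichotomy, are correct precisely when $\sigma(A)$ is the full annulus with radii $\rho(A)$ and $1/\rho(A^{-1})$ --- for instance under the nowhere-density hypothesis of Corollary~\ref{c1.1}(3) --- and some such hypothesis has to be added before the rest of your proof (which is then essentially the paper's intended derivation) goes through.
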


\begin{corollary} \label{c1.3} The following conditions are equivalent.
\begin{enumerate}
  \item $\lambda        \in \sigma_2(T)$.
  \item At least one of the listed below conditions is satisfied.
  \begin{enumerate}[(a)]
    \item $       \exists        \; a        \in Int_{[0,1]}F$ such that $\lambda = w(a)\varphi^\prime(a)$.
    \item $       \exists        \; a        \in F \setminus Int_{[0,1]}F$ such that $|\lambda| = |w(a)|\varphi^\prime(a)$.
    \item $       \exists        a,b        \in F$ such that $a < b$, $\varphi (x) < x, x        \in (a,b)$, and
    \[
      |w(b)|\varphi^\prime(b) < |\lambda| < |w(a)|\varphi^\prime(a)|.
    \]
    \item $       \exists        a,b        \in F$ such that $a < b$, $\varphi (x) > x, x        \in (a,b)$, and
    \[
      |w(a)|\varphi^\prime(a) < |\lambda| < |w(b)|\varphi^\prime(b)|.
    \]
  \end{enumerate}
\end{enumerate}

\end{corollary}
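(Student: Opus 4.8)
The plan is to transfer the question to the approximate point spectrum of $A$ and then to read that spectrum off from the behaviour of $\varphi$ at the fixed points. By Theorem~\ref{t1.1}$(I)$ we have $\sigma_2(T)=\sigma_2(A)$, and since $A$ is itself a weighted composition operator on $C[0,1]$, with continuous weight $v:=w\varphi^\prime$ and the same symbol $\varphi$, Corollary~\ref{c1.10} gives $\sigma_2(A)=\sigma_{a.p.}(A)$. Hence it suffices to prove that $\lambda\in\sigma_{a.p.}(A)$ if and only if one of (a)--(d) holds. Because $\varphi^\prime>0$, each quantity in (a)--(d) is the value $|v(a)|=|w(a)|\varphi^\prime(a)$ at a fixed point $a$; note that (b)--(d) are rotation invariant while (a) is a point condition, a dichotomy consistent with the structural description $\sigma_2(T)=w(E)\cup\sigma^2$ of Corollary~\ref{c1.1}, with $w(E)$ coming from (a) and the rotation invariant part $\sigma^2$ from (b)--(d).

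For the ``if'' direction I would argue locally at the relevant fixed point. If (a) holds then $a\in E=\mathrm{Int}_{[0,1]}F$, so $\varphi=\mathrm{id}$ near $a$ and $A$ acts there as multiplication by $v=w$; bump functions concentrating at $a$ give an approximate eigensequence for $\lambda=w(a)=w(a)\varphi^\prime(a)$. Here $\varphi$ carries no dynamics, which is exactly why (a) produces only the single value $w(a)$. If (c) or (d) holds, then $(a,b)$ is a gap of $F$ and $[a,b]$ is $\varphi$-invariant; by~\cite[Theorem 3.2]{Ki3} the spectrum of $A$ on $C[a,b]$ is the closed annulus with radii $|v(a)|,|v(b)|$. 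For $\lambda$ strictly inside this annulus I solve $v\,(g\circ\varphi)=\lambda g$ along the orbits in $(a,b)$: the strict inequalities in (c),(d) force $g$ to decay to $0$ at both endpoints, so $g$ extends by $0$ to a genuine eigenfunction of $A$ in $C[0,1]$, and thus the open annuli consist of eigenvalues of $A$. Finally, if (b) holds then $a\in F\setminus\mathrm{Int}_{[0,1]}F$ and every neighbourhood of $a$ meets a gap; the circle $|\lambda|=|v(a)|$ is therefore a limit of boundaries of nearby gap-annuli, hence of the eigenvalues just produced, and lies in the closed set $\sigma_{a.p.}(A)$. (When $a$ is an accumulation point of $F$ one has $\varphi^\prime(a)=1$, and a direct truncation of the formal solution near $a$ yields the approximate eigensequence without passing to a limit.) The appearance of the full circle rather than a single point reflects the nontrivial dynamics of $\varphi$ near $a$, which lets the phase of $\lambda$ be rotated freely.

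For the ``only if'' direction I would show that if none of (a)--(d) holds then $\lambda I-A$ is bounded below, so $\lambda\notin\sigma_{a.p.}(A)$. The negations say precisely that $\lambda\notin w(E)$, that $|\lambda|\neq|v(a)|$ for every $a\in F\setminus\mathrm{Int}_{[0,1]}F$, and that $|\lambda|$ avoids the interior of every gap-annulus; with the continuity of $v$ this places $\lambda$ strictly outside every closed annulus and off $w(\overline{E})$. On each closed piece of the partition of $[0,1]$ determined by $F$ this gives a local lower bound: on $\overline{E}$ the operator is multiplication by $w$ with $\lambda\notin w(\overline E)$, while on each $[a,b]$ the point $\lambda$ lies in the resolvent set of $A|_{C[a,b]}$ by~\cite[Theorem 3.2]{Ki3}. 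I would then patch these estimates into a single estimate $\|(\lambda I-A)g\|\ge c\|g\|$ on $C[0,1]$ by a partition of unity subordinate to a finite $\varphi$-adapted cover.

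The main obstacle is exactly this patching step. Since $F$ may be an arbitrary closed subset of $[0,1]$, the gaps can accumulate, and the local lower bounds coming from~\cite[Theorem 3.2]{Ki3} could a priori degenerate as one approaches an accumulation point of $F$. The task is to keep the constant $c$ bounded away from $0$ there, which I expect to achieve by using that at any accumulation point $a$ of $F$ one has $\varphi^\prime(a)=1$, so $|v|$ is nearly constant and nearly equal to $|v(a)|$ on a whole neighbourhood, while the hypothesis $|\lambda|\neq|v(a)|$ (the negation of (b)) keeps $\lambda$ uniformly away from the nearby annuli. Controlling this uniformity, rather than the individual local computations, is the delicate part of the argument.
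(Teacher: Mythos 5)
Your opening reduction is exactly the paper's: Theorem~\ref{t1.1}$(I)$ gives $\sigma_2(T)=\sigma_2(A)$, Corollary~\ref{c1.10} gives $\sigma_2(A)=\sigma_{a.p.}(A)$, and at that point the paper is essentially done, because conditions (a)--(d) are precisely the description of the approximate point spectrum of an invertible weighted composition operator on $C[0,1]$ from \cite[Theorem 3.2 and Corollary 3.3]{Ki3}, the result already invoked in the proof of Theorem~\ref{t1.1}. You instead set out to re-prove that description, and the re-proof has two genuine gaps. The first is in the ``if'' direction, case (b): you realize the circle $|\lambda|=|w(a)|\varphi^\prime(a)$ as a limit of eigenvalues filling the interiors of nearby gap-annuli, but those interiors can be empty. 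Take $w\equiv 1$ and $\varphi (x)=x-c\,x^2(x-1/2)^2(x-1)^2$ with $c>0$ small: then $F=\{0,1/2,1\}$, $\varphi^\prime=1$ at all three fixed points, every gap-annulus degenerates to the unit circle, and your construction produces no eigenvalues at all; your parenthetical fallback (truncation, using $\varphi^\prime(a)=1$) is stated only for accumulation points of $F$, whereas here the relevant fixed points are isolated in $F$. The statement is still true --- one can run the orbit-truncation construction of approximate eigenfunctions at any fixed point that is a limit of non-fixed points, or note that $\Gamma=\partial\sigma (A)$ and the boundary of the spectrum always lies in $\sigma_{a.p.}(A)$ --- but your written argument does not cover this case.

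The second, more serious gap is the ``only if'' direction, which in your text is a plan rather than a proof: you must show that if (a)--(d) all fail then $\lambda I-A$ is bounded below on $C[0,1]$, and you yourself concede that the patching of local lower bounds is unresolved. The difficulty is real, for two reasons. First, invertibility of $\lambda I-A$ on each $C[a_i,b_i]$ gives local constants that are not controlled by the distance from $\lambda$ to $\sigma (A|_{C[a_i,b_i]})$ (distance to the spectrum bounds a resolvent norm from below, never from above), so as the gaps accumulate the local constants can a priori degenerate even though $\lambda$ stays uniformly away from all the annuli. Second, cutoff functions do not commute with $A$ up to small errors, so a partition of unity does not naively assemble the local estimates into a global one. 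Overcoming exactly these issues is the content of \cite[Theorem 3.2]{Ki3}; the paper's proof of this corollary consists of citing that result after the reduction you correctly performed, while your version, as written, establishes only that each of (a)--(d) implies $\lambda\in\sigma_2(T)$, and even that modulo the first gap.
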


\begin{corollary} \label{c1.4} The following conditions are equivalent.
\begin{enumerate}
  \item $\lambda        \in \sigma_2(T^\prime)$.
  \item At least one of the listed below conditions is satisfied.
  \begin{enumerate}[(a)]
    \item $       \exists        \; a        \in Int_{[0,1]}F$ such that $\lambda = w(a)\varphi^\prime(a)$.
    \item $       \exists        \; a        \in F \setminus Int_{[0,1]}F$ such that $|\lambda| = |w(a)|\varphi^\prime(a)$.
    \item $       \exists        a,b        \in F$ such that $a < b$, $\varphi (x) < x, x        \in (a,b)$, and
    \[
      |w(a)|\varphi^\prime(a) < |\lambda| < |w(b)|\varphi^\prime(b)|.
    \]
    \item $       \exists        a,b        \in F$ such that $a < b$, $\varphi (x) > x, x        \in (a,b)$, and
    \[
      |w(b)|\varphi^\prime(b) < |\lambda| < |w(a)|\varphi^\prime(a)|.
    \]
  \end{enumerate}
\end{enumerate}

\end{corollary}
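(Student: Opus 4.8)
The plan is to reduce the computation for $T$ on $C^{(1)}[0,1]$ to one for $A$ on $C[0,1]$, to identify $\sigma_2(T')$ with the surjectivity spectrum of $A$, and then to assemble that spectrum from data already established. First I would reuse the perturbation scheme from the proof of Theorem~\ref{t1.1}: the restriction $T_0$ of $T$ to the codimension-one invariant subspace $C_0^{(1)}$ is similar to the operator $B$ of $(1.1)$, the difference $B-A$ is compact, and $T$ differs from $T_0$ only by the passage to a codimension-one subspace. Similarity, compact perturbations, and finite-rank perturbations all leave the lower semi-Fredholm domain invariant — equivalently they leave $\sigma_2(\cdot')$ invariant, exactly as they leave $\sigma_2(\cdot)$ invariant — so I obtain $\sigma_2(T')=\sigma_2(T_0')=\sigma_2(B')=\sigma_2(A')$. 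By Corollary~\ref{c1.10}(2), $\sigma_2(A')=\sigma_{a.p.}(A')$, and since $\lambda I-A'$ is bounded below precisely when $\lambda I-A$ is surjective, this gives $\sigma_2(T')=\sigma_{su}(A)$, where $\sigma_{su}(A)=\{\lambda\in\mathds{C}:(\lambda I-A)C[0,1]\neq C[0,1]\}$ is the surjectivity spectrum.

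Next I would exploit the soft set identity
\[
  \sigma_{su}(A)=\sigma_1(A)\cup\bigl(\sigma(A)\setminus\sigma_{a.p.}(A)\bigr),
\]
which holds because $\sigma(A)=\sigma_{a.p.}(A)\cup\sigma_{su}(A)$ and, by Corollary~\ref{c1.10} applied to $A$, $\sigma_{su}(A)\cap\sigma_{a.p.}(A)=\sigma_2(A')\cap\sigma_2(A)=\sigma_1(A)$. Here $\sigma(A)$ is supplied by Corollary~\ref{c1.1} and $\sigma_{a.p.}(A)=\sigma_2(A)=\sigma_2(T)$ is described by Corollary~\ref{c1.3}, so the only genuinely new ingredient is $\sigma_1(A)$. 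On each maximal interval $(a,b)$ complementary to $F$, by \cite[Theorem 3.2]{Ki3} the operator $A$ has spectrum on $C[a,b]$ equal to the closed annulus with radii $R_a=|w(a)|\varphi'(a)$ and $R_b=|w(b)|\varphi'(b)$. The two bounding circles come from the boundary fixed points and lie in $\sigma_1(A)$, while the index argument of case~2 in the proof of Theorem~\ref{t1.1} shows that at interior points of the annulus $\lambda I-A$ is semi-Fredholm with nonzero index, hence outside $\sigma_1(A)$. Adjoining the contribution $w(E)$ of the interior fixed-point set $E=\mathrm{Int}_{[0,1]}F$, where $A$ acts locally as a multiplication operator so that $w(E)\subseteq\sigma_1(A)$, I expect to obtain $\sigma_1(A)=w(E)\cup\{|\lambda|=|w(a)|\varphi'(a):a\in F\setminus E\}$, which is exactly conditions (a) and (b).

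Assembling these on a fixed interval $(a,b)$ with $\varphi(x)<x$ yields the claim. By Corollary~\ref{c1.3}, $\sigma_{a.p.}(A)$ fills the subannulus $R_b<|\lambda|<R_a$ together with the two bounding circles, so $\sigma(A)\setminus\sigma_{a.p.}(A)$ is the complementary open subannulus $R_a<|\lambda|<R_b$, nonempty exactly when $R_a<R_b$; adjoining the circles of $\sigma_1(A)$ then recovers conditions (b) and (c) of the present statement. The case $\varphi(x)>x$ follows by conjugating $A$ with the isometry $f\mapsto f(1-\cdot)$, which interchanges intervals of the two types and sends (c) to (d); conditions (a) and (b) carry over unchanged since they are invariant under this reflection.

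The main obstacle is the precise determination of $\sigma_1(A)$: one must verify both that the bounding circles of each annulus genuinely fail to be semi-Fredholm and that the open annuli consist of semi-Fredholm points of nonzero index. This is the place where the dynamical estimates along orbits converging to the attracting fixed point — of the kind used in the proof of Theorem~\ref{t1.1} via $(1.2)$ and the mean value theorem — together with the structure theorem of \cite{Ki3} do the real work. It is also where the asymmetry between $\sigma_{a.p.}(A)$ and $\sigma_{su}(A)$ originates, namely the flip from ``repelling radius $<|\lambda|<$ attracting radius'' in Corollary~\ref{c1.3} to ``attracting radius $<|\lambda|<$ repelling radius'' here.
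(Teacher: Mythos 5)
Your reduction scheme is sound and matches the paper's framework: similarity, compactness of $B-A$, and the codimension-one passage give $\sigma_2(T')=\sigma_2(A')$, Corollary~\ref{c1.10}(2) gives $\sigma_2(A')=\sigma_{a.p.}(A')$, and the duality with the surjectivity spectrum plus the identity $\sigma_2(A')=\sigma_1(A)\cup\bigl(\sigma(A)\setminus\sigma_{a.p.}(A)\bigr)$ are correct. The gap is in the one place you flagged as the ``real work'': your proposed formula $\sigma_1(A)=w(E)\cup\{\lambda:|\lambda|=|w(a)|\varphi'(a),\ a\in F\setminus E\}$ is false in general, and with it the assembled union comes out too small. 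The point is that $\sigma_1(A)=\sigma_2(A)\cap\sigma_2(A')$ is a \emph{global} intersection, while your assembly is carried out interval by interval. Two different complementary intervals of $F$ can produce oppositely oriented annuli with overlapping radial ranges, and every point of the overlap lies in $\sigma_1(A)$ without lying on any circle $|w(a)|\varphi'(a)$, $a\in F$. This is exactly the phenomenon recorded in Theorem~\ref{t1.3}, part (6), cases (a)--(d) of the paper.

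Concretely, take $F=\{0,a,b,1\}$ with $\varphi(x)<x$ on $(0,a)$ and on $(b,1)$, and choose $w$ so that $R_a=1$, $R_b=2$, $R_0=3$, $R_1=4$ (writing $R_t=|w(t)|\varphi'(t)$). A point $\lambda$ with $2<|\lambda|<3$ lies in $\sigma_2(A)$ via the interval $(0,a)$ (condition (c) of Corollary~\ref{c1.3}) and in $\sigma_2(A')$ via the interval $(b,1)$ (condition (c) of the present statement), hence in $\sigma_1(A)$. Your union misses it: it is excluded from $\sigma(A)\setminus\sigma_{a.p.}(A)$ because it \emph{is} an approximate eigenvalue (via $(0,a)$), and it is not in your candidate $\sigma_1(A)$ because $|\lambda|\notin\{1,2,3,4\}$ and $E=\emptyset$. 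Yet $\lambda\in\sigma_2(T')$ and it satisfies condition (c), so the equivalence you are asked to prove fails for the set you construct. The repair is to avoid computing $\sigma_1(A)$ as an independent input altogether (it is just as hard as the target set, being $\sigma_2(A)\cap\sigma_2(A')$): after the reduction $\sigma_2(T')=\sigma_2(A')=\sigma_{a.p.}(A')$, one should invoke the description of $\sigma_{a.p.}$ of the adjoint of a weighted composition operator on $C[0,1]$ from~\cite[Theorem 3.2 and Corollary 3.3]{Ki3}, which handles each complementary interval and each fixed point directly and yields conditions (a)--(d) without any intermediate set algebra; this is the route the paper intends.
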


Finally notice that a description of $\sigma_1(T)$ follows from Corollaries~\ref{c1.3} and~\ref{c1.4} and the fact that $\sigma_1(T) = \sigma_2(T)        \cap \sigma_2(T^\prime)$.

To illustrate the above statements let us consider a simple example.

\begin{example} \label{e1.1} Let $\varphi$ be a diffeomorphism of $[0,1]$ such that $\varphi (x) < x, x        \in (0,1)$. Notice that in this case $|\varphi^\prime(0)| \leq 1$ and $|\varphi^\prime(1)| \geq 1$. Let $w        \in C^{(1)}[0,1]$. There are the following possibilities.
\begin{enumerate}
  \item $|w(0)|\varphi^\prime(0) =|w(1)|\varphi^\prime(1)= \rho$. Then
   $$\sigma_i(T) = \rho \Gamma, i= 1 \ldots , 5.$$
  \item $|w(0)|\varphi^\prime(0)= \rho_1 > |w(1)|\varphi^\prime(1)= \rho_2$. Let $A$ be the closed annulus with the radii $\rho_1$ and $\rho_2$. Then
  \[
    \sigma_i(T) = A, i=2, \ldots , 5
  \]
  and
  \[
    \sigma_1(T) = \sigma_2(T^\prime) = \rho_1 \Gamma       \cup       \rho_2 \Gamma .
  \]
   \item $|w(0)|\varphi^\prime(0)= \rho_1 < |w(1)|\varphi^\prime(1)= \rho_2$. Let $A$ be the closed annulus with the radii $\rho_1$ and $\rho_2$. Then
  \[
   \sigma_2(T^\prime)= \sigma_i(T) = A, i=3, \ldots , 5
  \]
  and
  \[
    \sigma_1(T) = \sigma_2(T) = \rho_1 \Gamma       \cup       \rho_2 \Gamma .
  \]
\end{enumerate}
Moreover, if $\lambda       \in \sigma (T)$ and $|\lambda| > \rho (A)$ then $\lambda = w(0)$, while if $|\lambda| < 1/\rho (A^{-1})$ then $\lambda = w(1)$. In both cases the corresponding eigenspace is one-dimensional.
\end{example}

Let us briefly address the case when $\varphi$ is a diffeomorphism of $[0,1]$ onto itself such that $\varphi (0)=1$. In this case the operator $T^2$ is of the type considered in Theorem~\ref{t1.1} and a description of the spectrum and essential spectra of $T$ can be obtained from that theorem and some rather simple additional considerations. Notice that in this case $\varphi$ has exactly one fixed point in $(0,1)$ and other periodic points are of period 2. By the reasons just outlined we omit the proof of the following theorem.

\begin{theorem} \label{t1.2} Let $\varphi$ be a diffeomorphism of $[0,1]$ onto itself such that $\varphi (0)=1$. Let $w$ be an invertible element of the algebra $C^{(1)}[0,1]$
 and $T$ be the weighted composition operator
\[
  (Tf)(x) = w(x)f(\varphi (x)), \; f       \in C^{(1)}[0,1], \; x       \in [0,1].
\]

Let $A$ be the weighted composition operator on $C[0,1]$ defined as
\[
  (Ag)(x) =w(x)\varphi^\prime(x)g(\varphi (x)), g       \in C[0,1], x       \in [0,1].
\]
Let $\Pi$ be the set of all $\varphi$-periodic points and $I$ be the subset of $\Pi$ consisting of all points isolated in $\Pi$
Then
\begin{enumerate}
  \item $\sigma_i(T) = \sigma_i(A), i=1, \ldots, 5$. Moreover, if the set $\Pi$ is nowhere dense in $[0,1]$ then the sets $\sigma_i(T), i = 1, \ldots, 5$ are rotation invariant.
  \item If $\lambda       \in \sigma (T) \setminus \sigma (A) $ then there is an $a      \in I$such that $\lambda^2 = w(a)w(\varphi (a))$. Moreover, $\lambda$ is an isolated eigenvalue of $\sigma (T)$ of multiplicity 1.
\end{enumerate}

\end{theorem}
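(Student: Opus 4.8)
The plan is to handle the two assertions by different devices: assertion (1) by a direct linearization that trades $C^{(1)}[0,1]$ for $C[0,1]$, and assertion (2) by passing to $T^2$, to which Theorem~\ref{t1.1} applies.

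For (1) I would introduce the isomorphism $\Phi\colon C^{(1)}[0,1]\to C[0,1]\oplus\mathds{C}$, $\Phi f=(f^\prime,f(0))$, whose inverse is $(g,c)\mapsto c+\int_0^x g$. Computing $\Phi T\Phi^{-1}$ from $(Tf)^\prime = w^\prime\,(f\circ\varphi)+(w\varphi^\prime)(f^\prime\circ\varphi)=w^\prime(f\circ\varphi)+Af^\prime$ and $(Tf)(0)=w(0)f(1)$ gives
\[
  \Phi T\Phi^{-1}(g,c)=\Big(Ag+w^\prime\!\int_0^{\varphi(\cdot)}\! g+c\,w^\prime,\; w(0)\!\int_0^1\! g+w(0)c\Big).
\]
Every term besides $Ag$ and $w(0)c$ is compact: the Volterra-type operator $g\mapsto w^\prime\int_0^{\varphi(\cdot)}g$ factors through the compact inclusion $C^{(1)}\hookrightarrow C$, and the remaining entries are of rank one. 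Hence $\Phi T\Phi^{-1}=(A\oplus w(0))+K$ with $K$ compact, so $T$ is similar, modulo a compact operator, to $A\oplus w(0)$ on $C[0,1]\oplus\mathds{C}$. Since $\sigma_i$ ($i=1,\dots,4$) is invariant under similarities and compact perturbations, and a one-dimensional summand contributes nothing to these four essential spectra, I get $\sigma_i(T)=\sigma_i(A)$ for $i=1,\dots,4$; this argument is insensitive to the value of $\varphi(0)$ and also recovers the corresponding part of Theorem~\ref{t1.1}. For $i=5$ I would argue as in the proof of Theorem~\ref{t1.1}: since $\sigma_4(T)=\sigma_4(A)=\sigma(A)$ and $\sigma_1(T)=\sigma_1(A)\subseteq\sigma(A)$, every point of $\sigma(T)\setminus\sigma(A)$ lies in the unbounded component of $\mathds{C}\setminus\sigma_1(T)$ or in the component around $0$, both of which meet the resolvent set, whence $\sigma_5(T)=\sigma(A)$. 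The rotation invariance when $\Pi$ is nowhere dense is inherited from that of $\sigma_i(A)$, which holds because then $\varphi^2$ has no interval of fixed points and $\sigma(A)$ is a union of annuli.

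For (2), set $\psi=\varphi^2$ and $v=w\,(w\circ\varphi)$; then $\psi$ is a diffeomorphism fixing $0$, $v$ is invertible in $C^{(1)}[0,1]$, and $T^2f=v\,(f\circ\psi)$. Using $(\varphi^2)^\prime=(\varphi^\prime\circ\varphi)\varphi^\prime$ one checks that the operator associated with $T^2$ in the sense of Theorem~\ref{t1.1} is exactly $A^2$, and that the fixed-point set of $\psi$ is $\Pi$ with isolated part $I$. Now take $\lambda\in\sigma(T)\setminus\sigma(A)$. By (1) and Corollary~\ref{c1.10}, $\sigma(A)=\sigma_3(A)=\sigma_4(A)=\sigma_4(T)$, so $\lambda I-T$ is Fredholm of index $0$; hence $\lambda$ is an eigenvalue of finite multiplicity, with eigenfunction $f\in C^{(1)}[0,1]$. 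The crucial point is to show $\lambda^2\notin\sigma(A^2)$, equivalently $-\lambda\notin\sigma(A)$. For this I would run the dynamical estimate from the proof of Theorem~\ref{t1.1} directly on $f$: from $T^nf=\lambda^nf$, i.e. $w_n(f\circ\varphi^n)=\lambda^nf$, the boundedness of $f^\prime$ (hence the Lipschitz bound on $f$) and the mean value theorem force $|\lambda|$ outside the annulus $1/\rho(A^{-1})\le|\lambda|\le\rho(A)$; since $\sigma(A)$ is contained in that annulus, $-\lambda$, of the same modulus, is also outside $\sigma(A)$. Consequently $\lambda^2\in\sigma(T^2)\setminus\sigma(A^2)=v(I)\setminus\sigma(A^2)$ by Theorem~\ref{t1.1}(III) applied to $T^2$, so $\lambda^2=v(a)=w(a)w(\varphi(a))$ for some $a\in I$.

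It remains to see that $\lambda$ is isolated of multiplicity exactly $1$. Isolatedness and finiteness follow, as in Theorem~\ref{t1.1}, from $\lambda^2$ being an isolated finite-multiplicity eigenvalue of $T^2$ together with the index-$0$ Fredholm property of $\lambda I-T$. For multiplicity $1$ I would analyze $f$ orbit by orbit: on the period-$2$ orbit $\{a,\varphi(a)\}$ (or at the fixed point $p$, where the equation forces $\lambda=w(p)$) the relations $w(a)f(\varphi(a))=\lambda f(a)$ and $w(\varphi(a))f(a)=\lambda f(\varphi(a))$ determine $f$ up to one scalar, and the extremal size of $|\lambda|$ singles out a single admissible orbit, exactly as the endpoints do in Example~\ref{e1.1}. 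I expect this sign/modulus resolution—proving $-\lambda\notin\sigma(A)$ and that a unique orbit is selected—to be the main obstacle, since the naive route through $\sigma_i(T^2)=\sigma_i(A^2)$ and the polynomial spectral mapping theorem determines $\sigma_i(T)$ only up to the ambiguity $\lambda\leftrightarrow-\lambda$, and $\sigma(A)$ need not be symmetric under $\lambda\mapsto-\lambda$.
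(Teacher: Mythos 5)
Your part (1) is correct, and it is in fact a necessary repair rather than a cosmetic variant: since $\varphi(0)=1$, the hyperplane $\{f : f(0)=0\}$ on which the proof of Theorem~\ref{t1.1} rests is \emph{not} $T$-invariant, and your similarity $\Phi T\Phi^{-1}=(A\oplus w(0))+K$ on $C[0,1]\oplus\mathds{C}$, with $K$ compact, legitimately yields $\sigma_i(T)=\sigma_i(A)$ for $i=1,\dots,4$. The only weak point there is $i=5$: a point of $\sigma(T)\setminus\sigma(A)$ need not lie in the unbounded component of $\mathds{C}\setminus\sigma_1(T)$ nor in the component of $0$ (it can sit in a bounded spectral gap of $\sigma(A)$); the correct argument, as at the end of the proof of Theorem~\ref{t1.1}, is that once part (2) shows such a point is an isolated eigenvalue, the component of $\mathds{C}\setminus\sigma_1(T)$ containing it meets the resolvent set.

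The genuine gap is in part (2), exactly at the step you flag as the main obstacle. The claim that $w_n(f\circ\varphi^n)=\lambda^n f$ together with the Lipschitz bound on $f$ forces $|\lambda|\notin[1/\rho(A^{-1}),\rho(A)]$ is false. In Theorem~\ref{t1.1} that estimate can only be run at an endpoint $a$ of an interval complementary to the periodic set \emph{after} one knows $f(a)=0$, which was deduced there from $\lambda\neq w(a)$; in your situation the alternative to be ruled out is precisely $\lambda^2=w(a)w(\varphi(a))$, the case in which $f$ need not vanish on the orbit of $a$, so the estimate gives nothing, and eigenvalues of intermediate modulus genuinely occur. For instance, take $\varphi$ orientation-reversing with $\Pi=\{0,1\}\cup[a,b]$, $\varphi^2=\mathrm{id}$ on $[a,b]$, $v:=w\,(w\circ\varphi)$ real with range $[1/2,2]$ on $[a,b]$, $v(0)=v(1)=-1$, and $(\varphi^2)^\prime(0)=1/16$ with parabolic gluing at $a,b$: then $\sigma(A^2)=[1/2,2]\cup\{1/16\le|\mu|\le1/2\}$, and $\pm i$ are eigenvalues of $T$ (eigenfunctions built from the products $\prod_k\bigl(-v(\varphi^{2k}(x))\bigr)$, supported on $[0,a]\cup[b,1]$) of modulus $1$, strictly inside the annulus $[1/\rho(A^{-1}),\rho(A)]=[1/4,\sqrt{2}]$ yet outside $\sigma(A)$. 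So your route to $-\lambda\notin\sigma(A)$ collapses, and with it the deduction $\lambda^2\in v(I)$ and the ``extremal modulus selects one orbit'' multiplicity argument. What rescues the theorem is the fact you explicitly dismiss in your last sentence: for orientation-reversing $\varphi$, $\sigma(A)$ \emph{is} symmetric under $\lambda\mapsto-\lambda$. Indeed, embed $C[0,1]$ with codimension one into $C[0,p]\oplus C[p,1]$, where $p$ is the fixed point of $\varphi$; $A$ extends to an operator $\hat A$ on this direct sum which is off-diagonal for the decomposition, so conjugation by $\mathrm{diag}(I,-I)$ gives $\sigma_3(\hat A)=-\sigma_3(\hat A)$; codimension one gives $\sigma_3(A)=\sigma_3(\hat A)$, and $\sigma_3(A)=\sigma(A)$ by Corollary~\ref{c1.10}. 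Hence $\lambda\notin\sigma(A)$ does imply $\lambda^2\notin\sigma(A^2)$ by the spectral mapping theorem, and Theorem~\ref{t1.1}(III) applied to $T^2$ yields $\lambda^2\in v(I)$; the multiplicity must then be counted orbitwise, each isolated period-two orbit with $v=\lambda^2$ contributing one dimension to $\ker(\lambda I-T)$.
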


At the end of this section we will consider invertible weighted automorphisms of the Banach algebra $C^n(0,1)$, $n        \in \mathds{N}$ of all functions on $[0,1]$ continuously differentiable $n$ times. To get rid of nonessential details we will assume that $\varphi (0) = 0$ and that the set $F$ of all $\varphi$-fixed points is nowhere dense in $[0,1]$. The proof of the next theorem goes along the same lines as the proof of Theorem~\ref{t1.1} and is omitted.

\begin{theorem} \label{t1.3} Let $n        \in \mathds{N}$, $\varphi$ be a homeomorphism of $[0,1]$ onto itself such that
\begin{itemize}
  \item $\varphi , \varphi^{-1}        \in C^{(n)}[0,1]$.
  \item $\varphi (0) = 0$.
  \item The set $F$ of $\varphi$-fixed points is nowhere dense in $[0,1]$.
 \end{itemize}
 Let $w$ be an invertible element of the algebra $C^{(n)}[0,1]$ and $T$ be the weighted composition operator on $C^{(n)}[0,1]$ defined as
 $$Tf = w(f        \circ \varphi ), f        \in C^{(n)}[0,1].$$
Then
\begin{enumerate}
  \item $\sigma_3(T) = \sigma_4(T) = \sigma_5(T) = \sigma (A)$, where $A$ is the weighted composition operator on $C[0,1]$ defined as
  $$Ag = w \varphi^{(n)} (g        \circ \varphi ), g        \in C[0,1].$$
  \item $\sigma (A)$ is the annulus with the radii $R = \max \limits_{x        \in F} |w(x)\varphi^{(n)}(x)|$ and $r = \min \limits_{x        \in F} |w(x)\varphi^{(n)}(x)|$.
  \item $\lambda        \in \sigma (T) \setminus \sigma (A)$ if and only if there is an $a        \in [0,1]$ such that $a$ is an isolated point in $F$, $\lambda = w(a)$, and either $|\lambda| < r$ or $|\lambda| > R$.
  \item If $\lambda        \in \sigma (T) \setminus \sigma (A)$ then $\lambda$ is an isolated eigenvalue of $T$ and the dimension of the corresponding eigenspace is $n$.
  \item The sets $\sigma_i(T), i=1, \ldots , 5$, are rotation invariant.
  \item $\lambda        \in \sigma_1(T)$ if and only if either

  \noindent there is an $x        \in F$ such that $|\lambda| = |w(x)|$ or there are two disjoint intervals $(a,b)$ and $(c,d)$ complementary to $F$ and such that one of the following conditions holds.

  \noindent (a) $\varphi (x) < x$ on $(a,b)        \cup        (c,d)$, $|w(a)\varphi^{(n)}(a)| < |\lambda| < |w(b)\varphi^{(n)}(b)| $ and
  $|w(c)\varphi^{(n)}(c)| > |\lambda| > |w(d)\varphi^{(n)}(d)| $.

  \noindent (b) $\varphi (x) > x$ on $(a,b)        \cup        (c,d)$, $|w(a)\varphi^{(n)}(a)| < |\lambda| < |w(b)\varphi^{(n)}(b)| $ and
  $|w(c)\varphi^{(n)}(c)| > |\lambda| > |w(d)\varphi^{(n)}(d)| $.

  \noindent (c) $\varphi (x) > x$ on $(a,b)$, $\varphi (x) < x$ on $(c,d)$, $|w(a)\varphi^{(n)}(a)| < |\lambda| < |w(b)\varphi^{(n)}(b)| $ and
  $|w(c)\varphi^{(n)}(c)| < |\lambda| < |w(d)\varphi^{(n)}(d)| $.

  \noindent (d) $\varphi (x) > x$ on $(a,b)$, $\varphi (x) < x$ on $(c,d)$, $|w(a)\varphi^{(n)}(a)| > |\lambda| > |w(b)\varphi^{(n)}(b)| $ and
  $|w(c)\varphi^{(n)}(c)| > |\lambda| > |w(d)\varphi^{(n)}(d)| $.

  \item $\lambda        \in \sigma_2(T) \setminus \sigma_1(T)$ if and only if $\lambda \not        \in \sigma_1(T)$ and there is an interval $(a,b)$ complementary to $F$ such that either $\varphi (x) < x $ on $(a,b)$ and $|w(a)\varphi^{(n)}(a)| < |\lambda| < |w(b)\varphi^{(n)}(b)| $ or $\varphi (x) > x $ on $(a,b)$ and $|w(a)\varphi^{(n)}(a)| > |\lambda| > |w(b)\varphi^{(n)}(b)| $. Moreover, in this case $nul(\lambda I - T) = 0$.

  \item $\lambda        \in \sigma_2(T^\star) \setminus \sigma_1(T)$ if and only if $\lambda \not        \in \sigma_1(T)$ and there is an interval $(a,b)$ complementary to $F$ such that either $\varphi (x) < x $ on $(a,b)$ and $|w(a)\varphi^{(n)}(a)| > |\lambda| > |w(b)\varphi^{(n)}(b)| $ or $\varphi (x) > x $ on $(a,b)$ and $|w(a)\varphi^{(n)}(a)| < |\lambda| < |w(b)\varphi^{(n)}(b)| $. Moreover, in this case $def(\lambda I - T) = 0$.
\end{enumerate}
\end{theorem}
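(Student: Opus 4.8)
The plan is to follow the strategy of Theorem~\ref{t1.1} closely, using the fact that $F$ is nowhere dense to simplify matters considerably (there is no set $E = \mathrm{Int}_{[0,1]}F$ to carry along). As in the proof of Theorem~\ref{t1.1}, I would first reduce the operator $T$ on $C^{(n)}[0,1]$ to a weighted composition operator on $C[0,1]$ by passing to the $n$-th derivative. Consider the closed subspace of functions vanishing together with their first $n-1$ derivatives at $0$; this has finite codimension $n$ in $C^{(n)}[0,1]$, is isometrically isomorphic to $C[0,1]$ via $f \mapsto f^{(n)}$, and is $T$-invariant. Differentiating $Tf = w\,(f\circ\varphi)$ exactly $n$ times by the Leibniz and Fa\`a di Bruno formulas, the leading term is $w\,(\varphi')^n\,(f^{(n)}\circ\varphi)$, i.e.\ the operator $A$, and all remaining terms involve derivatives of $f$ of order $< n$, hence factor through integration operators that are compact on $C[0,1]$. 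Thus the restriction $T_0$ is similar to $A + K$ with $K$ compact. Since $\sigma_i$ for $i=1,2,3,4$ are invariant under compact perturbations and under passing to a finite-codimension subspace, this yields $\sigma_i(T) = \sigma_i(A)$ for $i=1,2,3,4$, establishing the core of part~(1).

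Next I would invoke Corollary~\ref{c1.10} applied to $A$ (a genuine weighted composition operator on $C[0,1]$ with weight $w(\varphi')^n$): it gives $\sigma_3(A) = \sigma_4(A) = \sigma_5(A) = \sigma(A)$, and since $F$ is nowhere dense, \cite[Theorem 3.2]{Ki3} identifies $\sigma(A)$ as the annulus with radii $R = \max_{x\in F}|w(x)\varphi^{(n)}(x)|$ and $r = \min_{x\in F}|w(x)\varphi^{(n)}(x)|$, giving parts (2) and (3). Rotation invariance of all five essential spectra (part (5)) then follows from the nowhere-density of $F$, exactly as in Corollary~\ref{c1.1}(3): the spectra coincide on the $C[0,1]$ side with unions of annuli determined by the endpoint data, and each such annulus is rotation invariant. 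For part (3), the argument determining which $\lambda \in \sigma(T)\setminus\sigma(A)$ occur is the verbatim analogue of the two-case analysis in Theorem~\ref{t1.1}: an approximate-eigenfunction limit $f$ satisfies $w\,(f\circ\varphi) = \lambda f$, and the iteration estimate using the mean value theorem forces $\lambda = w(a)$ at an isolated fixed point $a$ with $|\lambda|$ outside $[r,R]$; the residual-spectrum case is excluded by index invariance under the compact perturbation. Part (4) -- that the eigenspace has dimension $n$ -- is where the higher-order setting genuinely differs: here one checks that the kernel of $\lambda I - T$ at such an isolated $\lambda = w(a)$ is spanned by the $n$ functions behaving near $a$ like the monomials $(x-a)^0, \dots, (x-a)^{n-1}$, reflecting the codimension-$n$ reduction, and that $\lambda$ is isolated in $\sigma(B)\setminus\sigma(A)$ with $B-A$ compact.

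The substantive new work is parts (6), (7), (8), describing $\sigma_1$, $\sigma_2\setminus\sigma_1$, and $\sigma_2(T^\star)\setminus\sigma_1$ in terms of the interval data. The key fact is that on each interval $(a,b)$ complementary to $F$, the operator $A$ restricted to $C[a,b]$ has, by \cite[Theorem 3.2]{Ki3}, an annular spectrum between the endpoint radii $|w(a)\varphi^{(n)}(a)|$ and $|w(b)\varphi^{(n)}(b)|$, and on such an interval $A$ is semi-Fredholm with a definite sign of the index determined by whether $\varphi(x)<x$ or $\varphi(x)>x$ and by which endpoint radius is larger. I would use the identities $\sigma_1(T) = \sigma_2(T)\cap\sigma_2(T')$ and $\sigma_3(T) = \sigma_2(T)\cup\sigma_2(T')$ together with the characterizations of $\sigma_2$ and $\sigma_2(T')$ as the one-sided spectra. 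The logic is: $\lambda \in \sigma_2(T)\setminus\sigma_1(T)$ precisely when there is exactly one complementary interval whose open annulus contains $|\lambda|$ with the orientation making $\mathrm{nul} = 0$ but $\mathrm{def} > 0$ (part (7)); $\lambda \in \sigma_2(T^\star)\setminus\sigma_1(T)$ is the mirror condition (part (8)); and $\lambda \in \sigma_1(T)$ occurs either when some endpoint radius equals $|\lambda|$ (non-semi-Fredholm) or when two complementary intervals contribute indices of opposite sign that prevent the operator from being semi-Fredholm (part (6), cases (a)--(d)). The main obstacle I anticipate is bookkeeping the sign of the Fredholm index contributed by each complementary interval as a function of the two independent binary choices ($\varphi(x)\lessgtr x$ and which endpoint radius dominates), and correctly assembling these local contributions into a global semi-Fredholm criterion; this is a careful but routine case analysis once the single-interval index computation from \cite{Ki3} is in hand, and the four cases in (6) are exactly the configurations in which two intervals produce canceling half-line contributions so that neither $\mathrm{nul}$ nor $\mathrm{def}$ stays finite.
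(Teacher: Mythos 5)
The paper gives no proof of this theorem: it is declared to ``go along the same lines as the proof of Theorem~\ref{t1.1}'' and is omitted, so your proposal can only be measured against that template. The structural part of your plan is indeed the intended one and is sound: the $T$-invariant subspace $W_0=\{f: f^{(j)}(0)=0,\ 0\le j\le n-1\}$ of codimension $n$, the similarity of $T|_{W_0}$ to $A+K$ with $K$ compact (Leibniz plus Fa\`a di Bruno, the sub-leading terms factoring through Volterra-type operators on $C[0,1]$), the invariance of $\sigma_i$, $i=1,\dots,4$, under compact perturbation and finite-codimensional passage, and Corollary~\ref{c1.10} together with \cite{Ki3} for parts (1), (2), (5) and for the $\sigma_2$/$\sigma_2(T^\prime)$ bookkeeping behind (6)--(8).

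The genuine gap is in parts (3) and (4), exactly where you assert that the iteration argument of Theorem~\ref{t1.1} applies ``verbatim''; for $n\ge2$ it does not. Differentiating the limit equation $w\,(f\circ\varphi)=\lambda f$ at a fixed point $a$ gives $\lambda f^{(k)}(a)=w(a)\varphi'(a)^{k}f^{(k)}(a)+(\text{terms in }f^{(j)}(a),\ j<k)$, so $f$ vanishes at $a$ to order $n$ --- which is what the iteration estimate needs in order to beat the weight $|w(a)\varphi'(a)^{n}|$ of $A$ --- only if $\lambda\neq w(a)\varphi'(a)^{k}$ for \emph{every} $k\le n-1$; the first-order Lipschitz bound (1.2) of Theorem~\ref{t1.1} yields only $|\lambda|\le|w(a)\varphi'(a)|$, which for $n\ge2$ is compatible with $|\lambda|>R$. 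Hence the argument proves $\lambda\in\{w(a)\varphi'(a)^{k}:0\le k\le n-1\}$, not $\lambda=w(a)$, and the stronger conclusion is actually false. Concretely, take $n=2$, $\varphi'(0)=s<1$, $\varphi'(1)=t>1$, $w(0)=1$, $|w(1)|t^{2}<s^{2}$: then $\sigma(A)$ is the annulus with radii $|w(1)|t^{2}\le s^{2}$, and $s=w(0)\varphi'(0)$ lies outside it; $s$ is not an eigenvalue of $T|_{W_0}$ (iteration bound), so $(sI-T)|_{W_0}$, a compact perturbation of the invertible $sI-A$, is Fredholm of index $0$ and injective, hence invertible; if $sI-T$ were invertible on $C^{(2)}[0,1]$ it would then induce an invertible operator on the two-dimensional quotient $C^{(2)}[0,1]/W_0$, but $T$ induces there the triangular matrix $\bigl(\begin{smallmatrix}1&0\\ w'(0)&s\end{smallmatrix}\bigr)$, which is singular at $s$. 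Thus $s\in\sigma(T)\setminus\sigma(A)$ although $s\notin\{w(0),w(1)\}$.

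The same computation refutes your argument for (4): a function behaving like $(x-a)^{k}$ near $a$ solves the eigenvalue equation to leading order with eigenvalue $w(a)\varphi'(a)^{k}$, a \emph{different} number for each $k$ (note $w(a)\notin\sigma(A)$ forces $\varphi'(a)\neq1$, so these $n$ numbers are pairwise distinct). Your $n$ monomial-like functions are candidate eigenvectors for $n$ distinct simple eigenvalues, not a basis of an $n$-dimensional kernel at $\lambda=w(a)$; indeed, since $\ker(\lambda I-T)$ meets the invariant subspace $\{f: f^{(j)}(a)=0,\ j\le n-1\}$ trivially (iteration bound again), it injects into the one-dimensional $\lambda$-eigenspace of the triangular jet matrix at $a$, so the eigenspace at $\lambda=w(a)$ is one-dimensional. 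The correct picture --- which a blind attempt should have surfaced rather than asserting verbatim transfer --- is that each isolated fixed point contributes the $n$ simple eigenvalues $w(a)\varphi'(a)^{k}$, $k=0,\dots,n-1$, lying outside the annulus, of total multiplicity $n$; parts (3) and (4) as literally stated are incorrect for $n\ge2$, so no proof of them can be completed. (A smaller defect of the same kind: the ``$nul=0$, $def>0$'' orientation you adopt for $\sigma_2(T)\setminus\sigma_1(T)$ in (7) copies the theorem's clause, but it contradicts the paper's own definition of $\sigma_2$ --- points of $\sigma_2(T)\setminus\sigma_1(T)$ have infinite-dimensional kernel --- as well as the $n=1$ Corollaries~\ref{c1.3} and~\ref{c1.4}; the ``moreover'' clauses of (7) and (8) are interchanged.)
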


\section{ Essential spectra of invertible weighted automorphisms of the algebras $Lip_1^n[0,1]$.}

In this section we will consider Banach algebras $Lip_1^n[0,1]$, $       \in \mathds{N}$ of functions continuously differentiable $n-1$ times on $[0,1]$ and with the $n^{th}$ derivative in $L^\infty(0,1)$, i.e. there is $g       \in L^\infty(0,1)$ such that $f^{(n-1)}(x) = \int \limits_0^x f(t)dt, x      \in [0,1]$. It is well known that the space $Lip_1^n[0,1]$ endowed with the norm
\[
  \|f\| = \sum \limits_{i=0}^{n-1} |f^{(i)}(0)| + \|f^{(n)}_\infty\|
\]
is a Banach algebra.

Our nearest goal is to obtain a description of essential spectra of invertible weighted composition operators acting on the space $Lip_1[0,1]$. Thus we consider operators on this space of the form
$$Tf(x) = w(x)f(\varphi (x)), f        \in Lip_1(0,1), x        \in [0,1],$$
where $w$ is an invertible element of the algebra $Lip_1[0,1]$ and $\varphi$ is a homeomorphism of $[0,1]$ onto itself such that $\varphi$ and $\varphi^{-1}$ belong to $Lip_1[0,1]$.

 Temporarily we will assume that $\varphi (x) < x$ on $(0,1)$. Let $X$ be the hyperplane in $Lip_1[0,1]$ defined as $X = \{f        \in Lip_1(0,1): \, f(0) = 0\}$. Then $TX=X$ and the restriction $T|X$ is similar to the operator $B$ on $L^\infty(0,1)$ defined as
$$ (Bf)(x) = w^\prime(x) \int \limits_0^{\varphi (x)} f(t)dt + w(x)\varphi^\prime(x)f(\varphi (x)), f        \in L^\infty(0,1), x        \in [0,1].)$$
Let $A$ be the weighted composition operator on $L^\infty(0,1)$ defined as
$$Af(x) = w(x)\varphi^\prime(x)f(\varphi (x)), f        \in L^\infty(0,1), x        \in [0,1].$$
Then obviously the operator $B - A$ is a compact operator on $L^\infty(0,1)$ and taking into consideration that $dim(Lip_1(0,1)/X = 1$ we get
$$\sigma_i(T) =\sigma_i(A), i=1,2,3,4.$$
In the next few lemmas we will look at some properties of the spectrum and essential spectra of operator $A$. It is useful to keep in mind that $A$ can be identified with an invertible weighted composition operator $\hat{A}$ on $C(Q)$ where $Q$ is the Gelfand compact of the algebra $L^\infty(0,1)$,
$$\hat{A} \hat{f}(t)=\hat{w}(t)\widehat{\varphi^\prime(t)}\hat{f}(\psi (t)), t        \in Q.$$
In the above formula $\hat{f}$ means the Gelfand transform of $f$, $f        \in L^\infty(0,1)$ and $\psi$ is the homeomorphism of $Q$ that corresponds to the isomorphism $J$ of $L^\infty(0,1)$ defined as $Jf(x) = f(\varphi (x))$ (The conditions we put on $\varphi$ guarantee that $J$ is indeed an isomorphism).

\begin{lemma} \label{l1} The set $\sigma (A) = \sigma (\hat{A})$ is rotation invariant.
\end{lemma}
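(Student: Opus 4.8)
The plan is to exhibit, for each $\theta\in\mathds{R}$, an invertible isometry of $L^\infty(0,1)$ that conjugates $A$ into $e^{i\theta}A$; since similar operators share their spectrum, this forces $\sigma(A)=\sigma(e^{i\theta}A)=e^{i\theta}\sigma(A)$ for every $\theta$, which is precisely rotation invariance. The equality $\sigma(A)=\sigma(\hat A)$ is automatic, the Gelfand transform being an isometric isomorphism of the commutative $C^\ast$-algebra $L^\infty(0,1)$ onto $C(Q)$; so I am free to work in whichever realization is more convenient, and for the construction below $L^\infty(0,1)$ is the natural one.

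First I would reduce the problem to a cohomological one. Conjugating $A$ by multiplication $M_u$ by a unimodular $u\in L^\infty(0,1)$ leaves the composition $\varphi$ untouched and multiplies the weight $w\varphi^\prime$ by the factor $u(x)/u(\varphi(x))$. Hence it suffices to solve the cocycle equation $u(\varphi(x))=e^{-i\theta}u(x)$ for a.e. $x\in(0,1)$, with $|u|\equiv 1$. This is where the standing hypothesis $\varphi(x)<x$ on $(0,1)$ enters decisively: $\varphi$ has no fixed point in the open interval, so for every $c\in(0,1)$ the forward orbit $\varphi^n(c)$ decreases to $0$ and the backward orbit increases to $1$. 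Consequently the half-open intervals $\varphi^n(D)=[\varphi^{n+1}(c),\varphi^n(c))$, $n\in\mathds{Z}$, with $D=[\varphi(c),c)$, tile $(0,1)$ up to a null set; that is, $D$ is a measurable fundamental domain for the free $\mathds{Z}$-action generated by $\varphi$.

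I would then define $u$ to be piecewise constant, equal to $e^{-in\theta}$ on $\varphi^n(D)$. By construction $u$ is Borel measurable and $|u|\equiv 1$, so $u\in L^\infty(0,1)$; and since a point of $\varphi^n(D)$ is carried by $\varphi$ into $\varphi^{n+1}(D)$ we get $u(\varphi(x))=e^{-i(n+1)\theta}=e^{-i\theta}u(x)$, the required identity. The multiplier $M_u$ is an invertible isometry with inverse $M_{\bar u}$, and a direct substitution gives $(M_uAM_u^{-1}f)(x)=u(x)\,w(x)\varphi^\prime(x)\,u(\varphi(x))^{-1}f(\varphi(x))=e^{i\theta}(Af)(x)$, completing the argument.

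The only genuinely delicate point is the construction of the fundamental domain and the verification that the cocycle $u$ lands in $L^\infty(0,1)$; everything else is formal. The essential input is that $\varphi$ acts \emph{freely} on $(0,1)$ — guaranteed by $\varphi(x)<x$ — so that the orbit relation is that of a free $\mathds{Z}$-action admitting a measurable transversal. Were interior fixed or periodic points present, the local weight $w\varphi^\prime$ there would pin down a non-rotation-invariant portion of the spectrum and the argument would fail; under the present hypothesis no such obstruction arises.
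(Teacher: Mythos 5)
Your proposal is correct, but it follows a genuinely different route from the paper. The paper's proof is a two-line citation: it identifies $A$ with $\hat{A}$ acting on $C(Q)$, $Q$ the Gelfand compact of $L^\infty(0,1)$, invokes Frolik's theorem to conclude that the induced homeomorphism $\psi$ of $Q$ has \emph{no} periodic points, and then quotes a general theorem from the author's earlier work (rotation invariance of the spectrum of a weighted composition operator on $C(Q)$ when $\psi$ is aperiodic). You instead solve the cocycle equation $u\circ\varphi=e^{-i\theta}u$ exactly, by exhibiting the measurable fundamental domain $D=[\varphi(c),c)$ for the $\mathds{Z}$-action generated by $\varphi$ and setting $u=e^{-in\theta}$ on $\varphi^n(D)$; conjugation by the unimodular multiplier $M_u$ then carries $A$ to $e^{i\theta}A$, and rotation invariance follows by similarity. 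Both arguments are sound; yours is elementary and self-contained, but it leans decisively on the standing hypothesis $\varphi(x)<x$ on $(0,1)$, which makes the action dissipative and guarantees an exact measurable transversal. The paper's machinery is strictly more general: in the aperiodic-but-conservative situations covered by the cited theorem (where no measurable fundamental domain exists) one can only produce \emph{approximate} unimodular cocycles $\|g\circ\varphi-\alpha g\|\leq\varepsilon$, which still suffice — indeed the paper itself runs exactly that approximate-cocycle argument later, in the proof of part (1) of Theorem~\ref{t6}, via Theorem B.1 of~\cite{AAK}. So your construction buys transparency and independence from the earlier papers, at the cost of generality; the paper's buys brevity and a statement that survives when the exact cocycle does not exist.
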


\begin{proof} The set of $\psi$-periodic points is nowhere dense in $Q$ and actually by Frolik's theorem (see~\cite[Theorem 6.25, page 150]{Wa}) it is empty. The statement follows now from e.g.~\cite[Theorem 3.7]{Ki1}.
\end{proof}

\begin{lemma} \label{l2} The set $\sigma (A) = \sigma (\hat{A})$ is an annulus or a circle.
\end{lemma}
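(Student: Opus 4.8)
The plan is to upgrade the rotation invariance already established in Lemma~\ref{l1} to the full annular structure by proving that $\sigma(A)$ is connected. Since $A$ is invertible we have $0\notin\sigma(A)$, so by Lemma~\ref{l1} the set $\sigma(A)$ is a union of circles $r\Gamma$ centered at the origin with $r$ ranging over a compact set $S\subset(0,\infty)$; that is, $\sigma(A)=\{\lambda:|\lambda|\in S\}$. A rotation invariant compact subset of $\mathds{C}\setminus\{0\}$ of this form is an annulus (when $S$ is a nondegenerate interval) or a circle (when $S$ reduces to a point) precisely when $S$ is connected. Hence the whole lemma reduces to showing that $S$ is an interval, equivalently that $\sigma(A)$ has no \emph{radial gap}.

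First I would pin down the two extreme radii. The largest element of $S$ is $\rho(A)$ and, because $A$ is invertible, the smallest is $1/\rho(A^{-1})$; by the results of the earlier parts of the series these are
\[
  R_1=\rho(A)=\max_{\mu\in\mathcal{M}}\exp\int\ln|\hat w\,\widehat{\varphi^\prime}|\,d\mu,\qquad
  R_2=\frac{1}{\rho(A^{-1})}=\min_{\mu\in\mathcal{M}}\exp\int\ln|\hat w\,\widehat{\varphi^\prime}|\,d\mu,
\]
where $\mathcal{M}$ is the set of $\psi$-invariant regular probability measures on $Q$. To fill the interval I would exploit that $\mathcal{M}$ is weak-$*$ compact and convex, hence connected, and that the functional $\mu\mapsto\int\ln|\hat w\,\widehat{\varphi^\prime}|\,d\mu$ is affine and weak-$*$ continuous; its image is therefore the whole segment $[\ln R_2,\ln R_1]$. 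For each $r\in[R_2,R_1]$ I pick $\mu\in\mathcal{M}$ realizing $\int\ln|\hat w\,\widehat{\varphi^\prime}|\,d\mu=\ln r$ and, by the averaging construction used in the proof of~\cite[Theorem 3.2]{Ki3}, produce approximate eigenvectors of $A$ for some $\lambda$ with $|\lambda|=r$, so that $r\Gamma\cap\sigma_{a.p.}(A)\neq\emptyset$. Rotation invariance then gives $r\Gamma\subseteq\sigma(A)$, and letting $r$ sweep $[R_2,R_1]$ yields $S=[R_2,R_1]$.

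The step I expect to be the main obstacle is exactly this filling step, i.e. excluding a gap circle $r\Gamma\subset\mathds{C}\setminus\sigma(A)$ with $R_2<r<R_1$. Such a gap would produce a nontrivial Riesz idempotent $P=\frac{1}{2\pi i}\int_{|\lambda|=r}(\lambda I-A)^{-1}\,d\lambda$ commuting with $A$ and splitting $\sigma(A)$ into a part inside and a part outside $r\Gamma$; the substance of the argument is that a weighted composition operator whose underlying homeomorphism $\psi$ is \emph{free} (no periodic points, guaranteed here by Frolik's theorem as in Lemma~\ref{l1}) admits no such radial spectral splitting. Rather than reproving this, I would invoke the annulus theorem for aperiodic homeomorphisms from~\cite{Ki1} and~\cite{Ki3}, of which the present situation is an instance once $A$ is realized as $\hat A$ on $C(Q)$; connectedness of $\sigma(\hat A)$ together with Lemma~\ref{l1} then finishes the proof.
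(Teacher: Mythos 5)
Your reduction of the lemma to connectedness of $\sigma(A)$, and your identification of the extreme radii through invariant measures, match the paper's starting point, but the step you yourself flag as the main obstacle --- ruling out a radial gap --- is exactly where the argument breaks, and neither of your two proposed fixes works. First, the averaging construction that produces approximate eigenvectors of modulus $\exp\int\ln|\hat w\,\widehat{\varphi^\prime}|\,d\mu$ is valid for \emph{ergodic} $\mu$, not for arbitrary invariant measures; the ergodic measures are the extreme points of $\mathcal{M}$, and the image of the set of extreme points under an affine functional need not be an interval, so the connectedness of $\mathcal{M}$ does not fill the annulus. Second, there is no ``annulus theorem for aperiodic homeomorphisms'' to invoke: for weighted composition operators on $C(Q)$, aperiodicity of $\psi$ yields rotation invariance (that is Lemma~\ref{l1}, via Frolik's theorem and~\cite[Theorem 3.7]{Ki1}) but not connectedness. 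The paper's own Example~\ref{e2} refutes precisely the statement you want to cite: there $\varphi$ fixes $1/2$, the fixed-point set has Lebesgue measure zero, so the induced $\psi$ on $Q$ is still aperiodic, yet $\sigma(A)$ is the union of two disjoint circles of radii $A$ and $B$. (In that example the functional $\mu\mapsto\int\ln|\hat w\,\widehat{\varphi^\prime}|\,d\mu$ still sweeps the whole interval $[\ln B,\ln A]$ over $\mathcal{M}$, which shows concretely that non-extreme invariant measures produce no approximate eigenvalues.) The underlying reason is that on the hyperstonian space $Q$ invariant clopen sets abound --- take $\bigcup_{n}\psi^n(\hat E)$ for a wandering clopen $\hat E$ --- so nothing at the level of $C(Q)$ alone can prevent a radial splitting; connectedness is a consequence of the global hypothesis $\varphi(x)<x$ on all of $(0,1)$ together with the fact that $A$ comes from an operator on the algebra $Lip_1(0,1)$.

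The paper's proof uses exactly this extra structure, and this is the idea missing from your proposal. Assuming a gap at radius $r=1$, it invokes~\cite[Theorem 3.10]{Ki1} to produce two $\varphi$-invariant sets $E,F\subseteq[0,1]$ of positive measure with $mes(E)+mes(F)=1$ on which $|w_N|(\varphi^N)^\prime$ is uniformly less than $s^N<1$, respectively greater than $t^N>1$; using $\varphi(x)<x$ on $(0,1)$ and continuity of $w$ it transfers the splitting to the \emph{unweighted} operator $Sf=\varphi^\prime\,(f\circ\varphi)$ on $L^\infty(0,1)$, and then passes to the closed $T$-invariant subalgebra $L_1=\{f\in Lip_1(0,1): f^\prime=0 \ \text{a.e.\ on}\ F\}$, on which $\mathbf{S}f=f\circ\varphi$ is an algebra automorphism with $\rho_{ess}(\mathbf{S})=\rho(S,L^\infty(E))<1$. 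Then $\sigma(\mathbf{S})\cap\Gamma$ is at most finite, contradicting the Kamowitz--Scheinberg theorem~\cite{KS}, which forces the spectrum of an automorphism of a semisimple commutative Banach algebra to contain the whole unit circle. The passage from $C(Q)$ back to a subalgebra of $Lip_1(0,1)$ --- where continuity on $[0,1]$ ties $E$ and $F$ together --- is what makes the gap impossible; no purely dynamical statement about $\psi$ on $Q$ can do the job.
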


\begin{proof}  By Lemma~\ref{l1} it is enough to prove that the set $\sigma (\hat{A})$ is connected. Assume to the contrary that there is a positive number $r$ such that $\sigma (\hat{A}) = \sigma_1        \cup        \sigma_2$ where $\sigma_i       \neq       \emptyset, i=1,2$ and
$\sigma_1        \subset \{\lambda        \in \mathds{C} : |\lambda| < r\}$ and $\sigma_2        \subset \{\lambda        \in \mathds{C} : |\lambda| > r\}$. Without loss of generality we can assume that $r=1$. Then (see~\cite[Theorem 3.10]{Ki1})) there are two $\varphi$-invariant subsets $E$ and $F$ of $[0,1]$ both of positive Lebesgue measure, a positive integer $N$ and positive numbers $s$,$s<1$ and $t$, $t>1$ such that $mes(E) + mes(F) = 1$,
$$|w_N|(\varphi^N)^\prime < s^N \; \mathrm{a.e. on} \; E,$$
and
$$ |w_N|(\varphi^N)^\prime > t^N \; \mathrm{a.e. on} \; F.$$
Without loss of generality we can assume that $N=1$ and thus
\[
  |w|\varphi^\prime < s \; \text{a.e. on}\; E \; \text{and} \; |w|\varphi^\prime > t \; \text{a.e. on} \; F.
\]

 Because the sets $E$ and $F$ are $\varphi$-invariant and $w$ is continuous on $[0,1]$ we see that there are positive numbers $\varepsilon$ and $u$, such that $0 < \varepsilon, u <1$ and
 \[
   \|\varphi^\prime \chi_D\|_\infty < u \frac{1}{\|w^{-1}\chi_H\|_\infty} \eqno{(2.1)}
 \]
 where $D = E       \cap ((0, \varepsilon )       \cup       (1-\varepsilon, 1))$ and $H = F       \cap ((0, \varepsilon )       \cup       (1-\varepsilon, 1))$. Combining (2.1) with the fact that $\varphi (x) < x$ on $(0,1)$ it is not difficult to see that there are $m       \in \mathds{N}$ and $v       \in (0,1)$ such that
 \[
   \|\varphi_m^\prime \chi_E \|_\infty < v \frac{1}{\|(\varphi^\prime)_m^{-1} \chi_F\|}  \eqno{(2.2)}
 \]
 Consider on $L^\infty (0,1)$ the weighted composition operator defined as
 \[
   Sf = \varphi^\prime f      \circ \varphi, f       \in L^\infty (0,1).
 \]
 It follows from (2.2) that $\sigma (S)$ is the union of two disjoint nonempty rotation invariant subsets $\sigma (s, L^\infty (E))$ and $\sigma (S, L^\infty (F))$. Considering, if necessary, $S^{-1}$ we can assume that $\rho (S, L^\infty(E))<1$. Let $L_1$ be the subspace of $Lip_1(0,1)$ defined as follows
 \[
   L_1 = \{f       \in Lip_1(0,1) : f^\prime =0 \; \text{a.e. on} \; F\}.
 \]
 Then $L_1$ is a Banach subalgebra of $Lip_1(0,1)$ and the operator $\mathbf{S}$,
 \[
   \mathbf{S}f = f       \circ \varphi, f       \in L_1,
 \]
 is an automorphism of $L_1$. Because $\rho_{ess}(\mathbf{S}) = \rho (S,L^\infty (E)) < 1$ the intersection
 $\sigma (\mathbf{S}       \cap \Gamma$ consists of at most finite number of points. But by the Kamowitz - Scheinberg theorem~\cite{KS} the spectrum of an automorphism of a semisimple commutative Banach algebra must contain the unit circle, a contradiction.

\end{proof}

\begin{corollary} \label{c1} $\sigma_5(T) = \sigma_5(A) = \sigma (A)$.

\end{corollary}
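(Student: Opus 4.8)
The plan is to reduce everything to the single fact $\sigma_3(A)=\sigma(A)$. Recall from the discussion preceding Lemma~\ref{l1} that $\sigma_i(T)=\sigma_i(A)$ for $i=1,2,3,4$, and from Lemma~\ref{l2} that $\sigma(A)$ is a closed annulus or circle; since $A$ is invertible we have $0\notin\sigma(A)$, so this annulus $\{\mu:R_2\le|\mu|\le R_1\}$ has $R_2>0$.

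First I would establish $\sigma_3(A)=\sigma(A)$. Viewing $A$ as the invertible weighted composition operator $\hat A$ on $C(Q)$, whose underlying homeomorphism $\psi$ has no periodic points (Frolik's theorem, as in the proof of Lemma~\ref{l1}), the $C(Q)$ version of Corollary~\ref{c1.10}(3) --- that is, \cite[Theorem 3.2 and Corollary 3.3]{Ki3} --- yields $\sigma_3(\hat A)=\sigma(\hat A)$, hence $\sigma_3(A)=\sigma(A)$. Because $\sigma_3\subseteq\sigma_5\subseteq\sigma$ for every bounded operator, this immediately gives the middle equality $\sigma_5(A)=\sigma(A)$. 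Moreover $\sigma_3(T)=\sigma_3(A)=\sigma(A)$, so $\sigma(A)=\sigma_3(T)\subseteq\sigma_5(T)$, which is one half of the remaining identity.

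For the reverse inclusion $\sigma_5(T)\subseteq\sigma(A)$ I would show that each $\lambda\notin\sigma(A)$ is deleted in the formation of $\sigma_5(T)$. The key input is $\sigma_1(T)=\sigma_1(A)\subseteq\sigma(A)=\{\mu:R_2\le|\mu|\le R_1\}$. If $|\lambda|>R_1$, the radial ray from $\lambda$ to any point of modulus larger than the spectral radius $\rho(T)$ stays inside $\{\mu:|\mu|>R_1\}\subseteq\mathds{C}\setminus\sigma_1(T)$ and terminates at a point of the resolvent set of $T$. If $|\lambda|<R_2$, the radial segment from $\lambda$ to $0$ stays inside $\{\mu:|\mu|<R_2\}\subseteq\mathds{C}\setminus\sigma_1(T)$ and terminates at $0$, which lies in the resolvent set of $T$ because $T$ is invertible. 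In either case the component of $\mathds{C}\setminus\sigma_1(T)$ containing $\lambda$ meets the resolvent set of $T$, so $\lambda\notin\sigma_5(T)$ by definition. Together with the previous paragraph this gives $\sigma_5(T)=\sigma(A)$.

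The main obstacle is the first step, $\sigma_3(A)=\sigma(A)$: one must transfer the statement of Corollary~\ref{c1.10}(3), phrased for $C[0,1]$, to the Gelfand compactum $Q$, checking that the results of \cite{Ki3} apply to $\hat A$ on $C(Q)$ with its fixed-point-free (indeed periodic-point-free) homeomorphism $\psi$. Everything after that is a soft topological argument; the only arithmetical points to record are that invertibility of $T$ places $0$ in the resolvent set and invertibility of $A$ keeps $R_2$ strictly positive, so that the two \emph{a priori} distinct complementary components of the annulus are genuinely reached by the radial paths above.
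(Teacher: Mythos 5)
Your proof is correct and follows essentially the route the paper intends: the paper states Corollary~\ref{c1} without proof as an immediate consequence of $\sigma_i(T)=\sigma_i(A)$, $i\le 4$, Lemmas~\ref{l1}--\ref{l2}, and the fact $\sigma_3(\hat A)=\sigma(\hat A)$ from~\cite{Ki3} on $C(Q)$ (the same pattern used explicitly in the proof of Theorem~\ref{t1.1}). Your connectivity argument for $\sigma_5(T)\subseteq\sigma(A)$ — joining points outside the annulus to the resolvent set through $\mathds{C}\setminus\sigma_1(T)$, using invertibility of $T$ for the inner component — is exactly the right way to finish without appealing to Lemma~\ref{l3}, which only appears later in the paper.
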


\begin{lemma} \label{l3} $\sigma (T) \setminus \sigma (A)        \subseteq \{w(0), w(1)\}$.

\end{lemma}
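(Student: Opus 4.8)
The plan is to mirror the proof of part $(III)$ of Theorem~\ref{t1.1}, now with the role of $C[0,1]$ played by $L^\infty(0,1)\cong C(Q)$. Since $\varphi(x)<x$ on $(0,1)$, the only fixed points of $\varphi$ are $0$ and $1$, so the relevant value set is $\{w(0),w(1)\}$, and it suffices to assume $\lambda\in\sigma(T)\setminus\sigma(A)$ together with $\lambda\notin\{w(0),w(1)\}$ and derive a contradiction. I would split the argument according to the partition $\sigma(T)=\sigma_{a.p.}(T)\cup\sigma_r(T)$.

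For the residual part I would use that $\lambda\notin\sigma(A)$ makes $\lambda I-A$ invertible on $L^\infty(0,1)$. Because $T|X$ is similar to $B$ with $B-A$ compact and $X$ is a hyperplane in $Lip_1(0,1)$, the operator $\lambda I-B=(\lambda I-A)-(B-A)$ is Fredholm of index $0$, and hence so is $\lambda I-T$. A Fredholm operator of index $0$ belonging to $\sigma(T)$ must fail to be injective, so $\lambda\in\sigma_{a.p.}(T)$; thus $\sigma(T)\setminus\sigma(A)\subseteq\sigma_{a.p.}(T)$ and the residual case reduces to the approximate point case. (Equivalently, a point of $\sigma_r(T)\setminus\sigma_{a.p.}(T)$ would be semi-Fredholm with nonzero index, contradicting invariance of the index under the compact perturbation $B-A$.)

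For $\lambda\in\sigma_{a.p.}(T)$, choose $f_n\in Lip_1(0,1)$ with $\|f_n\|=1$ and $Tf_n-\lambda f_n\to0$. If $\|f_n\|_\infty\to0$, then $|f_n(0)|\to0$ forces $\|f_n'\|_\infty\to1$; differentiating and using $(Tf_n)'=w'(f_n\circ\varphi)+A(f_n')$ together with $\|w'(f_n\circ\varphi)\|_\infty\le\|w'\|_\infty\|f_n\|_\infty\to0$ gives $A(f_n')-\lambda f_n'\to0$ in $L^\infty$, whence $\lambda\in\sigma_{a.p.}(A)\subseteq\sigma(A)$, a contradiction. If $\|f_n\|_\infty\not\to0$, then the $f_n$ are uniformly bounded and uniformly Lipschitz with constant $1$, so by Arzel\`a--Ascoli a subsequence converges in $C[0,1]$ to some $f\neq0$ that is Lipschitz-$1$ and satisfies $w\,(f\circ\varphi)=\lambda f$ on $[0,1]$. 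Evaluating at the fixed points and using $\lambda\neq w(0),w(1)$ yields $f(0)=f(1)=0$. Picking $c\in(0,1)$ with $f(c)\neq0$ and iterating gives $w_n(c)f(\varphi^n(c))=\lambda^nf(c)$; since $\sigma(A)$ is an annulus or circle by Lemma~\ref{l2}, either $|\lambda|>\rho(A)$ (iterate forward, $\varphi^n(c)\to0$) or $|\lambda|<1/\rho(A^{-1})$ (iterate backward, $\varphi^{-n}(c)\to1$), and in each case the growth rate of the left-hand side should be strictly smaller than $|\lambda|$.

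The main obstacle is precisely this last growth estimate. Using $f(0)=0$ and the Lipschitz bound one has $|f(\varphi^n(c))|\le\varphi^n(c)$, and the equation forces $|\lambda|=\lim_n|w_n(c)f(\varphi^n(c))|^{1/n}$ with $|w_n(c)|^{1/n}\to|w(0)|$. One then needs $\limsup_n(\varphi^n(c))^{1/n}$ to be controlled by the inner or outer radius of the annulus $\sigma(A)$. In the smooth setting of Theorem~\ref{t1.1} this is the value $\varphi'(0)$ supplied by the mean value theorem, but here $\varphi'$ lies only in $L^\infty(0,1)$, so no pointwise derivative at the fixed point is available. I would resolve this by writing $\varphi^n(c)=c\prod_{k=0}^{n-1}\bigl(\varphi(\varphi^k(c))/\varphi^k(c)\bigr)$ and controlling the decay through the averaged ratio $\varphi(x)/x=\tfrac1x\int_0^x\varphi'$ as $x\to0^+$ (equivalently, through the essential limits of $\varphi'$ at the fixed point), which is exactly the quantity governing the corresponding radius of $\sigma(A)$ via the spectral radius formula $\rho(A)=\lim_n\|w_n(\varphi^n)'\|_\infty^{1/n}$. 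Lemma~\ref{l2} is what makes this manageable: knowing a priori that $\sigma(A)$ is a full annulus means only a one-sided comparison of rates is needed to reach the contradiction.
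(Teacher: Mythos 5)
Your overall architecture is sound and essentially parallels the paper's: the residual case via Fredholm index zero under the compact perturbation $B-A$, the dichotomy $\|f_n\|_\infty\to 0$ (which pushes $\lambda$ into $\sigma_{a.p.}(A)$) versus Arzel\`a--Ascoli (which produces a Lipschitz-$1$ solution of $w(f\circ\varphi)=\lambda f$ with $f(0)=f(1)=0$), and the iteration $w_n(c)f(\varphi^n(c))=\lambda^n f(c)$ are all correct. (The paper reaches the eigenfunction faster: since $\sigma_i(T)=\sigma_i(A)$ for $i\le 4$, any $\lambda\in\sigma(T)\setminus\sigma(A)$ with $|\lambda|>\rho(A)$ is automatically an isolated eigenvalue of finite multiplicity, so an exact eigenvector exists at once.) The genuine gap is exactly at the step you flagged, and your proposed repair does not close it. You bound each one-step ratio $\varphi(\varphi^k(c))/\varphi^k(c)$ by $\limsup_{x\to 0^+}\varphi(x)/x$ and then assert that this limsup is ``exactly the quantity governing the corresponding radius of $\sigma(A)$.'' That identification is false in general: $\rho(A)=\lim_n\|w_n(\varphi^n)'\|_\infty^{1/n}$ is an orbitwise geometric-mean quantity, while $\limsup_{x\to0^+}\varphi(x)/x$ is a worst-single-step quantity, and they can differ. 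Concretely, write $x=e^{-y}$, $\varphi(x)=e^{-\psi(y)}$, and let the drift $d(y)=\psi(y)-y$ equal $2$ except on the sparse intervals $[2^j,2^j+1]$, $j\in\mathds{N}$, where it equals $1$ (smoothed so that $\psi'$ stays bounded above and below). Then $\limsup_{x\to0^+}\varphi(x)/x=e^{-1}$, but every orbit meets the slow intervals with frequency $O(\log n/n)$, so $(\varphi^n(c))^{1/n}\to e^{-2}$ and, choosing $w\equiv M$ near $0$ and $|w|$ small near $1$, one gets $\rho(A)=Me^{-2}<Me^{-1}=|w(0)|\limsup_{x\to0^+}\varphi(x)/x$. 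For any $\lambda$ with $Me^{-2}<|\lambda|<Me^{-1}$ your chain of inequalities terminates at $|\lambda|\le Me^{-1}$ and yields no contradiction.

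The workable estimate --- and in essence the one the paper uses --- applies the mean value inequality \emph{once to the $n$-th iterate} instead of iterating the one-step bound: since $f(0)=0$ and $|f'|\le 1$,
\[
|f(\varphi^n(c))|\le\varphi^n(c)=\int_0^c(\varphi^n)'(t)\,dt\le c\,\operatorname{ess\,sup}_{[0,c]}(\varphi^n)',
\]
and because $\varphi^k\to 0$ uniformly on $[0,c]$ while $w$ is continuous and nonvanishing, $|w_n(t)|^{1/n}\to|w(0)|$ uniformly on $[0,c]$; hence for every $\varepsilon>0$,
\[
|w(0)|^n\,\operatorname{ess\,sup}_{[0,c]}(\varphi^n)'\le C_\varepsilon(1+\varepsilon)^n\,\operatorname{ess\,sup}_{[0,c]}|w_n|(\varphi^n)'\le C_\varepsilon(1+\varepsilon)^n\|A^n\|,
\]
so $|w(0)|\limsup_n(\varphi^n(c))^{1/n}\le\rho(A)<|\lambda|$, the desired contradiction (the case $|\lambda|<1/\rho(A^{-1})$ is symmetric, applied to $T^{-1}$). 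This is what the paper encodes by comparing $A$, on functions supported near $0$, with the operator $C$ whose \emph{continuous} weight factor is frozen at $w(0)$, invoking $\rho(A|L_a)=\rho(C|L_a)$ from~\cite[Theorem 3.23]{Ki1}. Note the point your substitute misses: only the continuous factor $w$ may be frozen at the fixed point; the factor $\varphi'$, being merely $L^\infty$, must remain inside the iterated product $(\varphi^n)'$ rather than being replaced by any pointwise or averaged limit at $0$.
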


\begin{proof} Let $\lambda        \in \sigma (T) \setminus \sigma (A)$. By Lemmas~\ref{l1} and ~\ref{l2} either $|\lambda| > \rho (A)$ or $\frac{1}{\lambda } > \rho (A^{-1})$. Assume first that $|\lambda| > \rho (A)$ and $\lambda        \neq        w(0)$. $\lambda$ is an isolated eigenvalue of $T$. Let $f$ be a corresponding eigenvector, then $f(0) =0$. Let $a        \in (0,1)$ be such that $f(a)        \neq        0$. Then $w_n(a)f(\varphi^n(a))=\lambda^n f(a), n        \in \mathds{N}$. Therefore
$$|\lambda| \leq |w(0)| \limsup \limits_n |f(\varphi^n(a))|^{\frac{1}{n}} \leq |w(0)|\limsup \limits_n(\|(f        \circ \varphi^n)^\prime|_{[0,a]} \|_\infty)^{\frac{1}{n}}.$$
Consider the closed subspace $L_a$ of $L^\infty(0,1)$ defined as
$$L_a = \{g        \in L^\infty(0,1) : supp \; g        \subseteq [0,a]\}.$$
Let $Cg = w(0)(g       \circ \varphi ), g        \in L^\infty(0,1)$. Then $L_a$ is invariant for $A$ and $C$ and (see e.g.~\cite[Theorem 3.23]{Ki1}) $\rho (A|L_a) = \rho (C|L_a)$. From here we get
$$|w(0)|\limsup \limits_n(\|(f        \circ \varphi^n)^\prime)|_{[0,a]} \|_\infty)^{\frac{1}{n}} \leq \rho (C|L_a) = \rho (A|L_a) \leq \rho (A) < |\lambda|,$$
a contradiction.

Assume now that $\lambda        \in \sigma (T) \setminus \sigma (A)$, $1/|\lambda| > 1/\rho (A^{-1})$, and $\lambda        \neq        w(1)$. We can bring these assumptions to a contradiction by considering $T^{-1}$ instead of $T$ and applying the same kind of reasoning as in the previous case.
\end{proof}

\begin{remark} \label{r2} The proof of Lemma~\ref{l3} shows that if $\lambda        \in \{w(0), w(1)\} \setminus \sigma (A)$ then the dimension of the corresponding eigenspace is 1.

\end{remark}

\begin{corollary} \label{c2} The spectra $\sigma_1(T)        \subseteq \sigma_2(T)        \subseteq \sigma_3(T)        \subseteq \sigma_4(T)        \subseteq \sigma_5(T)$, as well as
$\sigma_2(T^\prime)$ are rotation invariant.
\end{corollary}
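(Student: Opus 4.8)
The plan is to push everything down to the operator $A$ on $L^\infty(0,1)$ and then to exhibit, for every real $\theta$, a similarity conjugating $A$ into $e^{i\theta}A$. First I would collect the reductions already in hand. For $i=1,2,3,4$ we have shown $\sigma_i(T)=\sigma_i(A)$; by Corollary~\ref{c1}, $\sigma_5(T)=\sigma(A)$, which is rotation invariant by Lemma~\ref{l1}; and by the same reasoning that produced $\sigma_i(T)=\sigma_i(A)$, now applied to the Banach adjoints (adjoints of compact operators are compact, and the codimension-one passage to $X$ induces only a finite-dimensional change of the adjoints, both of which leave $\sigma_2$ of the adjoint unchanged), one gets $\sigma_2(T^\prime)=\sigma_2(A^\prime)$. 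It therefore suffices to prove that $\sigma_i(A)$, $i=1,2,3,4$, and $\sigma_2(A^\prime)$ are rotation invariant.

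The key device is the cohomological equation $u_\theta\circ\varphi=e^{i\theta}u_\theta$, which I would solve by a measurable unimodular $u_\theta\in L^\infty(0,1)$. Since $\varphi(x)<x$ on $(0,1)$, the increasing homeomorphism $\varphi$ acts freely there: fixing $x_0\in(0,1)$, the half-open intervals $\varphi^n\bigl([\varphi(x_0),x_0)\bigr)=[\varphi^{n+1}(x_0),\varphi^n(x_0))$, $n\in\mathds{Z}$, tile $(0,1)$, and setting $u_\theta\equiv e^{in\theta}$ on the $n$-th of them yields a measurable unimodular solution. (This is the measure-theoretic shadow of the fact, used in Lemma~\ref{l1}, that the homeomorphism $\psi$ of $Q$ has no periodic points.) Writing $U_\theta$ for multiplication by $u_\theta$, an invertible isometry of $L^\infty(0,1)$ with inverse multiplication by $\overline{u_\theta}$, a direct computation using $\overline{u_\theta}\,(u_\theta\circ\varphi)=e^{i\theta}$ gives
$$U_\theta^{-1}A\,U_\theta f=\overline{u_\theta}\,w\varphi^\prime\,(u_\theta\circ\varphi)(f\circ\varphi)=e^{i\theta}\,w\varphi^\prime\,(f\circ\varphi)=e^{i\theta}Af,$$
so that $A$ is similar to $e^{i\theta}A$.

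The conclusion is then formal. Conjugation by the invertible $U_\theta$ preserves all semi-Fredholm and Fredholm data (index, nullity, deficiency), so $\lambda I-A$ and $U_\theta^{-1}(\lambda I-A)U_\theta=\lambda I-e^{i\theta}A=e^{i\theta}(e^{-i\theta}\lambda I-A)$ share these invariants, and multiplying by the nonzero scalar $e^{i\theta}$ changes none of them. Hence $\lambda\in\sigma_i(A)$ iff $e^{-i\theta}\lambda\in\sigma_i(A)$ for $i=1,2,3,4$; taking adjoints of the same similarity ($A^\prime$ is similar to $e^{i\theta}A^\prime$) gives the analogous equivalence for $\sigma_2(A^\prime)$. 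Since $\theta$ is arbitrary, all these sets are rotation invariant, and by the first paragraph so are $\sigma_1(T),\ldots,\sigma_5(T)$ and $\sigma_2(T^\prime)$.

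I expect the main obstacle to lie not in the formal conclusion but in two bookkeeping points: verifying that the fundamental-domain construction really produces a well-defined measurable unimodular $u_\theta$ on all of $L^\infty(0,1)$ (so that the similarity $U_\theta^{-1}AU_\theta=e^{i\theta}A$ is legitimate), and carrying the identity $\sigma_2(T^\prime)=\sigma_2(A^\prime)$ cleanly through the similarity $T|X\sim B$, the compact perturbation $B-A$, and the codimension-one passage on the dual side.
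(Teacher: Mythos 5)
Your proof is correct, but it takes a genuinely different route from the paper. The paper's own proof of this corollary is a one-line citation: rotation invariance "follows from~\cite[Theorem 3.2]{Ki3}", i.e.\ it delegates to an external structure theorem for weighted composition operators with aperiodic dynamics. You instead give a self-contained argument: after the same reductions the paper has already made ($\sigma_i(T)=\sigma_i(A)$ for $i=1,2,3,4$ via the compact perturbation $B-A$ and the codimension-one passage, $\sigma_5(T)=\sigma(A)$ from Corollary~\ref{c1} together with Lemma~\ref{l1}, and the adjoint version $\sigma_2(T^\prime)=\sigma_2(A^\prime)$), you solve the cohomological equation $u_\theta\circ\varphi=e^{i\theta}u_\theta$ exactly, by tiling $(0,1)$ with the fundamental domains $[\varphi^{n+1}(x_0),\varphi^n(x_0))$ — legitimate here precisely because the standing assumption $\varphi(x)<x$ on $(0,1)$ makes the action free — and conjugate $A$ into $e^{i\theta}A$ by the unimodular multiplier $U_\theta$; since similarity and multiplication by a nonzero scalar preserve all Fredholm data (nullity, deficiency, closed range, index), every $\sigma_i(A)$ and $\sigma_2(A^\prime)$ is rotation invariant. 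What the paper's citation buys is brevity and generality (the quoted theorem covers the $C(Q)$/Gelfand-compact setting without any explicit cocycle); what your argument buys is transparency and independence from~\cite{Ki3}, and it is in fact the exact-solution counterpart of the technique the paper itself uses later in Theorem~\ref{t6}(1), where the weaker hypothesis $mes(F)=0$ only permits an approximate unimodular solution supplied by~\cite[Theorem B.1]{AAK}. Your two flagged bookkeeping points are indeed the only places requiring care, and both check out: $u_\theta$ is constant on countably many half-open intervals partitioning $(0,1)$, hence measurable and unimodular, and $\sigma_2$ of the adjoint is stable under compact perturbation and finite-codimensional restriction because semi-Fredholmness and infinite deficiency are.
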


\begin{proof} Follows from~\cite[Theorem 3.2]{Ki3}

\end{proof}

\begin{corollary} \label{c3} Let $a        \in (0,1)$, $L_a = \{f        \in L^\infty(0,1) : supp f        \subseteq [0,a]\}$, and $R_a = \{f        \in L^\infty(0,1) : supp f        \subseteq [a,1] \}$.

\noindent (1) Assume that $\lambda        \in \mathds{C}$ and $\rho (A|L_a) < |\lambda| < 1/\rho (A^{-1}|R_a)$. Then the operator $\lambda I - T$ has the left inverse; in particular, $\lambda        \in \sigma_2(T) \setminus \sigma_1(T)$.

\noindent (2) Assume that $\lambda        \in \mathds{C}$ and $\rho (A|L_a) > |\lambda| > 1/\rho (A^{-1}|R_a)$. Then the operator $\lambda I - T$ has the right inverse; in particular, $(\lambda I -T) Lip_1(0,1) = Lip_1(0,1)$ and $\lambda        \in \sigma_2(T^\prime) \setminus \sigma_1(T)$.

\end{corollary}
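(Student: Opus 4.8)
The plan is to reduce the assertion to the operator $A$ on $L^\infty(0,1)$ and to read off the one-sided inverse of $\lambda I-A$ from its block structure relative to the decomposition $L^\infty(0,1)=L_a\oplus R_a$. Since $\varphi(x)<x$ on $(0,1)$, a direct support computation shows $A(R_a)\subseteq R_a$ and $A^{-1}(L_a)\subseteq L_a$, so with respect to $L_a\oplus R_a$ the operator $A$ is lower triangular,
\[
 A=\begin{pmatrix} A_{11} & 0\\ A_{21} & A_{22}\end{pmatrix},
\]
where $A_{11}$ is the operator induced by $A$ on the quotient $L^\infty/R_a$ and $A_{22}=A|R_a$. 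By the definition of the local spectral radii one has $\rho(A_{11})=\rho(A|L_a)$, while the compression $(A^{-1})_{22}$ of $A^{-1}$ to $R_a$ is a left inverse of $A_{22}$ with $\rho((A^{-1})_{22})=\rho(A^{-1}|R_a)$. Moreover, exactly as in Lemma~\ref{l3}, both radii are governed by the fixed points, $\rho(A|L_a)=|w(0)|\varphi'(0)$ and $1/\rho(A^{-1}|R_a)=|w(1)|\varphi'(1)$, so the hypotheses of (1) and (2) place $\lambda$ strictly inside $\sigma(A)$, the two cases being distinguished by whether $|w(0)|\varphi'(0)$ or $|w(1)|\varphi'(1)$ is the inner radius.

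For (1) the condition $|\lambda|>\rho(A|L_a)=\rho(A_{11})$ makes $\lambda I-A_{11}$ invertible through the forward Neumann series $\sum_{n\ge0}\lambda^{-n-1}A_{11}^{\,n}$, while $|\lambda|<1/\rho(A^{-1}|R_a)=1/\rho((A^{-1})_{22})$ makes $I-\lambda(A^{-1})_{22}$ invertible; since $(A^{-1})_{22}(\lambda I-A_{22})=-(I-\lambda(A^{-1})_{22})$, the block $\lambda I-A_{22}$ is left invertible (bounded below with complemented range) but not onto, its cokernel being infinite dimensional because $A_{22}$ already misses the functions supported on $[a,\varphi^{-1}(a)]$. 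A lower triangular operator with invertible $(1,1)$ entry and left invertible $(2,2)$ entry is left invertible, so $\lambda I-A$ has a left inverse with $\mathrm{nul}=0$ and $\mathrm{def}=\infty$. Statement (2) is the mirror image: then $|\lambda|>1/\rho(A^{-1}|R_a)=\rho(A_{22})$ puts $\lambda$ outside the disc $\sigma(A_{22})$, so $\lambda I-A_{22}$ is invertible, whereas $|\lambda|<\rho(A|L_a)=\rho(A_{11})$ makes $\lambda I-A_{11}$ right invertible with infinite dimensional kernel; the same triangular bookkeeping yields a right inverse of $\lambda I-A$ with $\mathrm{nul}=\infty$ and $\mathrm{def}=0$.

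To pass from $A$ to $T$ I use the reduction recalled before the lemmas: $T|X$ (with $X=\{f:f(0)=0\}$, of codimension one in $Lip_1(0,1)$) is similar to $B=A+K$ with $K$ compact. One-sided invertibility is not stable under compact perturbation, so I argue in two steps. First, compact invariance of the index gives that $\lambda I-B$ is semi-Fredholm of the same (infinite signed) index as $\lambda I-A$; in case (1) this forces $\mathrm{def}=\infty$ and $\mathrm{nul}<\infty$. Second, I show the kernel is actually trivial: an eigenvector $f$ of $T$ at $\lambda$ would satisfy $w_n(c)f(\varphi^n(c))=\lambda^n f(c)$, and letting $\varphi^n(c)\to0$ with $|f(\varphi^n(c))|\le\|f'\|_\infty\varphi^n(c)$ forces $|\lambda|\le\rho(A|L_a)$, against the hypothesis of (1); the backward estimate, using $|\lambda|<1/\rho(A^{-1}|R_a)$, excludes the remaining candidate values $w(0),w(1)$. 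Hence $\lambda I-B$ is bounded below, and since its range is isomorphic to the injective space $L^\infty(0,1)$ it is automatically complemented, so $\lambda I-B$, and therefore $\lambda I-T|X$ and (after the codimension-one adjustment) $\lambda I-T$, has a genuine left inverse. The stated membership in the essential spectra then follows from the definitions: left invertibility means $\mathrm{nul}=0$, $\mathrm{def}=\infty$, so $\lambda\in\sigma_2(T')\setminus\sigma_1(T)$, while in case (2) right invertibility gives $\mathrm{nul}=\infty$, $\mathrm{def}=0$ and $(\lambda I-T)Lip_1(0,1)=Lip_1(0,1)$, so $\lambda\in\sigma_2(T)\setminus\sigma_1(T)$.

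The delicate point is precisely this transfer. Membership in an essential spectrum is compact-perturbation invariant and would follow at once from $\sigma_i(T)=\sigma_i(A)$, but genuine one-sided invertibility is not; the argument must therefore combine the explicit triangular inverse for $\lambda I-A$ with the independent verification that $\lambda I-T$ has trivial kernel (for (1)) or full range (for (2)). It is the simultaneous use of both inequalities $\rho(A|L_a)<|\lambda|$ and $|\lambda|<1/\rho(A^{-1}|R_a)$ — one controlling the forward orbit near $0$, the other the backward orbit near $1$ — that I expect to carry the real weight of the proof, together with the observation that the injectivity of the $L^\infty$-range is what upgrades ``bounded below'' to ``left invertible''.
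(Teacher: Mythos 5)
The paper states this corollary without proof, so there is no argument to compare against line by line; judged on its own terms, your proposal is sound exactly where it treats case (1) at the level of $A$: the lower-triangular structure on $L_a\oplus R_a$, the Neumann series for $\lambda I-A_{11}$, the left inverse $-(I-\lambda(A^{-1})_{22})^{-1}(A^{-1})_{22}$ of $\lambda I-A_{22}$, and the triangular assembly are all correct. Note also that your final memberships --- case (1) giving $\sigma_2(T^\prime)\setminus\sigma_1(T)$, case (2) giving $\sigma_2(T)\setminus\sigma_1(T)$ --- are the only ones compatible with the definitions (a left-invertible operator has trivial kernel, so it cannot lie in $\sigma_2(T)\setminus\sigma_1(T)$); the labels printed in the corollary are interchanged, as Lemmas~\ref{l4} and~\ref{l5} confirm, so you are in fact proving a corrected statement while calling it ``the stated membership'' --- this should have been flagged. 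The decisive gap is case (2). It rests on the identity $1/\rho(A^{-1}|R_a)=\rho(A_{22})$, which is false: $\rho((A^{-1})_{22})$ is the supremum over orbits of the backward expansion rates near $1$, so $1/\rho((A^{-1})_{22})$ is the \emph{infimum} of the near-$1$ rates of $A$, while $\rho(A_{22})$ is their \emph{supremum}, and in the $Lip_1$ setting these rates vary from orbit to orbit. In the paper's own Example~\ref{e3} one has $\rho(A_{22})=4$ but $1/\rho((A^{-1})_{22})=2$; the same conflation underlies your side claim $\rho(A|L_a)=|w(0)|\varphi^\prime(0)$, which is meaningless for $\varphi\in Lip_1$ since $\varphi^\prime(0)$ need not exist. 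The failure is fatal, not cosmetic: in Example~\ref{e4}, $\rho(A|L_a)=2$ and $1/\rho(A^{-1}|R_a)=1$, so $|\lambda|=3/2$ satisfies the hypothesis of (2), yet $(3/2)\Gamma\subseteq\sigma_1(T)$ by item (4) of that very example, so $\lambda I-T$ is not even semi-Fredholm there and no right inverse can exist. Under the hypothesis as you (and the corollary) state it, an orbit class whose near-$0$ rate lies below $|\lambda|$ and whose near-$1$ rate lies above $|\lambda|$ can coexist with the two inequalities and destroys surjectivity; a correct case (2) must use the genuine restrictions $\rho(A|R_a)$ and $\rho(A^{-1}|L_a)$, which are different numbers from the two appearing in case (1).

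There are also two gaps in the passage from $A$ to $T$. In case (1), your exclusion of eigenvectors uses the forward estimate, which requires $f(0)=0$ and hence $\lambda\neq w(0)$, and the backward estimate, which requires $f(1)=0$ and hence $\lambda\neq w(1)$; when $\lambda=w(0)=w(1)$ neither applies, and the conclusion genuinely fails: in Example~\ref{e3} (where $w\equiv1$) the value $\lambda=1$ satisfies the hypothesis of (1), namely $1/2<1<2$, but the constant functions lie in $\ker(I-T)$, so $I-T$ has no left inverse --- only the membership $1\in\sigma_2(T^\prime)\setminus\sigma_1(T)$ survives. So even the corrected statement needs this exceptional case excluded, and your proof passes over it (your handling of the codimension-one step likewise silently assumes $\lambda\neq w(0)$, since the induced operator on $Lip_1(0,1)/X$ is multiplication by $w(0)$). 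In case (2), you correctly observe that surjectivity of $\lambda I-T$, unlike membership in $\sigma_2(T)$, is not preserved by the compact perturbation $B=A+K$ or by the codimension-one extension and must be verified independently --- but that verification never appears; the right-inverse half of the corollary, including $(\lambda I-T)Lip_1(0,1)=Lip_1(0,1)$, is left unproved even in the regime where it is true.
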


We will now refine the result stated in Corollary~\ref{c3}. We start with considering the case $w        \equiv        1$ when $T$ is an automorphism of $Lip_1(0,1)$.

\begin{lemma} \label{l4} Let $\varphi$ be a homeomorphism of $[0,1]$ onto itself such that $\varphi$ is an invertible element of the algebra $Lip_1(0,1)$ and $\varphi (x) <  x$ on $(0,1)$. Let $Tf = f       \circ \varphi, f        \in Lip_1(0,1)$. Then

\noindent (1) $\sigma_1(T) = \sigma_2(T)$

\noindent (2)  Let $0 < |\lambda| < 1$. Then $ \lambda        \in \sigma_2(T^\prime) \setminus \sigma_1(T)$ if and only if there are Lebesgue measurable disjoint subsets $E_1$ and $E_2$ of $[0,1]$, $\varepsilon > 0$, and $n        \in \mathds{N}$ such that

(a) $0 < mes(E_1) \leq 1$,

(b) $E_1        \cup        E_2 = [0,1]$,

(c) $\varphi (E_i) = E_i, i=1,2$,

(d) $(\varphi^n)^\prime < (|\lambda| - \varepsilon )^n$ a.e. on $E_2$,

(e)  $(\varphi^n)^\prime < (|\lambda| - \varepsilon )^n$ a.e. on $E_1        \cap [0,1/2]$, and

(f) $(\varphi^{-n})^\prime < (|\lambda| + \varepsilon )^{-n}$ a.e. on $E_1        \cap [1/2,1]$.

\noindent (3)  Let $|\lambda| >1$. Then $ \lambda        \in \sigma_2(T^\prime) \setminus \sigma_1(T)$ if and only if there are Lebesgue measurable disjoint subsets $E_1$ and $E_3$ of $[0,1]$, $\varepsilon > 0$, and $n        \in \mathds{N}$ such that

(a) $0 < mes(E_1) \leq 1$,

(b) $E_1        \cup        E_3 = [0,1]$,

(c) $\varphi (E_i) = E_i, i=1,2$,

(d) $(\varphi^n)^\prime > (|\lambda| + \varepsilon )^n$ a.e. on $E_3$,

(e)  $(\varphi^n)^\prime < (|\lambda| - \varepsilon )^n$ a.e. on $E_1        \cap [0,1/2]$, and

(f) $(\varphi^{-n})^\prime < (|\lambda| + \varepsilon )^{-n}$ a.e. on $E_1        \cap [1/2,1]$.

\noindent (4) Let $|\lambda| = 1$. Then $ \lambda        \in \sigma_2(T^\prime) \setminus \sigma_1(T)$ if and only if $mes(E_1) = 1$, i.e. there are $\varepsilon > 0$, and $n        \in \mathds{N}$ such that

(a)  $(\varphi^n)^\prime < (|\lambda| - \varepsilon )^n$ a.e. on $ [0,1/2]$, and

(b)  $(\varphi^{-n})^\prime < (|\lambda| +  \varepsilon )^{-n}$ a.e. on $ [1/2,1]$.

\end{lemma}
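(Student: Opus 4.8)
The plan is to analyze the automorphism $T f = f \circ \varphi$ on $Lip_1(0,1)$ together with its similar model $A$ (which for $w \equiv 1$ is the composition operator $Sf = \varphi' \, f \circ \varphi$ on $L^\infty(0,1)$), using the spectral decomposition machinery already available from~\cite{Ki1} and summarized in Corollary~\ref{c3}. The key identity I would exploit is $\sigma_2(T') \setminus \sigma_1(T) = \{\lambda : \lambda I - T \text{ is semi-Fredholm, } def < \infty, \, nul = \infty\}$, so the characterization amounts to deciding when $\lambda I - T$ is right-invertible (surjective with finite-codimensional kernel of the adjoint) but not left-invertible. Since $\sigma_1(T) = \sigma_2(T) \cap \sigma_2(T')$ and part (1) asserts $\sigma_1(T) = \sigma_2(T)$, the set $\sigma_2(T') \setminus \sigma_1(T)$ is exactly $\sigma_2(T') \setminus \sigma_2(T)$, i.e. the "right-invertible but not invertible" locus.

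\textbf{Proof of (1).}
First I would establish $\sigma_1(T) = \sigma_2(T)$, equivalently $nul(\lambda I - T) = 0$ whenever $\lambda I - T$ is semi-Fredholm. The point is that $T = \varphi^*$ is an \emph{automorphism} of the semisimple commutative Banach algebra $Lip_1(0,1)$; any eigenvector $f$ with $Tf = \lambda f$ satisfies $f(\varphi(x)) = \lambda f(x)$, and since $\varphi(x) < x$ forces every orbit to converge to the fixed point $0$, evaluating the functional equation along orbits and using $|\lambda| \neq 1$ (the case $|\lambda|=1$ being excluded from $\sigma_1$ by the Kamowitz–Scheinberg argument of Lemma~\ref{l2}) shows $f \equiv 0$. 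Thus kernels are trivial off the critical circle, giving $\sigma_1(T) = \sigma_2(T)$.

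\textbf{Proof of (2)–(4): the decomposition.}
For the main equivalences I would pass to the model operator $S$ on $L^\infty(0,1)$ and invoke the spectral splitting theorem~\cite[Theorem 3.10]{Ki1}: $\lambda \in \sigma_2(T') \setminus \sigma_1(T)$ should correspond to a $\varphi$-invariant measurable partition of $[0,1]$ on which the Lyapunov-type quantity $((\varphi^n)')^{1/n}$ separates cleanly from $|\lambda|$. Concretely, I would show the stated conditions (d)–(f) are exactly the requirement that $\lambda I - T$ be right-invertible via the construction in Corollary~\ref{c3}(2): on the "expanding" piece $E_2$ (or $E_3$) one inverts using the forward iterates, while on $E_1$ the two half-interval conditions (e),(f) guarantee a bounded right inverse built by summing a geometric series of composition operators. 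The endpoint cases are then forced: $|\lambda|<1$ needs a contracting complement $E_2$, $|\lambda|>1$ an expanding complement $E_3$, and $|\lambda|=1$ leaves no room for a nontrivial complement, so $mes(E_1)=1$. The conditions restricted to $[0,1/2]$ and $[1/2,1]$ reflect that near the attracting fixed point $0$ the derivative contracts and near the repelling fixed point $1$ it expands; the midpoint $1/2$ is merely a convenient separator between the two basins.

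\textbf{Main obstacle.}
The hard part will be the sufficiency direction in (2)–(3): given the measurable sets with the derivative estimates, I must actually manufacture a bounded right inverse of $\lambda I - T$ on $Lip_1(0,1)$, not merely on $L^\infty$. The subtlety is that inverting at the level of $n$-th derivatives (where the compact perturbation $B - A$ was discarded) must be lifted back to a genuine $Lip_1$ solution, which requires controlling the integration constants so that the resulting function lies in $Lip_1(0,1)$ and the formal geometric series $\sum \lambda^{-k} T^{k}$ (or its backward analogue on the expanding set) converges in operator norm; the estimates (d)–(f) with the strict $\varepsilon$-gaps are precisely what is needed to make these series converge, and verifying this convergence uniformly across the partition is where the real work lies.
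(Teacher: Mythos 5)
Your proposal has two genuine problems, and the first one is fatal to the plan as written. You identify $\sigma_2(T')\setminus\sigma_1(T)$ with $\{\lambda:\lambda I-T \text{ semi-Fredholm},\ nul=\infty,\ def<\infty\}$, i.e.\ the ``right-invertible'' locus. This is backwards. By the definition adopted in the introduction, $\sigma_2(T')=\sigma_1(T)\cup\{\lambda\in\sigma(T):\lambda I-T \text{ semi-Fredholm and } def(\lambda I-T)=\infty\}$, so every point of $\sigma_2(T')\setminus\sigma_1(T)$ has $def=\infty$ and hence $nul<\infty$: these are the points where $\lambda I-T$ is injective (up to finite dimensions) with closed range of \emph{infinite codimension}. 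Your reading is not a harmless relabeling: if $\lambda I-T$ were semi-Fredholm with $nul=\infty$, then $\lambda\in\sigma_2(T)$, which by part (1) equals $\sigma_1(T)$, so under your reading the set treated in parts (2)--(4) would be empty --- contradicting Example 3.9 of the paper, where $\sigma_2(T')\setminus\sigma_1(T)$ fills most of an annulus. Consequently your sufficiency strategy proves the wrong thing: manufacturing a bounded \emph{right} inverse of $\lambda I-T$ establishes surjectivity, i.e.\ $def=0$, which shows $\lambda\notin\sigma_2(T')$. What conditions (d)--(f) actually deliver is: on the complementary invariant set the restriction of $A$ has $\lambda$ off its annulus, hence is invertible by a geometric series, while on $E_1$ the contraction near $0$ and expansion near $1$ (orbits flow from $1$ toward $0$) make $\lambda I-A$ injective with closed range and infinite-dimensional cokernel. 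It is a left inverse together with an infinite-defect statement that must be produced, not a right inverse. (Corollary 3.8 of the paper, which you lean on, should be read with care on exactly this point; the definition in the introduction is what governs.)

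The second problem is that the ``only if'' half --- the substantive half --- is not addressed. Corollary 3.8 and the splitting theorem of [Ki1] you cite go only in the ``if'' direction: they draw spectral conclusions from given radii or splitting data, but they cannot produce, from the bare hypothesis $\lambda\in\sigma_2(T')\setminus\sigma_1(T)$, the invariant measurable sets $E_i$, the uniform $\varepsilon$-gap valid for a single fixed $n$, the asymmetric half-interval conditions (e),(f), or the forced equality $mes(E_1)=1$ when $|\lambda|=1$. The paper obtains all of this by passing to the Gelfand compact $Q$ of $L^\infty(0,1)$, where $T$ becomes a weighted composition $\hat{T}$ on $C(Q)$, invoking the characterization from [Ki3] of $\sigma_2(\hat{T}')\setminus\sigma_1(\hat{T})$ by a decomposition $Q=F_2\cup F_3\cup\bigcup_{n\in\mathds{Z}}\hat{\varphi}^n(F_1)$ with $F_1$ clopen and wandering and $F_2,F_3$ invariant with $\sigma(\hat{T},C(F_i))$ strictly inside, respectively outside, the circle $|\lambda|\Gamma$, and then applying the Kamowitz--Scheinberg theorem to the $T$-invariant subalgebras $L_i=\{f\in Lip_1(0,1): supp\,\widehat{f'}\subseteq F_i\}$ to force $F_2$ (when $|\lambda|\le 1$) or $F_3$ (when $|\lambda|\ge 1$) to be nowhere dense, i.e.\ to correspond to a Lebesgue-null set; this collapse is exactly what separates cases (2), (3), (4). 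Your only appeal to Kamowitz--Scheinberg is the parenthetical in part (1), where it does not do what you claim: the theorem says the spectrum of a non-periodic automorphism of a semisimple commutative Banach algebra contains $\Gamma$; it does not exclude $|\lambda|=1$ from $\sigma_2(T)\setminus\sigma_1(T)$. The paper's argument for part (1) is simpler and complete: $\sigma_2(T)\setminus\sigma_1(T)$ is open and consists of eigenvalues of $T$, so if nonempty it contains an eigenvalue with $|\lambda|\neq 1$, contradicting the fact that $T$ is an isometry of $C[0,1]$. Your orbit argument covers $|\lambda|\neq 1$ but, as written, leaves the unit-circle case to an invalid appeal.
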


\begin{proof} (1) Assume to the contrary that the set $\sigma_2(T) \setminus \sigma_1(T)$ is not empty. This set is open in $\mathds{C}$ and consists of eigenvalues of $T$. Thus there are a $\lambda        \in \mathds{C}$ and $f        \in Lip_1(0,1)$, $f        \neq        0$, such that $|\lambda|        \neq        1$ and $Tf =  \lambda f$; but that obviously contradicts to $T$ being an isometry of $C[0,1]$.

\noindent (2) $\lambda        \in \sigma_2(T^\prime) \setminus \sigma_1(T)$ if and only if $\lambda        \in \sigma_2(\hat{T}^\prime) \setminus \sigma_1(\hat{T})$.
Recall that $\hat{T}$ is an invertible weighted composition operator on $C(Q)$ defined as
$$ \hat{T}f = \widehat{\varphi^\prime} (f       \circ \hat{\varphi }), f        \in C(Q).$$
Then (see~\cite{Ki3}) the statement that $\lambda        \in \sigma_2(\hat{T}^\prime) \setminus \sigma_1(\hat{T})$ is equivalent to the following. There are closed disjoint subsets $F_i, i=1,2,3$ of $Q$ such that
\begin{itemize}
  \item $F_1$ is clopen and the sets $\hat{\varphi }^n(F_1), n        \in \mathds{Z}$ are pairwise disjoint.
  \item $\hat{\varphi }(F_i) = F_i, i=2,3$.
  \item $Q = F_2        \cup        F_3        \cup        \bigcup \limits_{n=-\infty}^\infty \hat{\varphi }^n(F_1)$.
  \item $\sigma (\hat{T},C(F_2))        \subset \{\xi        \in \mathds{C} : |\xi| < |\lambda|\}$.
  \item $\sigma (\hat{T},C(F_3))        \subset \{\xi        \in \mathds{C} : |\xi| > |\lambda|\}$.
\end{itemize}
Next notice that at least one of the sets $F_2$ or $F_3$ must be nowhere dense in $Q$. Indeed, otherwise let $E_2$ and $E_3$ be the corresponding subsets of positive Lebesgue measure in $[0,1]$. Let $L_i = \{f        \in Lip_1(0,1) : supp \widehat{f^\prime}        \subseteq F_i, i=2,3 \}$. Then $L_i$ is a closed commutative $T$-invariant subalgebra of $Lip_1(0,1)$ and $\sigma (T|L_i) = \sigma (\hat{T},C(F_i)), i=2,3$ in contradiction with the Kamowitz - Scheinberg theorem.

Assume that $|\lambda| \leq 1$ then applying again the Kamowitz - Scheinberg theorem we see that $F_2$ must be nowhere dense in $Q$, while if $|\lambda| \geq 1$ then $ int F_3 =\emptyset$.
\end{proof}

The following example illustrates the statement of Lemma~\ref{l4}

\begin{example} \label{e3} We define the sequences $\{a_n\}, \{b_n\}, n        \in \mathds{Z}$ of numbers in $(0,1)$ in the following way.

$a_n = \begin{cases} \frac{1}{2^{n+1}} &\mbox{if } n \geq 0 \\
1-\frac{1}{2^{|n|+1}} & \mbox{if } n < 0. \end{cases} $,

$b_n = \begin{cases} \frac{1}{2^{n+2}} + \frac{1}{4^{n+2}} &\mbox{if } n \geq 0 \\
1-\frac{1}{2^{|n|+1}} - \frac{1}{4^{|n|+1}} & \mbox{if } n < 0. \end{cases} $.

\noindent We define the homeomorphism $\varphi$ of $[0,1]$ onto itself as follows.
$$\varphi (a_n) = a_{n+1}, \; \varphi (b_n) = b_{n+1}, n        \in \mathds{Z}, $$
\centerline{ $\varphi$ is linear on intervals $[b_n, a_n]$ and $[a_{n-1}, b_n]$, $ n        \in \mathds{Z}$,}
\centerline{$\varphi (0) = 0$ and $\varphi (1) =1$.}
Clearly $\varphi (x) < x, x        \in (0,1)$.

It is immediate that the operator $T$, $Tf = f        \circ \varphi$ is an automorphism of $Lip_1(0,1)$ and that
$$ \sigma (T) = \sigma_2(T^\prime) = \sigma_3(T) = \sigma_4(T) = \sigma_5(T) = \{\lambda        \in \mathds{C} : 1/4 \leq |\lambda| \leq 4 \}. $$
while
$$ \sigma_1(T) = \sigma_2(T) = (1/4)\Gamma        \cup        (1/2)\Gamma        \cup        2\Gamma        \cup        4\Gamma.$$

Let $\lambda        \in \sigma_2(T^\prime) \setminus \sigma_1(T)$ and consider three possibilities.

(a) $1/4 < |\lambda| < 1/2$. Then the sets $A = \bigcup \limits_{-\infty}^\infty [a_{n-1}, b_n)$ and $B = \bigcup \limits_{-\infty}^\infty [a_n, b_n)$ correspond to the sets $E_1$ and $E_2$ from part (2) of the statement of Lemma~\ref{l4}, respectively.

(b) $2 < |\lambda| < 4$. Then the sets $A$ and $B$ correspond to the sets $E_3$ and $E_1$ from part (3) of the statement of Lemma~\ref{l4}, respectively.

(c) $1/2 < |\lambda| < 2$. Then $E_1 = [0,1]$.

\end{example}

Next we will extend the statement of Lemma~\ref{l4} on \textit{weighted} automorphisms of $Lip_1(0,1)$. The case when $\varphi (x) < x$ on $(0,1)$ and $|w(0)| \leq |w(1)|$ does not require any substantial additional effort.

\begin{lemma} \label{l5} Let $\varphi$ be a homeomorphism of $[0,1]$ onto itself such that $\varphi$ is an invertible element of the algebra $Lip_1(0,1)$ and $\varphi (x) <  x$ on $(0,1)$. Let $w$ be an invertible element of $Lip_1(0,1)$ such that $|w(0)| \leq |w(1)|$. Let $Tf = w(f        \circ \varphi ), f        \in Lip_1(0,1)$. Then

\noindent (1) $\sigma_1(T) = \sigma_2(T)$

\noindent (2)  Let $0 < |\lambda| < |w(0)|$. Then $ \lambda        \in \sigma_2(T^\prime) \setminus \sigma_1(T)$ if and only if there are Lebesgue measurable disjoint subsets $E_1$ and $E_2$ of $[0,1]$, $\varepsilon > 0$, and $n        \in \mathds{N}$ such that

(a) $0 < mes(E_1) \leq 1$,

(b) $E_1        \cup        E_2 = [0,1]$,

(c) $\varphi (E_i) = E_i, i=1,2$,

(d) $|w_n| (\varphi^n)^\prime < (|\lambda| - \varepsilon )^n$ a.e. on $E_2$,

(e)  $|w(0)|^n(\varphi^n)^\prime < (|\lambda| - \varepsilon )^n$ a.e. on $E_1        \cap [0,1/2]$, and

(f) $|w(1)|^{-n}(\varphi^{-n})^\prime < (|\lambda| +  \varepsilon )^{-n}$ a.e. on $E_1        \cap [1/2,1]$.

\noindent (3)  Let $|\lambda| >|w(1)|$. Then $ \lambda        \in \sigma_2(T^\prime) \setminus \sigma_1(T)$ if and only if there are Lebesgue measurable disjoint subsets $E_1$ and $E_3$ of $[0,1]$, $\varepsilon > 0$, and $n        \in \mathds{N}$ such that

(a) $0 < mes(E_1) \leq 1$,

(b) $E_1        \cup        E_3 = [0,1]$,

(c) $\varphi (E_i) = E_i, i=1,2$,

(d) $|w_n|(\varphi^n)^\prime > (|\lambda| + \varepsilon )^n$ a.e. on $E_3$,

(e)  $|w(0)|^n(\varphi^n)^\prime < (|\lambda| - \varepsilon )^n$ a.e. on $E_1        \cap [0,1/2]$, and

(f) $|w(1)|^{-n}(\varphi^{-n})^\prime < (|\lambda| + \varepsilon )^{-n}$ a.e. on $E_1        \cap [1/2,1]$.

\noindent (4) Let $|w(0)| \leq|\lambda| \leq |w(1)|$. Then $ \lambda        \in \sigma_2(T^\prime) \setminus \sigma_1(T)$ if and only if $mes(E_1) = 1$, i.e. there are $\varepsilon > 0$, and $n        \in \mathds{N}$ such that

(a)  $|w(0)|^n(\varphi^n)^\prime < (|\lambda| - \varepsilon )^n$ a.e. on $ [0,1/2]$, and

(b)  $|w(1)|^{-n}(\varphi^{-n})^\prime < (|\lambda| +  \varepsilon )^{-n}$ a.e. on $ [1/2,1]$.

\end{lemma}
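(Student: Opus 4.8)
The plan is to reduce everything, exactly as in the proof of Lemma~\ref{l4}, to the Gelfand model $\hat{A}$ on $C(Q)$. The reductions established before Lemma~\ref{l1} give $\sigma_i(T) = \sigma_i(A) = \sigma_i(\hat{A})$ for $i=1,2,3$; combining this with the general identities $\sigma_1(T) = \sigma_2(T) \cap \sigma_2(T^\prime)$ and $\sigma_3(T) = \sigma_2(T) \cup \sigma_2(T^\prime)$ also yields $\sigma_2(T^\prime) = \sigma_2(A^\prime) = \sigma_2(\hat{A}^\prime)$. Thus all four assertions become statements about the single weighted composition operator $\hat{A}$ on $C(Q)$, to which the structural description of $\sigma_2(\hat{A}^\prime) \setminus \sigma_1(\hat{A})$ quoted from~\cite{Ki3} in the proof of Lemma~\ref{l4} applies verbatim: membership is equivalent to the existence of closed disjoint sets $F_1$ (clopen, with pairwise disjoint iterates), $F_2$, $F_3$ (both $\hat{\varphi}$-invariant), covering $Q$, with $\sigma(\hat{A},C(F_2)) \subset \{|\xi| < |\lambda|\}$ and $\sigma(\hat{A},C(F_3)) \subset \{|\xi| > |\lambda|\}$.

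For part (1) I would argue as in Lemma~\ref{l4}(1), replacing the isometry property (now unavailable) by a growth estimate at the fixed points. A point of $\sigma_2(T) \setminus \sigma_1(T)$ is an eigenvalue of infinite multiplicity, so it suffices to rule out, on an open $\lambda$-set, eigenvectors $f \in Lip_1(0,1)$, $f \neq 0$, with $w(f \circ \varphi) = \lambda f$. Evaluating at the fixed points forces $\lambda = w(0)$ if $f(0) \neq 0$ and $\lambda = w(1)$ if $f(1) \neq 0$; in the remaining case $f(0) = f(1) = 0$ the relation $w_n(a) f(\varphi^n(a)) = \lambda^n f(a)$, together with $\varphi^n(a) \to 0$, $\varphi^{-n}(a) \to 1$, the continuity of $w$, and the Lipschitz bound on $f$, confines $|\lambda|$ to $\sigma(A)$. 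Since neither $\{|w(0)|,|w(1)|\}$ nor $\sigma(A)$ contains a nonempty open subset of the plane, the open set $\sigma_2(T) \setminus \sigma_1(T)$ must be empty, the degenerate possibility of a surviving invariant set of positive measure being excluded by the Kamowitz--Scheinberg argument of Lemma~\ref{l2}.

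For parts (2)--(4) the programme is to convert the abstract spectral conditions on $C(F_2)$ and $C(F_3)$ into the pointwise almost-everywhere inequalities (d)--(f). First, exactly as in Lemma~\ref{l4}, the Kamowitz--Scheinberg theorem forces at least one of $F_2, F_3$ to be nowhere dense: when $|\lambda| < |w(0)|$ the expanding part $F_3$ disappears (case (2)); when $|\lambda| > |w(1)|$ the contracting part $F_2$ disappears (case (3)); and when $|w(0)| \le |\lambda| \le |w(1)|$ both disappear and the wandering part $F_1$ exhausts $[0,1]$ (case (4)). The thresholds $|w(0)|$ and $|w(1)|$ enter precisely because they are the magnitudes of the fixed-point eigenvalues isolated in Lemma~\ref{l3}. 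Writing $E_1, E_2, E_3$ for the measurable subsets of $[0,1]$ corresponding to $F_1, F_2, F_3$, the condition $\sigma(\hat{A},C(F_2)) \subset \{|\xi| < |\lambda|\}$ translates into $|w_n|(\varphi^n)^\prime < (|\lambda|-\varepsilon)^n$ a.e. on $E_2$ via the spectral-radius formula for weighted composition operators, and symmetrically for $F_3$, giving condition (d) in each case.

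The heart of the matter, and the step I expect to be the main obstacle, is the analysis on the wandering set $E_1$, which yields conditions (e) and (f). Because $\varphi(x) < x$, forward orbits of points of $E_1$ accumulate at the fixed point $0$ and backward orbits at $1$; hence along a forward orbit the product $w_n$ is asymptotically $w(0)^n$ and along a backward orbit $w(1)^{-n}$, the continuity of $w$ supplying the estimates. Splitting $E_1$ at $1/2$ separates the portion whose relevant accumulation is forward (governed by $|w(0)|$, giving (e)) from the portion whose accumulation is backward (governed by $|w(1)|$, giving (f)). The delicate points will be to make the replacement of $w_n$ by $w(0)^n$ and $w(1)^n$ uniform enough to preserve the strict inequalities with a single fixed $\varepsilon$ and a single $n$, and to verify that the three cases (2)--(4) are mutually exclusive and exhaustive and match the geometry of the covering of $Q$; the converse implications, that the stated a.e. inequalities actually produce admissible sets $F_1, F_2, F_3$, then follow by reversing these estimates.
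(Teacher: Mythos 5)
Your reduction to $\hat{A}$ on $C(Q)$ and to the $F_1,F_2,F_3$ decomposition quoted from~\cite{Ki3} is indeed the paper's skeleton, but your proposal omits the one ingredient that the paper's proof of this lemma actually adds, and it is exactly the point where you invoke Kamowitz--Scheinberg. For $w\not\equiv 1$ the restriction of $T$ to an invariant subalgebra is a \emph{weighted} automorphism, not an automorphism, so the Kamowitz--Scheinberg theorem does not apply ``exactly as in Lemma~\ref{l4}'', and in particular it cannot produce the thresholds $|w(0)|$ and $|w(1)|$. The paper replaces it by the following fact, which is essentially the whole content of its proof: if $E$ is a closed $\varphi$-invariant subset of $[0,1]$ with $mes(E)>0$ and $L_E=\{f\in Lip_1(0,1): supp\, f\subseteq E\}$, then $\sigma(T,L_E)\supseteq w(0)\Gamma\cup w(1)\Gamma$. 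This is proved there by a three-case comparison of $|w(0)|$ with $\rho(\hat{T},C(\hat{E}))$ and $1/\rho(\hat{T}^{-1},C(\hat{E}))$, each case ending in a contradiction via a.e.\ growth estimates on $(\varphi^n)^\prime$ near the fixed points (using $\varphi(x)<x$). Your substitute justification --- that the thresholds appear ``because they are the magnitudes of the fixed-point eigenvalues isolated in Lemma~\ref{l3}'' --- does not fill this hole: Lemma~\ref{l3} constrains the point spectrum of $T$ outside $\sigma(A)$, whereas what is needed to force $F_2$ or $F_3$ to be nowhere dense is a lower bound on the spectrum of $T$ restricted to invariant subalgebras supported on sets of positive measure, which is a different statement and requires its own proof.

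Your argument for part (1) is also invalid at its last step. By Lemma~\ref{l2}, $\sigma(A)$ is an annulus, which in general has nonempty interior in $\mathds{C}$; hence confining the open set $\sigma_2(T)\setminus\sigma_1(T)$ to $\sigma(A)\cup\{w(0),w(1)\}$ does not make it empty, and your claim that ``neither $\{|w(0)|,|w(1)|\}$ nor $\sigma(A)$ contains a nonempty open subset of the plane'' fails whenever the annulus is nondegenerate. A symptom of the problem is that your part (1) argument never uses the hypothesis $|w(0)|\le|w(1)|$: if it were valid it would prove $\sigma_1(T)=\sigma_2(T)$ for every invertible weight, contradicting Lemma~\ref{l6} and Example~\ref{e4}, where $\sigma_2(T)\setminus\sigma_1(T)$ contains the open annulus $\{3/2<|\lambda|<2\}$ lying inside $\sigma(A)$ --- precisely the configuration (eigenvalues of infinite multiplicity interior to $\sigma(A)$) that your reasoning overlooks. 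What actually rules out this ``reversed crossing'' when $|w(0)|\le|w(1)|$ is again the substitute fact above, or equivalently the observation that $(\varphi^n)^\prime$ cannot grow exponentially on a set of positive measure near the attracting fixed point $0$, nor $(\varphi^{-n})^\prime$ near $1$, so that an invariant set of positive measure on which $|w_n|(\varphi^n)^\prime$ exceeds $(|\lambda|+\varepsilon)^n$ near $0$ and falls below $(|\lambda|-\varepsilon)^n$ near $1$ can exist only if $|w(1)|<|\lambda|<|w(0)|$; this is where the hypothesis $|w(0)|\le|w(1)|$ must enter, and it is absent from your proposal.
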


\begin{proof} The proof repeats almost verbatim the one of Lemma~\ref{l4}. The only difference worth noticing is that instead of the Kamowitz - Scheinberg theorem we use the following fact. Let $E$ be a closed $\varphi$-invariant subset of $[0,1]$ such that $mes(E) > 0$. Let $L_E = \{f        \in Lip_1(0,1) : supp f        \subseteq E\}$. Then $\sigma (T, L_E)        \supseteq w(0)\Gamma        \cup        w(1)\Gamma$.

To prove the last statement let us first notice that because $\varphi (E) = E$ we have $\{w(0), w(1)\}        \subset \sigma (T, L_E)$. Let $\Psi : Q        \rightarrow [0,1]$ be the surjection corresponding to the natural embedding of $C[0,1]$ into $C(Q)$. Let $\hat{E} = \Psi^{-1}(E)$. The operator $\hat{T}$ acts on $C(\hat{E})$ and $\sigma (T, C(\hat{E}))$ is rotation invariant. Assume that $w(0) \Gamma \nsubseteqq \sigma (T,L_E)$. Then $w(0) \Gamma        \cap \sigma (\hat{T},C(\hat{E})) = \emptyset$. We have to consider three possibilities.

(a) $|w(0)| < 1/\rho ((\hat{T})^{-1}, C(\hat{E}))$. Let $F = \Psi^{-1}(0)$. Then there are $N        \in \mathds{N}$ and $\varepsilon >0$ such that
$|w(0)|^n \widehat{(\varphi^n)^\prime}(q) > (|w(0)| + \varepsilon )^n, n \geq N, q        \in F$. Recalling that $\varphi (x) < x$ on $(0,1)$ we see that there is $a        \in (0,1)$ such that $(\varphi^n)^\prime > (1 + \varepsilon/2)^n, n \geq N$ a.e. on $E        \cap [0,a]$, an obvious contradiction.

(b)  $ \rho (\hat{T}, C(\hat{E})) > |w(0)| > 1/\rho ((\hat{T}^{-1}), C(\hat{E})) $. Out assumption that $w(0) \Gamma \nsubseteqq \sigma (T,L_E)$ implies that $\sigma (\hat{T}, C(\hat{E}))        \cap w(0) \Gamma = \emptyset$. Then $\hat{E}$ is the union of two disjoint $\hat{\varphi }$-invariant clopen sets $\hat{E}_1$ and $\hat{E}_2$ such that $|w(0)| < 1/\rho ((\hat{T})^{-1}, C(\hat{E}_2))$. By considering $E_2 = \Psi^{-1}(\hat{E}_2)$ we come to a contradiction in the same way as in part (a).

(c)$ |w(0)| > \rho (\hat{T}, C(\hat{E}))$. Then $ |w(1)| > \rho (\hat{T}, C(\hat{E}))$. Considering the operator $T^{-1}$ and keeping in mind that
$\varphi^{-1}(x) > x$ on $(0,1)$ we again come to a contradiction like in part (a).

\end{proof}

Let us turn to the case when $\varphi (x) < x, x        \in (0,1)$ and $|w(0)| > |w(1)|$.

\begin{lemma} \label{l6} Let $\varphi$ be a homeomorphism of $[0,1]$ onto itself such that $\varphi$ is an invertible element of the algebra $Lip_1(0,1)$ and $\varphi (x) <  x$ on $(0,1)$. Let $w$ be an invertible element of $Lip_1(0,1)$ such that $|w(0)| > |w(1)|$. Let $Tf = w(f        \circ \varphi ), f        \in Lip_1(0,1)$. Then

\noindent (1) $\lambda        \in \sigma_2(T^\prime) \setminus \sigma_1(T)$ if and only if there are Lebesgue measurable disjoint subsets $E_1$, $E_2$, and $E_3$ of $[0,1]$, $\varepsilon > 0$, and $n        \in \mathds{N}$ such that

(a) $0 < mes(E_1) \leq 1$,

(b) $E_1        \cup        E_2        \cup        E_3 = [0,1]$,

(c) $\varphi (E_i) = E_i, i=1, 2, 3$,

(d) $|w_n| (\varphi^n)^\prime < (|\lambda| - \varepsilon )^n$ a.e. on $E_2$,

(e)$|w_n| (\varphi^n)^\prime > (|\lambda| + \varepsilon )^n$ a.e. on $E_3$,

(f)  $|w(0)|^n(\varphi^n)^\prime < (|\lambda| - \varepsilon )^n$ a.e. on $E_1        \cap [0,1/2]$, and

(g) $|w(1)|^{-n}(\varphi^{-n})^\prime < (|\lambda| +  \varepsilon )^{-n}$ a.e. on $E_1        \cap [1/2,1]$.

\noindent (2) $\lambda        \in \sigma_2(T) \setminus \sigma_1(T)$ if and only if there are Lebesgue measurable disjoint subsets $E_1$, $E_2$, and $E_3$ of $[0,1]$, $\varepsilon > 0$, and $n        \in \mathds{N}$ such that

(a) $0 < mes(E_1) \leq 1$,

(b) $E_1        \cup        E_2        \cup        E_3 = [0,1]$,

(c) $\varphi (E_i) = E_i, i=1, 2, 3$,

(d) $|w_n| (\varphi^n)^\prime < (|\lambda| - \varepsilon )^n$ a.e. on $E_2$,

(e)$|w_n| (\varphi^n)^\prime > (|\lambda| + \varepsilon )^n$ a.e. on $E_3$,

(f)  $|w(0)|^n(\varphi^n)^\prime > (|\lambda| + \varepsilon )^n$ a.e. on $E_1        \cap [0,1/2]$, and

(g) $|w(1)|^{-n}(\varphi^{-n})^\prime > (|\lambda| -  \varepsilon )^{-n}$ a.e. on $E_1        \cap [1/2,1]$.

\end{lemma}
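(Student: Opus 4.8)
The plan is to run the argument of Lemmas~\ref{l4} and~\ref{l5} essentially unchanged, transporting everything to the Gelfand compact $Q$ and quoting the structural description of the one-sided essential spectra from~\cite{Ki3}; the single genuinely new feature is that the hypothesis $|w(0)|>|w(1)|$ opens a range of $\lambda$ in which \emph{both} invariant pieces $E_2$ and $E_3$ survive, which is why the statement now carries three sets rather than the two of Lemma~\ref{l5}.

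First I would reduce to $C(Q)$. As established at the start of the section, $\sigma_i(T)=\sigma_i(A)$ for $i=1,2,3,4$, and $A$ is identified with $\hat A\hat f=\hat w\,\widehat{\varphi^\prime}\,(\hat f\circ\psi)$ on $C(Q)$; hence $\lambda\in\sigma_2(T^\prime)\setminus\sigma_1(T)$ iff $\lambda\in\sigma_2(\hat A^\prime)\setminus\sigma_1(\hat A)$. By~\cite{Ki3} the latter is equivalent to the existence of closed pairwise disjoint $F_1,F_2,F_3\subseteq Q$ with $F_1$ clopen and $\{\psi^n(F_1)\}$ pairwise disjoint, $\psi(F_i)=F_i$ for $i=2,3$, $Q=F_2\cup F_3\cup\bigcup_n\psi^n(F_1)$, and $\sigma(\hat A,C(F_2))\subset\{|\xi|<|\lambda|\}$, $\sigma(\hat A,C(F_3))\subset\{|\xi|>|\lambda|\}$. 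The idempotents $\chi_{F_2},\chi_{F_3}$ and $\sup_n\chi_{\psi^n(F_1)}$ then correspond to $\varphi$-invariant measurable sets $E_2,E_3,E_1$ partitioning $[0,1]$, which gives (a)--(c).

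Next I would read off the growth conditions. For the invariant pieces one has $\|\hat A^n|_{C(F_i)}\|=\||w_n|(\varphi^n)^\prime\|_{L^\infty(E_i)}$, and since the spectral radius is the infimum over $n$ of these norms to the power $1/n$, the two spectral bounds (applied to $\hat A$ and to $\hat A^{-1}$) give exactly (d) and (e) for a single large $n$ and a suitable $\varepsilon$. For the wandering piece the relevant object is a weighted bilateral shift along the orbit $\bigcup_n\psi^n(F_1)$, whose forward end abuts the fibre $\Phi_0=\Psi^{-1}(0)$ and whose backward end abuts $\Phi_1=\Psi^{-1}(1)$. Because $w$ is continuous whereas $\varphi^\prime$ lies only in $L^\infty$, near each fixed point the weight may be frozen at its boundary value $w(0)$, resp. $w(1)$, but the derivative cocycle may not; invoking the localisation $\rho(A|L_a)=\rho(C|L_a)$ of Lemma~\ref{l3} (see~\cite[Theorem 3.23]{Ki1}) with the constant-weight comparison operators $C_0g=w(0)(g\circ\varphi)$ and $C_1g=w(1)(g\circ\varphi)$ turns the forward-escape requirement near $0$ into (f) and the backward-escape requirement near $1$ into (g); the split $[0,1/2]\cup[1/2,1]$ merely separates the two ends.

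The decisive point, and the step I expect to be hardest, is the \emph{necessity} of retaining both invariant sets, which is exactly where $|w(0)|>|w(1)|$ enters. Since $\varphi^\prime\in L^\infty$, the fibres $\Phi_0,\Phi_1$ are $\psi$-invariant but carry \emph{annular} local spectra, of radii $|w(0)|$ (resp. $|w(1)|$) times the extreme values of the derivative cocycle at the endpoint; consequently an invariant set of positive Lebesgue measure, although it accumulates at both $0$ and $1$, may select the slow part of one fibre and the fast part of the other. For $\lambda$ with $|w(1)|<|\lambda|<|w(0)|$ --- a range nonempty precisely because $|w(0)|>|w(1)|$ --- the threshold $|\lambda|/|w(0)|$ at $0$ falls below $1$ while $|\lambda|/|w(1)|$ at $1$ rises above $1$, so both endpoint annuli straddle $|\lambda|$; one may then peel off simultaneously a contracting $E_2$ (slow at both fibres) and an expanding $E_3$ (fast at both fibres), still leaving a wandering $E_1$ of positive measure. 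This is exactly the configuration that the Kamowitz--Scheinberg obstruction of Lemma~\ref{l5} excluded when $|w(0)|\le|w(1)|$, since there the analogous range was empty and $\sigma(\hat A,C(F))\supseteq w(0)\Gamma\cup w(1)\Gamma$ forced one of the two pieces to vanish. Proving that this threefold splitting is both attainable (sufficiency, by rebuilding $F_1,F_2,F_3$ from $E_1,E_2,E_3$ and re-applying~\cite{Ki3}) and forced (necessity) is the crux. Finally, part~(2) is the mirror image: replacing $\sigma_2(\hat A^\prime)$ by $\sigma_2(\hat A)$ in the~\cite{Ki3} dichotomy interchanges the contracting and expanding roles and reverses the inequalities in (f),(g), equivalently one applies part~(1) to $T^{-1}$, for which $\varphi^{-1}(x)>x$ and the two endpoints are swapped.
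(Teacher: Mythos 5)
The paper states Lemma~\ref{l6} without proof (it is offered as the natural continuation of Lemmas~\ref{l4} and~\ref{l5}), so your proposal must be measured against the method of those two lemmas, and it does reconstruct exactly that method: the reduction $\sigma_i(T)=\sigma_i(A)$, the passage to $\hat A$ on $C(Q)$, the three-set criterion of~\cite{Ki3} ($F_1$ wandering, $F_2,F_3$ invariant with spectra inside/outside the circle of radius $|\lambda|$), the spectral-radius reading of conditions (d),(e), and the endpoint analysis producing (f),(g). Your ``decisive point'' is also the right one, and your reasoning for it is sound: the obstruction used in Lemma~\ref{l5} (every invariant set $E$ of positive measure has $\sigma(T,L_E)\supseteq w(0)\Gamma\cup w(1)\Gamma$) is genuinely unavailable when $|w(0)|>|w(1)|$; what survives is only that the weighted rate of an invariant set at its $0$-end is at most $|w(0)|$ and at its $1$-end at least $|w(1)|$, so for $|w(1)|<|\lambda|<|w(0)|$ a contracting piece $E_2$ and an expanding piece $E_3$ can coexist, and this is precisely why the statement needs three sets rather than two.

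Two points need repair. First, the closing claim that part (2) is ``equivalently'' obtained by applying part (1) to $T^{-1}$ is wrong: for invertible $T$ one has $\ker(\lambda I-T)=\ker(\lambda^{-1}I-T^{-1})$ and likewise $def(\lambda I-T)=def(\lambda^{-1}I-T^{-1})$, since $\lambda I-T=-\lambda T(\lambda^{-1}I-T^{-1})$. Hence passing to $T^{-1}$ (with or without the reflection $x\mapsto 1-x$) preserves the kernel/deficiency dichotomy rather than interchanging it, and applying part (1) to $T^{-1}$ only reproduces part (1) for $T$. Part (2) must come from the dual criterion of~\cite{Ki3} describing $\sigma_2(\hat A)\setminus\sigma_1(\hat A)$ --- which is in fact the first route you state, so the error sits in a redundant clause, but it would be a genuine mistake if executed. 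Second, a gap in the translation step: $F_2$ and $F_3$ are closed but in general not clopen, so ``$\chi_{F_2},\chi_{F_3}$'' are not idempotents of $L^\infty(0,1)$ and do not directly yield measurable sets. You must first use that $Q$ is extremally disconnected, so that $cl\big(\bigcup_n\psi^n(F_1)\big)$ is clopen; $E_1$ is its trace on $[0,1]$, and $E_2,E_3$ come from the complementary clopen pieces $F_2\setminus cl\big(\bigcup_n\psi^n(F_1)\big)$ and $F_3\setminus cl\big(\bigcup_n\psi^n(F_1)\big)$. This is not pedantry: conditions (f) and (g) are generated exactly by the portions of $F_2,F_3$ swallowed by that closure (the limit points of the wandering orbit in the fibres over $0$ and $1$), which is what your ``abutting'' sentence gestures at but does not carry out.
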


The next example shows that the statement of Lemma~\ref{l6} in general cannot be improved.

\begin{example} \label{e4} Let $\{a_n\}, \{b_n\}$, $\{c_n\}$, and $\{d_n\}$, $n        \in \mathds{Z}$ be sequences of real numbers from $(0,1)$ defined as follows
\begin{itemize}
  \item $a_n = \begin{cases} \frac{1}{n+2} &\mbox{if } n \geq 0 \\
1-\frac{1}{|n|+2} & \mbox{if } n < 0. \end{cases} $,
  \item $b_n = \begin{cases} \frac{a_n +a_{n+1}}{2} &\mbox{if } n \geq 0 \\
a_n -\frac{1}{2^{|n|+3}} & \mbox{if } n < 0. \end{cases} $,
  \item $c_n = \begin{cases} a_{n+1} + \frac{1}{2^{n+3}} + (3/4)^{n+3} &\mbox{if } n \geq 0 \\
b_n - (2/3)^{|n|+3} & \mbox{if } n < 0. \end{cases} $,
  \item \item $d_n = \begin{cases} a_{n+1} + \frac{1}{2^{n+3}}  &\mbox{if } n \geq 0 \\
\frac{a_n +a_{n+1}}{2} & \mbox{if } n < 0. \end{cases} $,
\end{itemize}

We define the homeomorphism $\varphi$ of $[0,1]$ onto itself in the following way.
$$\varphi (\Theta_n) = \Theta_{n+1}, n        \in \mathds{Z}, \Theta        \in \{a,b,c,d\}, \varphi (0) = 0, \varphi (1) = 1,$$
and $\varphi$ is linear on the intervals $(a_n, b_n), (b_n, c_n), (c_n, d_n)$, and $(d_n, a_{n+1})$. It is obvious that $\varphi$ is an invertible element of $Lip_1(0,1)$.

Let $w$ be an invertible element of $Lip_1(0,1)$ such that $w(0) = 2$ and $w(1) = 1$. Let $Tf = w(f        \circ \varphi )$. Then it is not difficult to see that
\begin{enumerate}
  \item $\sigma (T) = \sigma_3(T) = \sigma_4(T) = \sigma_5(T) = \{\lambda        \in \mathds{C} : 1 \leq |\lambda| \leq 2\}$.
  \item $\sigma_2(T) = \{\lambda        \in \mathds{C} : 3/2 \leq |\lambda| \leq 2\}$.
  \item $\sigma_2(T^\prime) = 2\Gamma        \cup        \{\lambda        \in \mathds{C} : 1 \leq |\lambda| \leq 3/2\}$.
  \item $\sigma_1(T) = \Gamma        \cup        (3/2)\Gamma        \cup        2\Gamma$.
\end{enumerate}

Moreover, if $\lambda        \in \sigma_2(T) \setminus \sigma_1(T)$ then in notations of Lemma~\ref{l6} we have

\noindent $E_1 = \bigcup \limits_{n=-\infty}^\infty [b_n, c_n)$, $E_2 = \bigcup \limits_{n=-\infty}^\infty [d_n, a_{n+1})$, $E_3 = \bigcup \limits_{-\infty}^\infty (a_n, b_n]$.

On the other hand if $\lambda        \in \sigma_2(T^\prime) \setminus \sigma_1(T)$ then

\noindent $E_1 = \bigcup \limits_{n=-\infty}^\infty [c_n, d_n)$, $E_2 = \bigcup \limits_{n=-\infty}^\infty [a_{n+1}, d_n)$, $E_3 = \bigcup \limits_{-\infty}^\infty [a_n, c_n)$.
\end{example}

We are ready to consider general weighted automorphisms of $Lip_1(0,1)$. First let us notice that the statement of Lemma~\ref{l2} becomes in general false if we drop the assumption that $\varphi (x) < x$ on $(0,1$.

\begin{example} \label{e2} Let $\varphi$ be a homeomorphism of $[0,1]$ onto itself such that
\begin{enumerate}
  \item $\varphi (x) < x$ on $(0,1/2)        \cup        (1/2,1)$.
  \item $\varphi (1/2) = 1/2$.
  \item $\varphi$ and $\varphi^{-1}$ are continuously differentiable on $[0,1/2)        \cup        (1/2,1]$.
  \item $\lim \limits_{x \to 1/2 -} \varphi^\prime(x) = A$, $\lim \limits_{x \to 1/2 +} \varphi^\prime(x) = B$, and $A > B > 0$.
\end{enumerate}
Let $w        \in C^{(1)}[0,1]$ be such that $w(1/2) =1$, $w(0)\varphi^\prime(0) = A $, and $w(1)\varphi^\prime(1) = B $. Let $T$ be the operator on $Lip_1(0,1)$ defined as $Tf = w(f       \circ \varphi )$. Then $\sigma_i(T), i=1, \ldots , 5$ is the union of two circles with the radii $A$ and $B$.
\end{example}

Nevertheless, the previous lemmas provide the following result.

\begin{theorem} \label{t2}. Let $\varphi$ be a homeomorphism of $[0,1]$ onto itself. Assume that $\varphi, \varphi^{-1}        \in Lip_1(0,1)$. Let $w$ be an invertible element of $Lip_1(0,1)$ and let $T: Lip_1(0,1)        \rightarrow Lip_1(0,1)$ be the weighted composition operator defined as
$$Tf =w(f       \circ \varphi ), f        \in Lip_1(0,1).$$
Let $A$ be the weighted composition operator on $L^\infty(0,1)$ defined as
$$Ag =w \varphi^\prime (g       \circ \varphi ), g        \in L^\infty(0,1).$$
Let $F$ be the set of all $\varphi$-periodic points in $[0,1]$ and $I$ the (at most countable) set of points isolated in $F$.
Then $(1)$ $\sigma_i(T) = \sigma_i(A), i=1, \ldots , 5$.

\noindent $(2a)$ If $\varphi (0) = 0$ and $\varphi (1) =1$ then $\sigma (T) \setminus \sigma (A)        \subseteq \{w(t) : t        \in I\}$.

\noindent $(2b)$  If $\varphi (0) = 1$ and $\varphi (1) =0$, and $a$ is the only fixed point of $\varphi$ then $\sigma (T) \setminus \sigma (A)        \subseteq \{w(a)\}        \cup        \{\pm\sqrt{w(t)w(\varphi (t))} : t        \in I \setminus \{a\}\}$.

\noindent $(3)$ The eigenspaces corresponding to the points in $\sigma (T) \setminus \sigma (A)$ are one dimensional.

\noindent $(4)$ If the Lebesgue measure of  $F$ is 0 then the sets $\sigma_i(T), i=1, \ldots , 5$ are rotation invariant.

\end{theorem}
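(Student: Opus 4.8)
The strategy is to reduce the general case to the two already-resolved situations, namely $\varphi(0)=0,\varphi(1)=1$ (handled by Lemmas~\ref{l4}--\ref{l6} together with Lemma~\ref{l3} and Corollary~\ref{c1}) and $\varphi(0)=1,\varphi(1)=0$. First I would dispose of statement $(1)$ exactly as in the local analysis preceding Lemma~\ref{l1}: the hyperplane $X=\{f\in Lip_1(0,1):f(0)=0\}$ is $T$-invariant of codimension one, and under the isometric identification of $X$ with $L^\infty(0,1)$ the restriction $T|X$ is similar to an operator $B$ on $L^\infty(0,1)$ differing from $A$ by the compact operator $f\mapsto w'(x)\int_0^{\varphi(x)}f$. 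Since $\sigma_i$, $i=1,2,3,4$, are invariant under compact perturbations and under codimension-one restrictions, this yields $\sigma_i(T)=\sigma_i(A)$ for $i=1,\dots,4$; the case $i=5$ follows from the version of Corollary~\ref{c1} valid in this generality, since $\sigma_5(A)=\sigma(A)$ by Corollary~\ref{c1.10} applied to $\hat A$ on $C(Q)$ (the periodic points of $\psi$ being empty by Frolik's theorem, so all essential spectra agree with $\sigma(A)$).

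For the residual statements $(2a),(2b),(3)$ I would argue that a point $\lambda\in\sigma(T)\setminus\sigma(A)$ must be an isolated eigenvalue (since $\lambda I-B$ is a compact perturbation of the invertible $\lambda I-A$, hence Fredholm of index zero, and $\lambda\notin\sigma_3(T)=\sigma_3(A)$), so it suffices to locate its eigenvectors. The key device is the propagation identity $w_n(x)f(\varphi^n(x))=\lambda^n f(x)$ satisfied by any eigenvector $f$, combined with the mean value estimate $|f(\varphi^n(x))|\le\|(f\circ\varphi^n)'\|_\infty\le\|f'\|_\infty\,(\varphi^n)'(\xi_n)$ used in the proof of Lemma~\ref{l3}. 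In case $(2a)$ the dynamics push every non-fixed orbit toward $F$, and the spectral-radius computation on the invariant subalgebras $L_a$ forces $|\lambda|$ to equal $|w(t)|$ for some isolated periodic point $t$; the nowhere-dense/measure-zero argument rules out non-isolated points exactly as in Theorem~\ref{t1.1}. In case $(2b)$ I would pass to $T^2$, which is a weighted composition operator with symbol $\varphi^2$ fixing both endpoints, apply $(2a)$ to $T^2$, and then extract the eigenvalues of $T$ as the two square roots $\pm\sqrt{w(t)w(\varphi(t))}$ over period-two orbits, together with $w(a)$ from the unique genuine fixed point $a$. Statement $(3)$ (one-dimensionality) follows because an eigenvector is determined on each relevant orbit by its value at a single point, as in Remark~\ref{r2}.

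Statement $(4)$ follows from Corollary~\ref{c2}: when $mes(F)=0$, the set of $\psi$-periodic points carrying invariant measures of positive mass is negligible, so by \cite[Theorem 3.2]{Ki3} each $\sigma_i(A)$ is rotation invariant, and the identities from $(1)$ transport this to $T$.

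\medskip

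The main obstacle I anticipate is the transition in $(2b)$ from $T$ to $T^2$: one must check that $\sigma(T^2)\setminus\sigma(A^2)$ is governed by the weight $w_2=w\,(w\circ\varphi)$ and that taking square roots reconstructs $\sigma(T)\setminus\sigma(A)$ faithfully, without spuriously splitting or merging eigenvalues. Concretely, the subtlety is that an eigenvalue $\mu$ of $T^2$ lying outside $\sigma(A^2)$ yields at most the two candidates $\pm\sqrt\mu$ for $T$, and one must verify that exactly the correct sign(s) actually occur as eigenvalues of $T$ and that $\sigma(A^2)=\sigma(A)^2$ in the relevant annular region (which is where Lemma~\ref{l2}'s connectedness and the rotation invariance from Lemma~\ref{l1} are used). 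This sign-bookkeeping, rather than any new functional-analytic input, is where the real care is needed; everything else is a faithful repetition of the arguments already established in Lemmas~\ref{l3}--\ref{l6} and Theorem~\ref{t1.1}.
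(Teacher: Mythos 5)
The paper itself gives no written proof of Theorem~\ref{t2} (it is stated as following from "the previous lemmas"), and your overall architecture --- compact perturbation on the derivative side for $\sigma_i$, $i\le 4$, the propagation identity of Lemma~\ref{l3} to locate $\sigma(T)\setminus\sigma(A)$, passage to $T^2$ in the orientation-reversing case, and Frolik's theorem for rotation invariance when $mes(F)=0$ --- is indeed the intended assembly. But two of your steps are genuinely wrong as written, not merely terse. First, the treatment of $i=5$: you appeal to "the version of Corollary~\ref{c1} valid in this generality," justified by the claim that the $\psi$-periodic points in $Q$ are empty by Frolik's theorem. Both claims fail once $mes(F)>0$, which is allowed in statements (1)--(3): Frolik's theorem yields emptiness of the $\psi$-periodic set only when the $\varphi$-periodic set is Lebesgue-null (otherwise the clopen subset of $Q$ corresponding to $F$ consists of $\psi$-periodic points), and Corollary~\ref{c1} rests on Lemma~\ref{l2}, which the paper explicitly shows is false in this generality (Example~\ref{e2}: $\sigma(A)$ can be two disjoint circles). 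What does survive is $\sigma_5(A)=\sigma(A)$, via the $C(Q)$ analogue of Corollary~\ref{c1.10}(3), which needs no hypothesis on periodic points. But that alone does not give $\sigma_5(T)=\sigma_5(A)$: since $\rho(T)\subseteq\rho(A)$ one gets only $\sigma_5(T)\supseteq\sigma_5(A)$ for free, and the reverse inclusion requires knowing that every point of $\sigma(T)\setminus\sigma(A)$ lies in a component of $\mathds{C}\setminus\sigma_1(T)$ that meets $\rho(T)$, i.e.\ the isolated-eigenvalue analysis of (2)--(3). So the case $i=5$ must be postponed until after (2)--(3) --- exactly as the paper does in the proof of Theorem~\ref{t1.1} --- whereas your ordering, which disposes of all of (1) first, leaves this step unproved.

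Second, in case (2b) your proof of statement (1) breaks at the very first move: when $\varphi(0)=1$ the hyperplane $X=\{f\in Lip_1(0,1):f(0)=0\}$ is \emph{not} $T$-invariant, since $(Tf)(0)=w(0)f(1)$. You must instead use the codimension-two invariant subspace $\{f:f(0)=f(1)=0\}$, or observe that the compression of $T$ to $X$ along the constants is similar to $A+K$ ($K$ compact) and differs from $T$ by a finite-rank operator, or run the whole reduction for $T^2$ and transfer back. Relatedly, the square-root bookkeeping you flag in (2b) is indeed the substantive remaining point (the paper also leaves it implicit; compare the remarks before Theorem~\ref{t1.2}), but flagging it is not doing it: one still needs that an eigenvalue $\mu$ of $T^2$ outside $\sigma(A^2)$ with eigenvector $f$ produces an eigenvalue of $T$ among $\pm\sqrt{\mu}$ (consider $Tf\pm\sqrt{\mu}\,f$, at least one of which is a nonzero eigenvector), and that $\sigma(A^2)=\{\zeta^2:\zeta\in\sigma(A)\}$ in the relevant region, so that nothing in $\sigma(T)\setminus\sigma(A)$ escapes the list in (2b).
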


The previous results can be extended to weighted automorphisms of algebras $Lip^n_1(0,1)$.

\begin{theorem} \label{t3}. Let $\varphi$ be a homeomorphism of $[0,1]$ onto itself. Assume that $\varphi, \varphi^{-1}        \in Lip_1^n(0,1)$. Let $w$ be an invertible element of $Lip_1^n(0,1)$ and let $T: Lip_1^n(0,1)        \rightarrow Lip_1^n(0,1)$ be the weighted composition operator defined as
$$Tf =w(f       \circ \varphi ), f        \in Lip_1^n(0,1).$$
Let $A$ be the weighted composition operator on $L^\infty(0,1)$ defined as
$$Ag =w \varphi^{(n+1)} (g       \circ \varphi ), g        \in L^\infty(0,1).$$
Let $F$ be the set of all $\varphi$-periodic points in $[0,1]$ and $I$ the (at most countable) set of points isolated in $F$.
Then $(1)$ $\sigma_i(T) = \sigma_i(A), i=1, \ldots , 5$.

\noindent $(2a)$ If $\varphi (0) = 0$ and $\varphi (1) =1$ then $\sigma (T) \setminus \sigma (A)        \subseteq \{w(t) : t        \in I\}$.

\noindent $(2b)$  If $\varphi (0) = 1$ and $\varphi (1) =0$, and $a$ is the only fixed point of $\varphi$ then $\sigma (T) \setminus \sigma (A)        \subseteq \{w(a)\}        \cup        \{\pm\sqrt{w(t)w(\varphi (t))} : t        \in I \setminus \{a\}\}$.

\noindent $(3)$ The eigenspaces corresponding to the points in $\sigma (T) \setminus \sigma (A)$ are $n$-dimensional.

\noindent $(4)$ If the Lebesgue measure of  $F$ is 0 then the sets $\sigma_i(T), i=1, \ldots , 5$ are rotation invariant.

\end{theorem}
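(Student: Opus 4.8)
The plan is to mimic the strategy of Theorems~\ref{t1.3} and~\ref{t2}: pass to the top derivative $f^{(n)}$, which intertwines $T$ with a weighted composition operator on $L^\infty(0,1)$ up to a compact perturbation, and then read off the essential spectra from the lemmas of this section applied to that operator. First I would treat the case $\varphi(0)=0$, $\varphi(1)=1$. Fix the subspace $X = \{f\in Lip_1^n(0,1) : f^{(i)}(0) = 0,\ i=0,\ldots,n-1\}$, of codimension $n$; it is $T$-invariant because, by the Leibniz and Fa\`a di Bruno formulas, $(Tf)^{(i)}(0)$ is a linear combination of the $f^{(j)}(0)$, $j\le i$, and $\varphi(0)=0$. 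The map $\iota: f\mapsto f^{(n)}$ is an isometric isomorphism of $X$ onto $L^\infty(0,1)$, with inverse the $n$-fold integral from $0$, and under $\iota$ the restriction $T|_X$ becomes $B=\iota T\iota^{-1}$ on $L^\infty$.

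Expanding $(Tf)^{(n)}=(w\,(f\circ\varphi))^{(n)}$, the only summand containing $f^{(n)}\circ\varphi$ with no intervening integration is $w(\varphi')^n (f^{(n)}\circ\varphi)$, i.e. $Ag$ with $g=f^{(n)}$; every other summand contains a factor $f^{(j)}=\iota^{-1}(g)^{(j)}$ with $j\le n-1$, which is an $(n-j)$-fold integral of $g$ and hence a compact operator from $L^\infty$ into $C[0,1]$. Thus $B-A$ is compact, and since the spectra $\sigma_i$ ($i=1,2,3,4$) are stable under compact perturbations and under passage to a finite-codimensional invariant subspace, $\sigma_i(T)=\sigma_i(B)=\sigma_i(A)$ for $i=1,2,3,4$. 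As $A$ is exactly an operator of the type treated in Lemmas~\ref{l1}--\ref{l2}, $\sigma(A)$ is a rotation-invariant annulus, so $\sigma_5(A)=\sigma(A)$; the identity $\sigma_5(T)=\sigma(A)$ then follows once $\sigma(T)\setminus\sigma(A)$ is shown to be discrete, exactly as in the proof of Theorem~\ref{t1.1}.

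Next I would locate $\sigma(T)\setminus\sigma(A)$, giving $(2a)$. If $\lambda\in\sigma(T)\setminus\sigma(A)$ then by the annulus structure either $|\lambda|>\rho(A)$ or $|\lambda|<1/\rho(A^{-1})$, and $\lambda$ is an isolated eigenvalue. Choosing an eigenvector $f$ and a point $c$ with $f(c)\neq0$, the relation $w_m(c)f(\varphi^m(c))=\lambda^m f(c)$ together with the orbit growth-rate estimate from the proof of Lemma~\ref{l3} forces $\lambda=w(t)$ for a periodic point $t$; a non-isolated periodic point has $\varphi'(t)=1$, whence $w(t)\in\sigma(A)$, so $t\in I$. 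For $\varphi(0)=1$, $\varphi(1)=0$ (statement $(2b)$) I would apply the above to $T^2$, which fixes $0$ and $1$; the period-$2$ points then contribute the values $\pm\sqrt{w(t)w(\varphi(t))}$ after extracting square roots, exactly as indicated before Theorem~\ref{t1.2}. Statement $(4)$ follows from Corollary~\ref{c2} and Lemma~\ref{l1}: when $\mathrm{mes}(F)=0$ the induced homeomorphism $\psi$ on the Gelfand compact of $L^\infty$ has no periodic points by Frolik's theorem, so $\sigma(A)$ and each $\sigma_i(A)$ is rotation invariant and the isolated eigenvalues fill out full circles.

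The delicate point, and the step I expect to be the main obstacle, is statement $(3)$, the exact dimension of the eigenspaces. My plan is to compute it through the induced action of $T$ on the finite-dimensional quotient $Lip_1^n/X$, that is, on the $(n-1)$-jet of $f$ at the relevant isolated periodic point. The leading-order analysis of $w\,(f\circ\varphi)=\lambda f$ shows that no nonzero eigenvector lies in $X$, so the eigenspace embeds into this $n$-dimensional jet space; its dimension is therefore at most $n$ and equals the multiplicity of $\lambda$ for the induced (triangular) jet operator. Carrying out this jet computation carefully — tracking how the merely $L^\infty$ top derivative interacts with the non-hyperbolic directions, and matching it against the admissible eigenvalues $w(t)$ — is where the real work lies, and is precisely the step from which the bound asserted in $(3)$ must be extracted.
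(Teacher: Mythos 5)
The paper itself offers no written proof of Theorem~\ref{t3} (it is stated as an extension of Theorem~\ref{t2} and the lemmas of Section 3), and your skeleton --- differentiate $n$ times to intertwine $T$ restricted to a finite-codimensional invariant subspace with $A$ plus a compact operator, use stability of $\sigma_i$, $i\le 4$, under compact perturbation, and pass to $T^2$ in the orientation-reversing case --- is exactly the intended route. The genuine gap is where you invoke Lemmas~\ref{l1}, \ref{l2} and \ref{l3}: those lemmas are proved under the standing assumption $\varphi (x)<x$ on $(0,1)$, i.e. $F=\{0,1\}$, whereas in Theorem~\ref{t3} the set $F$ of periodic points is arbitrary. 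The paper's own Example~\ref{e2} (a single interior fixed point) shows that $\sigma (A)$ can be the union of two circles, hence neither connected nor an annulus; a point $\lambda \in \sigma (T)\setminus \sigma (A)$ may then lie in a spectral gap, where your dichotomy ``$|\lambda|>\rho (A)$ or $|\lambda|<1/\rho (A^{-1})$'' is vacuous and the growth estimate of Lemma~\ref{l3}, which compares $|\lambda|$ only with the data at $0$ and $1$, proves nothing. What is needed is the localization used in the proof of Theorem~\ref{t1.1}: pick an interval $(a,b)$ complementary to $F$ on which the (limit) eigenfunction does not vanish, and run the growth estimate against the radii of the annulus of $A$ restricted over $[a,b]$. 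For the same reason, rotation invariance of the $\sigma_i(A)$ is not available in general: it requires $mes(F)=0$ so that Frolik's theorem applies, which is precisely why that appears as the hypothesis of part (4) and cannot be used in your proofs of (1) and (2a).

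Part (3) is a second gap, and by your own admission the step is not carried out; worse, the sketch you give is inconsistent with the conclusion it is supposed to yield. The induced operator on the $n$-dimensional jet space at an isolated periodic point $t$ is triangular with diagonal entries $w(t),\, w(t)\varphi^\prime(t),\,\ldots,\, w(t)(\varphi^\prime(t))^{n-1}$; when $(\varphi^\prime(t))^k\neq 1$ for $k\geq 1$ (the generic situation at an isolated periodic point), the $w(t)$-eigenspace of this jet operator is one-dimensional. Combined with your claim that no nonzero eigenvector of $T$ lies in the subspace $X$ (i.e. has vanishing jet), your embedding argument bounds $\dim\ker(\lambda I-T)$ by $1$; it cannot produce the value $n$ asserted in the theorem. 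So either eigenvectors with vanishing jet do exist (and the embedding strategy collapses), or the count must come from elsewhere, e.g. from solving the functional equation $w\cdot(f\circ \varphi )=\lambda f$ on each complementary interval and counting the free parameters, including the values $w(t)(\varphi^\prime(t))^k$ lying outside $\sigma (A)$; as written, (3) is not established. A smaller point of the same kind: your leading term is $w(\varphi^\prime)^n(f^{(n)}\circ \varphi )$, while the operator $A$ in the statement carries the weight $w\varphi^{(n+1)}$; this traces back to the paper's two inconsistent definitions of $Lip_1^n$ (Introduction versus Section 3), but your proof must fix one convention and verify that the operator it compares $T$ with is the operator $A$ of the statement, since otherwise even part (1) is proved for a different $A$.
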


\section{Invertible weighted composition operators on Sobolev spaces $W^{n,X}$.}

Let $X$ be a Banach function space on $(0,1)$, i.e. elements of $X$ are (classes of) Lebesgue measurable functions on $(0,1)$ and if $f       \in X$ and $g$ is a Lebesgue measurable function on $(0,1)$ such that a.e. $|g| \leq |f|$ then $g       \in X$ and $\|g\| \leq \|f\|$.

Assume additionally that $L^\infty(0,1)       \subseteq X       \subseteq L^1(0,1)$. Then we can consider the Sobolev space $W^{1,X}$ defined as follows,
\[
  W^{1,X} = \{f       \in C[0,1] :       \exists       g       \in X \; \text{such that} \; f(x) = f(0) + \int \limits_0^x g(t)dt \}
\]
and endowed with the norm $\|f\| = |f(0)| + \|g\|_X$. If $f       \in W^{1,X}$ then $f$ is a.e. differentiable and its derivative a.e. coincides with $g$; therefore we will write $f^\prime$ instead of $g$.

Similarly, for $n       \in \mathds{N}$ we define the space $W^{n,X}$ as
\[
  W^{n,X} = \{f       \in C^{(n-1)}[0,1] :       \exists       g       \in X \; \text{such that} \; f^{(n-1)}(x) = f^{(n-1)}(0) + \int \limits_0^x g(t)dt \}
\]
It is routine to check that endowed with the norm
\[
  \|f\| = \sum \limits_{i=0}^{n-1} f^{(i)}(0) + \|f^{(n)}\|_X
\]
$W^{n,X}$ is a Banach algebra.

Notice also that when $X = L^p(0,1)$, $1 \leq p \leq \infty$ the space $W^{n,X}$ coincides with the classic Sobolev space $W^{n,p}$. In particular, $W^{n, \infty} = Lip_1^n(0,1)$.

Now we have to make an additional assumption that $X$ is an \textit{interpolation} space (see e.g~\cite[Definition 4.2, page 20]{KPS}) between $L^\infty (0,1)$ and $L^1(0,1)$. In particular, $X$ is a rearrangement-invariant space~\cite{BS}.

\begin{lemma} \label{l7} Let $X$ be an interpolation space between $L^\infty(0,1)$ and $L^1(0,1)$. Let $w$ be an invertible element of the algebra $W^{n,X}$ and $\varphi$ be a homeomorphism of $[0,1]$ onto itself such that $\varphi, \varphi^{-1}        \in W^{n,\infty}$. Let
$$Tf = w(f       \circ \varphi ), f        \in W^{n,X}.$$
Then $T$ is a bounded invertible operator on $W^{n,X}$.

\end{lemma}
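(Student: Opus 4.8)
The plan is to factor $T = M_w C_\varphi$, where $M_w f = wf$ is multiplication by $w$ and $C_\varphi f = f\circ\varphi$ is composition with $\varphi$, and to treat the two factors separately. Since $W^{n,X}$ is a Banach algebra and $w$ is by hypothesis an invertible element of it, multiplication by $w$ is automatically a bounded invertible operator on $W^{n,X}$, with inverse $M_{w^{-1}}$. Thus everything reduces to showing that $C_\varphi$ is a bounded invertible operator on $W^{n,X}$, its inverse being $C_{\varphi^{-1}}$.

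To prove that $C_\varphi$ is bounded, first expand $(f\circ\varphi)^{(n)}$ by the Fa\`a di Bruno formula. The result is a finite sum whose only top-order term is $(f^{(n)}\circ\varphi)(\varphi^\prime)^n$; every other term has the form $c\,(f^{(k)}\circ\varphi)\prod_j \varphi^{(m_j)}$ with $0\le k\le n-1$. For these lower-order terms each factor $f^{(k)}\circ\varphi$ with $k\le n-1$ is continuous, hence lies in $L^\infty\subseteq X$, and $\|f^{(k)}\|_\infty$ is dominated by $\|f\|_{W^{n,X}}$ (using the continuous embedding $X\hookrightarrow L^1$ and the successive identities $f^{(k-1)}(x)=f^{(k-1)}(0)+\int_0^x f^{(k)}$); the products $\prod_j \varphi^{(m_j)}$ lie in $L^\infty$ because $\varphi\in W^{n,\infty}$. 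The point evaluations $(f\circ\varphi)^{(i)}(0)$, $i\le n-1$, are controlled in the same way, so all lower-order contributions to $\|C_\varphi f\|_{W^{n,X}}$ are bounded by a constant times $\|f\|_{W^{n,X}}$.

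The decisive term is the top-order one, and this is where the interpolation hypothesis enters. Here I use that the composition operator $g\mapsto g\circ\varphi$ is bounded on $X$ itself. Since $\varphi$ is a bijection it acts as an isometry on $L^\infty$; on $L^1$ the change of variables gives $\|g\circ\varphi\|_1=\int_0^1|g|\,(\varphi^{-1})^\prime\le \|(\varphi^{-1})^\prime\|_\infty\,\|g\|_1$, so $C_\varphi$ is bounded on $L^1$. Because $X$ is an interpolation space between $L^\infty(0,1)$ and $L^1(0,1)$, any operator bounded on both endpoints is bounded on $X$; hence $C_\varphi$ is bounded on $X$. Multiplying the image $f^{(n)}\circ\varphi\in X$ by $(\varphi^\prime)^n\in L^\infty$ keeps it in $X$ with norm controlled by $\|f^{(n)}\|_X$, so $(f\circ\varphi)^{(n)}\in X$ and altogether $\|C_\varphi f\|_{W^{n,X}}\le C\|f\|_{W^{n,X}}$.

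Finally, for invertibility the same argument applied to $\varphi^{-1}\in W^{n,\infty}$ shows that $C_{\varphi^{-1}}$ is bounded on $W^{n,X}$, and $C_\varphi C_{\varphi^{-1}}=C_{\varphi^{-1}}C_\varphi=I$, so $C_\varphi$ is invertible. Combining this with the invertibility of $M_w$ yields that $T=M_wC_\varphi$ is bounded and invertible, with $T^{-1}=C_{\varphi^{-1}}M_{w^{-1}}$. The main obstacle is the top-order term: isolating it in the Fa\`a di Bruno expansion and pushing it through $X$ by the interpolation property is the one step that genuinely uses the hypothesis on $X$, while the remaining estimates are routine bookkeeping.
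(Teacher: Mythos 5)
Your proposal is correct and follows essentially the same route as the paper's own proof: reduce to the composition operator $C_\varphi$ (boundedness and invertibility of $M_w$ being automatic since $w$ is invertible in the Banach algebra $W^{n,X}$), expand $(f\circ\varphi)^{(n)}$ by Fa\`{a} di Bruno, handle the top-order term $(f^{(n)}\circ\varphi)(\varphi^\prime)^n$ by boundedness of composition on $X$ via interpolation between $L^1$ and $L^\infty$, and absorb the lower-order terms using the Banach function space property of $X$. You merely spell out more explicitly the endpoint estimates and the invertibility step (applying the same argument to $\varphi^{-1}$), which the paper leaves implicit.
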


\begin{proof}

It is enough to prove that the composition operator $f        \rightarrow f        \circ \varphi$ is bounded on $W^{n,X}$. Let us illustrate the proof  in case $n=2$.
$$(f       \circ \varphi )^\prime = (f^\prime        \circ \varphi )\varphi^\prime        \in C[0,1],$$
$$(f       \circ \varphi )^{\prime \prime} = (f^{\prime \prime}        \circ \varphi )(\varphi^\prime)^2 + (f^\prime        \circ \varphi )\varphi^{\prime \prime}.    $$
It remains to notice that

(a) the composition operator $x        \rightarrow x        \circ \varphi$ is bounded on $X$ because it is clearly bounded on both $L^1[0,1]$ and $L^\infty[0,1]$ and $X$ is an interpolation space,

(b) being a Banach function space $X$ is invariant under multiplication by functions from $L^\infty[0,1]$.

\noindent In general case the proof goes along the same lines with the use of the Fa\`{a} di Bruno's formula for the $n^{th}$ derivative of a composition (see e.g.~\cite{Jo}).
\end{proof}

The next lemma might be known but I was not able to find it in the literature.

\begin{lemma} \label{l8} Let $X$ be an interpolation space between $L^1(0,1)$ and $L^\infty(0,1)$. Then the Volterra operator
$$ (Vx)(t) = \int \limits_0^t x(s) ds, x        \in X, t        \in [0,1], $$
is compact on $X$.

\end{lemma}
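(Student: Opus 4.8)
The plan is to realize $V$ as the operator-norm limit on $X$ of a sequence of finite-rank operators, using the interpolation hypothesis to transfer norm estimates from the two endpoints $L^1(0,1)$ and $L^\infty(0,1)$ to $X$. For $n\in\mathds{N}$ partition $[0,1]$ into the intervals $J_k=[k/n,(k+1)/n)$, $k=0,\dots,n-1$, put $\psi_n(t)=\lfloor nt\rfloor/n$, and define
$$ (V_n x)(t)=\int_0^{\psi_n(t)} x(s)\,ds, \qquad x\in X,\ t\in[0,1]. $$
On each $J_k$ the function $V_n x$ is the constant $\int_0^{k/n}x(s)\,ds$, so the range of $V_n$ is contained in the $n$-dimensional space spanned by $\chi_{J_0},\dots,\chi_{J_{n-1}}$. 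Since $X\subseteq L^1(0,1)$ continuously, the coefficient functionals $x\mapsto\int_0^{k/n}x$ are bounded on $X$, while the indicators $\chi_{J_k}$ lie in $L^\infty(0,1)\subseteq X$; hence each $V_n$ is a bounded finite-rank operator on $X$.

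Next I would estimate the error $(Vx-V_nx)(t)=\int_{\psi_n(t)}^{t} x(s)\,ds$, a ``local'' truncation of $V$ over intervals of length at most $1/n$. On $L^\infty(0,1)$ this gives at once $\|(V-V_n)x\|_\infty\le (1/n)\|x\|_\infty$. On $L^1(0,1)$ a Fubini computation shows that for fixed $s$ the set of $t$ with $\psi_n(t)\le s\le t$ has measure at most $1/n$, whence $\|(V-V_n)x\|_1\le (1/n)\|x\|_1$. Thus $\|V-V_n\|_{L^1\to L^1}\le 1/n$ and $\|V-V_n\|_{L^\infty\to L^\infty}\le 1/n$. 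Because $X$ is an interpolation space between $L^1(0,1)$ and $L^\infty(0,1)$, there is a constant $C=C(X)$, independent of the operator, with $\|S\|_{X\to X}\le C\max(\|S\|_{L^1\to L^1},\|S\|_{L^\infty\to L^\infty})$ for every $S$ bounded on both endpoints (see \cite[Definition 4.2 and the attendant boundedness theorem]{KPS}). Applying this to $S=V-V_n$ yields $\|V-V_n\|_{X\to X}\le C/n\to 0$.

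Consequently $V$ is the limit in the operator norm of $B(X)$ of the finite-rank operators $V_n$, and is therefore compact on $X$. The only point requiring real care is the \emph{uniformity} of the interpolation constant $C$: it must be chosen once and for all, independently of $n$, which is precisely what the interpolation property of $X$ (together with the closed-graph/uniform-boundedness argument underlying it) provides. Everything else — the finite rank and $X$-boundedness of $V_n$, and the two elementary endpoint estimates — is routine.
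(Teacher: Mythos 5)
Your proposal is correct and follows essentially the same route as the paper: approximate $V$ by finite-rank operators whose operator-norm distance to $V$ tends to $0$ simultaneously on $L^1(0,1)$ and $L^\infty(0,1)$, then invoke the uniform interpolation constant (\cite[Lemma 4.3, page 20]{KPS}) to transfer the convergence to $X$ and conclude compactness. The only difference is cosmetic: the paper builds its approximants by inscribing dyadic squares into the triangular kernel of $V$, whereas you discretize the upper limit of integration via $\psi_n(t)=\lfloor nt\rfloor/n$ --- both are staircase truncations of the same kernel yielding the same $O(1/n)$ (respectively $O(2^{-n})$) endpoint estimates.
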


\begin{proof} The condition $L^\infty(0,1)        \subseteq X        \subseteq L^1(0,1)$ guarantees that $V$ is a bounded linear operator on $X$. If we can find a sequence of finite dimensional operators $V_n$ such that $V_n$ converges to $V$ in operator norm both on $L^\infty$ and $L^1$ then in virtue of $X$ being an interpolation space $V_n$ will converge to $V$ in operator norm on $X$ (see~\cite[Lemma 4.3, page 20]{KPS}) and we will conclude that $V$ is compact on $X$. To this end let us recall that
$$(Vx)(t) = \int_0^1 K(s,t) x(s) ds, x        \in X, t        \in [0,1]$$
where $K$ is the characteristic function $\chi_\Delta$ of the triangle $\Delta$ with the vertices $(0,0), (0,1)$, and $(1,1)$. Let $Q_1^0$ be the square of generation 0 with the vertices $(0, 1/2), (0, 1), (1/2, 1)$ and $(1/2, 1/2)$. The closure of the difference $\Delta \setminus Q_1^0$ consists of two triangles with disjoint interiors. We will inscribe into these triangles the squares of generation 1 : $Q_1^1$ and $Q_2^1$ similarly to how we inscribed $Q_1^0$ into $\Delta$. By continuing this way we will construct an infinite sequence of squares with pairwise disjoint interiors which will include $2^n$ squares of generation $n$ each with the side length of $\frac{1}{2^{n+1}}$.  Let
$$K_n = \sum \limits_{i=0}^n \sum \limits_{j=1}^{2^n} \chi_{Q_j^i} . $$
The integral operator $V_n$ with the kernel $K_n$ is a finite dimensional operator. Let $W_n = V - V_n$. Then $W_n$ is an integral operator on $L^\infty [0,1]$ and its kernel is the characteristic function of the set $R_n = cl(\Delta \setminus \bigcup \limits_{i=0}^n \bigcup \limits_{j=1}^{2^n} Q_j^i)$. For every $t        \in [0,1]$ the set $R_n        \cap \{(s,t): 0 \leq s \leq 1\}$ is an interval $\{(s,t): a(t) \leq s \leq b(t)\}$ where $0 \leq b(t) - a(t) \leq \frac{1}{2^{n+1}}$. Let $x        \in L^\infty$. Then for any $t        \in [0,1]$
$$ |(W_n x)(t)|  \leq \int \limits_{a(t)}^{b(t)} |x(s)| ds \leq \frac{1}{2^{n+1}} \|x\|_\infty . $$
 Therefore $\|W_n \| \leq \frac{1}{2^{n+1}} $.

Now consider the operator $W_n$ on $L^1[0,1]$. Then the adjoint $W_n^\prime$ acts on $L^\infty [0,1]$. It is an integral operator with the kernel
$S(s,t) = K(t,s) - K_n(t,s)$. Then the same reasoning as above shows that $\|W_n^\prime\| \leq \frac{1}{2^{n+1}}$.
\end{proof}

\begin{lemma} \label{l9} Assume that $X$, $w$, and $\varphi$ satisfy conditions of Lemma~\ref{l7}. Let $T: Tf = w(f       \circ \varphi ), f        \in W^{n,X}$ be the corresponding weighted composition operator. Assume additionally that $\varphi (0) = 0$. Consider the subspace $W_0$ of $W^{n,X}$ defined as
$$W_0 = \{f        \in W^{n,X}: f^{(j)}(0) = 0, j= 0,1, \ldots , n-1 \}.$$
Then $TW_0        \subseteq W_0$ and the restriction of $T$ on $W_0$ is similar to the operator $A +K$ on $X$, where
$$Ax = w (\varphi^\prime)^n ( x       \circ \varphi ), x        \in X \eqno{(4.0)}$$
and $K$ is a compact operator on $X$.
\end{lemma}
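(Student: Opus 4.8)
The plan is to identify $W_0$ isometrically with $X$ via the map $\Phi\colon W_0\to X$, $\Phi f=f^{(n)}$, and to read off $A+K$ as the conjugate $\Phi\,(T|_{W_0})\,\Phi^{-1}$. First I would check $TW_0\subseteq W_0$: since $\varphi(0)=0$, the Fa\`a di Bruno formula shows that $(f\circ\varphi)^{(j)}(0)$ is a polynomial in $f^{(0)}(0),\dots,f^{(j)}(0)$, so it vanishes for $j\le n-1$ when $f\in W_0$; Leibniz then gives $(w\,(f\circ\varphi))^{(j)}(0)=0$ for $j\le n-1$, i.e. $Tf\in W_0$. That $\Phi$ is an isometric isomorphism is immediate from the norm on $W^{n,X}$: on $W_0$ it reduces to $\|f\|=\|f^{(n)}\|_X$, while $\Phi^{-1}$ is $n$-fold integration with zero initial data, so that $f^{(n-j)}=V^{j}(f^{(n)})$ for $j=0,\dots,n$, where $V$ is the Volterra operator of Lemma~\ref{l8}.

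Next I would compute the conjugate operator. Writing $x=f^{(n)}=\Phi f$ and expanding by Leibniz,
\[
(w\,(f\circ\varphi))^{(n)}=\sum_{k=0}^{n}\binom{n}{k}\,w^{(k)}\,(f\circ\varphi)^{(n-k)},
\]
and then expanding each $(f\circ\varphi)^{(n-k)}$ by Fa\`a di Bruno, one isolates the single top-order term $w\,(\varphi')^{n}\,(f^{(n)}\circ\varphi)=w\,(\varphi')^{n}\,(x\circ\varphi)=Ax$, which is precisely $(4.0)$. Every remaining summand carries a factor $f^{(j)}\circ\varphi$ with $j\le n-1$; since $f^{(j)}=V^{\,n-j}x$ with $n-j\ge 1$, each such summand is an operator of the shape $M_{c}\,C_\varphi\,V^{\,n-j}$, where $C_\varphi g=g\circ\varphi$ and $M_{c}$ denotes multiplication by a coefficient $c$ built from $w^{(k)}$ ($k\le n$) and from derivatives $\varphi^{(l)}$ ($l\le n$). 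Thus $K:=\Phi\,(T|_{W_0})\,\Phi^{-1}-A$ is a finite sum of such operators, and the problem reduces to showing each one is compact on $X$.

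For compactness I would invoke Lemma~\ref{l8}: $V$ is compact on $X$, hence so is $V^{\,n-j}$ whenever $n-j\ge 1$; and $C_\varphi$ is bounded on $X$ by Lemma~\ref{l7}. When the coefficient $c$ lies in $L^\infty$---which covers all $\varphi^{(l)}$, $l\le n$ (as $\varphi\in W^{n,\infty}$), and all $w^{(k)}$ with $k\le n-1$ (as $w\in C^{(n-1)}[0,1]$)---the multiplier $M_{c}$ is bounded on $X$, and therefore $M_{c}\,C_\varphi\,V^{\,n-j}$ is compact, being the composition of a compact operator with bounded ones.

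The one delicate summand, and what I expect to be the main obstacle, is the term $k=n$, namely $K_0 x=w^{(n)}(f\circ\varphi)=w^{(n)}\big((V^{n}x)\circ\varphi\big)$, because its coefficient $w^{(n)}$ lies only in $X$, not in $L^\infty$, so $M_{w^{(n)}}$ is not bounded on $X$ and the naive factorization breaks. The route around this is to exploit that $V^{n}x=f\in C[0,1]$ with $\|V^{n}x\|_\infty\le c\,\|x\|_X$ (iterated integration together with $X\subseteq L^1$): hence the set $\{(V^{n}x)\circ\varphi:\|x\|_X\le 1\}$ is simultaneously bounded in $L^\infty$ and relatively compact in $X$ (being the image of the unit ball under the compact operator $C_\varphi V^{n}$). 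It then remains to verify that multiplication by the fixed function $w^{(n)}\in X$ sends an $L^\infty$-bounded, $X$-relatively-compact set to an $X$-relatively-compact set. I would prove this by truncation: split $w^{(n)}=w^{(n)}\chi_{\{|w^{(n)}|\le N\}}+w^{(n)}\chi_{\{|w^{(n)}|>N\}}$; the first multiplier is bounded and contributes a relatively compact image, while the tail is controlled by the Banach-function-space lattice inequality $\|w^{(n)}\chi_{\{|w^{(n)}|>N\}}\,g\|_X\le \|g\|_\infty\,\|w^{(n)}\chi_{\{|w^{(n)}|>N\}}\|_X$, uniformly small on the $L^\infty$-bounded set once $\|w^{(n)}\chi_{\{|w^{(n)}|>N\}}\|_X\to 0$. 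Securing this last convergence is exactly where the interpolation / rearrangement-invariant structure of $X$ must be used; alternatively, for $n\ge 2$ the family $\{V^{n}x\}$ is uniformly Lipschitz (since $f'=V^{\,n-1}x$ is $L^\infty$-bounded), hence relatively compact in $C[0,1]$, and then boundedness of $M_{w^{(n)}}\colon C[0,1]\to X$ gives the conclusion directly. Reconciling these two routes so as to cover every interpolation space $X$ (including $X=L^1$, where equicontinuity fails but order-continuity of the norm rescues the truncation estimate) is the step that requires the most care.
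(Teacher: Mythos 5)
Your proposal follows the same skeleton as the paper's proof: identify $W_0$ with $X$ via $f\mapsto f^{(n)}$, expand $(w(f\circ\varphi))^{(n)}$ by Leibniz and Fa\`a di Bruno, recognize the top-order term as $A$, and dispose of the remaining terms, all of which factor through the Volterra operator, using Lemma~\ref{l8}. For $n=2$ your decomposition is literally the paper's formula $(4.1)$.

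Where you go beyond the paper is the term with coefficient $w^{(n)}$, and you are right that this is the genuinely delicate point: the paper simply says the conclusion ``follows from $(4.1)$ and Lemma~\ref{l8}'', but for the summand $M_{w^{(n)}}C_\varphi V^{n}$ this is not immediate, since $w^{(n)}$ lies only in $X$ and multiplication by it is \emph{not} bounded on $X$; the other summands have coefficients in $L^\infty$ (as you note, $w^{(k)}$ is continuous for $k\le n-1$ and $\varphi^{(l)}\in L^\infty$) and are handled exactly as the paper intends. Your Arzel\`a--Ascoli route settles this term completely for $n\ge 2$. For $n=1$, the loose end you flag does close, and the missing observation is a dichotomy: for an interpolation space $X$ between $L^1$ and $L^\infty$, either the unit ball of $X$ is uniformly integrable, or $X=L^1$ up to equivalent norm. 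Indeed, if uniform integrability fails, pick $\delta>0$, $\|x_k\|_X\le 1$ and $t_k\downarrow 0$ with $\int_0^{t_k}x_k^*\ge\delta$; given $y\in L^1$ with $\|y\|_1\le\delta/2$, choose a subsequence with $\int_0^{t_{k_j}}y^*\le 2^{-j-1}\delta$, place disjointly supported copies $u_j$ of $x_{k_j}^*\chi_{[0,t_{k_j}]}$ in $(0,1)$, and set $x=\sum_j 2^{-j}u_j$; then $x\in X$ and $\int_0^t y^*\le\int_0^t x^*$ for every $t$, so the Calder\'on--Mityagin property of interpolation spaces gives $y\in X$, i.e. $L^1\subseteq X$. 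In the first case of the dichotomy your Arzel\`a--Ascoli argument works even for $n=1$ (equicontinuity of $\{Vx\}$ is exactly uniform integrability of the ball); in the second case $L^1$ has order-continuous norm and your truncation argument works. Note that the truncation route alone genuinely cannot be pushed through all spaces: for $X=L^{p,\infty}$ and $w^{(n)}(t)=t^{-1/p}$ one has $\|w^{(n)}\chi_{\{|w^{(n)}|>N\}}\|_X=1$ for every $N$, so the two mechanisms are both needed and, by the dichotomy, together they suffice. With that supplement your proof is complete in all cases, and at the one critical step it is more careful than the paper's own.
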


\begin{proof} Like in the proof of Lemma~\ref{l7} we will look at the case $n=2$. Then
$$(w(f       \circ \varphi ))^\prime = w^\prime (f       \circ \varphi ) + w \varphi^\prime (f^\prime        \circ \varphi )$$
and
$$(w(f        \circ \varphi ))^{\prime \prime} = w^{\prime \prime}(f       \circ \varphi ) + 2w^\prime \varphi^\prime (f^\prime        \circ \varphi ) + w \varphi^{\prime \prime} (f^\prime        \circ \varphi ) + w (\varphi^\prime)^2 (f^{\prime \prime}        \circ \varphi ) . $$
It follows that $TW_0        \subseteq W_0$. Next notice that the norm on $W^{2,X}$ is equivalent to the norm
$$\|f\| = |f(0)| + |f^\prime(0)| + \|f^{\prime \prime}\|_X.$$
 Therefore the operator $Jf = f^{\prime \prime}$ is an isomorphism between $W_0$ and $X$, and for almost all $t        \in [0,1]$ we have
$$ (JTJ^{-1}x)(t) = w(t)[\varphi^\prime (t)]^2 x(\varphi (t)) + w(t) \varphi^{\prime \prime}(t) \int \limits_0^{\varphi (t)} x(s) ds $$
$$ + \;  2w^\prime(t) \varphi^\prime(t)\int \limits_0^{\varphi (t)} x(s) ds + w^{\prime \prime}(t) \int \limits_0^{\varphi (t)} \Big{(} \int \limits_0^s x(u) du \Big{)} ds .      \eqno{(4.1)} $$
The conclusion of the lemma for $n=2$ follows now from $(4.1)$ and Lemma~\ref{l8}.

For arbitrary $n$ we can use the standard formula for the $n^{th}$ derivative of a product and the Fa\`{a} di Bruno's formula.
\end{proof}

\begin{corollary} \label{c4} $\sigma_i(T) = \sigma_i(A), i= 1,2,3,4$, and $\sigma_2(T^\prime)= \sigma_2(A^\prime)$.
\end{corollary}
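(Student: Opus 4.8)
The plan is to deduce the corollary from Lemma~\ref{l9} together with three standard stability properties of the essential spectra, exactly as in the opening of the proof of Theorem~\ref{t1.1}. By Lemma~\ref{l9} the subspace $W_0$ is $T$-invariant and the restriction $T|W_0$ is similar to $A+K$ with $K$ compact on $X$; moreover $W_0$ has finite codimension (namely $n$) in $W^{n,X}$, since the map $f\mapsto(f(0),f^\prime(0),\dots,f^{(n-1)}(0))$ sends $W^{n,X}$ onto $\mathds{C}^n$ with kernel $W_0$.

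First I would record the three invariances to be used, each valid for $i=1,2,3,4$: (i) $\sigma_i$ is unchanged when an operator is replaced by its restriction to a closed invariant subspace of finite codimension; (ii) $\sigma_i$ is a similarity invariant; and (iii) $\sigma_i$ is invariant under compact perturbations. Granting these, the chain
$$\sigma_i(T)=\sigma_i(T|W_0)=\sigma_i(A+K)=\sigma_i(A),\quad i=1,2,3,4,$$
follows at once: the first equality is (i), the second is (ii) applied to the similarity furnished by Lemma~\ref{l9}, and the third is (iii) with the compact operator $K$ supplied by Lemmas~\ref{l8} and~\ref{l9}.

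For the adjoint statement I would argue analogously but keep track of defect numbers. Recall from the introduction that $\lambda\in\sigma_2(S^\prime)$ precisely when $\lambda I-S$ is either not semi-Fredholm or semi-Fredholm with $def(\lambda I-S)=\infty$. Passing to the restriction on the finite-codimension invariant subspace $W_0$ changes $def(\lambda I-T)$ only by a finite amount and preserves the semi-Fredholm property, so $\sigma_2(T^\prime)=\sigma_2((T|W_0)^\prime)$; equivalently, one takes Banach adjoints throughout the short exact sequence $0\to W_0\to W^{n,X}\to W^{n,X}/W_0\to 0$ and uses that the quotient is finite dimensional. Similarity then gives $\sigma_2((T|W_0)^\prime)=\sigma_2((A+K)^\prime)$, and since $(A+K)^\prime=A^\prime+K^\prime$ with $K^\prime$ compact, compact-perturbation invariance of $\sigma_2(\,\cdot\,)$ yields $\sigma_2((A+K)^\prime)=\sigma_2(A^\prime)$. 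Chaining these equalities gives $\sigma_2(T^\prime)=\sigma_2(A^\prime)$.

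The only point requiring genuine care — the \emph{main obstacle} — is the adjoint bookkeeping in the last paragraph: one must verify that restriction to the finite-codimension invariant subspace $W_0$ leaves both the semi-Fredholm property and the condition $def=\infty$ intact, so that $\sigma_2(\,\cdot^\prime\,)$ is indeed a finite-codimension invariant. This is where the duality between the nullity and defect of $T$ and of its adjoint, together with the exactness argument above, must be invoked explicitly. All the remaining equalities are purely formal consequences of similarity and of the compactness of $K$.
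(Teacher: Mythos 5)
Your proposal is correct and follows essentially the same route as the paper: the paper states Corollary~\ref{c4} as an immediate consequence of Lemma~\ref{l9} (similarity of $T|W_0$ to $A+K$ with $K$ compact) together with the finite codimension of $W_0$ and the standard invariance of $\sigma_i$, $i=1,2,3,4$, and of $\sigma_2(\,\cdot^\prime\,)$ under similarity, compact perturbation, and restriction to a finite-codimension invariant subspace — exactly the argument pattern used in the proof of Theorem~\ref{t1.1}. Your extra care with the adjoint/defect bookkeeping is a sound elaboration of what the paper leaves implicit.
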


Recall that a Banach lattice $X$ is said to have the Fatou property if for any positive $x       \in X$ and for any net $\{x_\alpha \}       \subset X$ such that $x_\alpha \geq 0$ and $x_\alpha \nearrow x$ we have $\sup \limits_\alpha \|x_\alpha \| = \|x\|$.

If a Banach function space $X$, $L^\infty (0,1)       \subseteq X       \subseteq L^1 (0,1)$, has the Fatou property then $X$ is an interpolation space (see~\cite{KPS}).

\begin{theorem} \label{t6} Let $X$ have the Fatou property and let $T$ be an invertible weighted automorphism of $W^{n,X}$, $\varphi (0) =0$, and $mes(F) = 0$ where $F = \{t        \in [0,1] : \varphi (t) = t\}$. Let $A$ be the operator on $X$ defined by (4.0). Then

\noindent (1) The sets $\sigma_i(T), i=1,2,3,4$, and the set $\sigma_2(T^\prime)$ are rotation invariant.

\noindent (2)$\sigma (A)        \subseteq \sigma (T)$.

\noindent (3) The set $\tau = \sigma (T) \setminus \sigma (A)$ consists of isolated eigenvalues of $T$ of finite multiplicity, and therefore is at most countable.

\noindent (4) $\sigma_{a.p}(T) = \sigma_2(A)     \cup      \tau$ and $\sigma_{a.p}(T^\prime) = \sigma_2(A^\prime)        \cup        \tau$.

\end{theorem}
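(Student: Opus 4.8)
The whole theorem is meant to be read off from the operator $A$ on $X$ through the similarity of Lemma~\ref{l9} and the resulting Corollary~\ref{c4}, exactly as Theorem~\ref{t1.1} was read off from its $A$. So the first step is to record what $mes(F)=0$ buys for $A$. Since $\varphi(0)=0$, the map $\varphi$ is orientation preserving, hence every periodic point is fixed and the set of periodic points of $A$ is precisely $F$, a null set. Therefore, by the rotation-invariance theorem for weighted composition operators (\cite[Theorem 3.2]{Ki3}, as used in Corollary~\ref{c2}) the sets $\sigma_i(A)$, $i=1,2,3,4$, and $\sigma_2(A^\prime)$ are rotation invariant, and by the analogue of Corollary~\ref{c1.10}(3) one has $\sigma_3(A)=\sigma(A)$, so also $\sigma_4(A)=\sigma_5(A)=\sigma(A)$. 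Part $(1)$ then follows at once from Corollary~\ref{c4}, which identifies each $\sigma_i(T)$ ($i\le 4$) with $\sigma_i(A)$ and $\sigma_2(T^\prime)$ with $\sigma_2(A^\prime)$. Part $(2)$ is just as short: $\sigma(A)=\sigma_3(A)=\sigma_3(T)\subseteq\sigma(T)$.

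For part $(3)$ I would follow Lemma~\ref{l3} and the proof of Theorem~\ref{t1.1}. By Corollary~\ref{c4}, $\sigma_4(T)=\sigma_4(A)=\sigma(A)$, so for $\lambda\in\tau=\sigma(T)\setminus\sigma(A)$ the operator $\lambda I-T$ is Fredholm of index $0$; being noninvertible it has $0<\dim\ker(\lambda I-T)=\dim\mathrm{coker}(\lambda I-T)<\infty$, i.e. $\lambda$ is an eigenvalue of finite multiplicity. To see these eigenvalues are isolated I would pass to the operator $A+K$ of Lemma~\ref{l9}: on the open set $\mathds{C}\setminus\sigma(A)$ the map $\lambda\mapsto(\lambda I-A)^{-1}K$ is analytic and compact-operator-valued, so by the analytic Fredholm theorem the spectrum of $A+K$ is discrete, consisting of isolated eigenvalues of finite algebraic multiplicity, on every connected component of $\mathds{C}\setminus\sigma(A)$ that meets the resolvent set. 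The unbounded component meets $\rho(T)$ (large $\lambda$), and the component containing $0$ meets $\rho(T)$ because $T$, hence $A+K$, is invertible. Once this is known, $\tau\subseteq\sigma(T)\setminus\sigma_1(T)$ is a discrete set, hence at most countable, and $\sigma_5(T)=\sigma(A)$ drops out of the definition of $\sigma_5$ exactly as in Theorem~\ref{t1.1}.

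The hard part will be the bounded gap components. Because we no longer assume $\varphi(x)<x$, the set $\sigma(A)$ need not be connected (Example~\ref{e2} already exhibits $\sigma(A)$ as two circles), so $\mathds{C}\setminus\sigma(A)$ may possess annular components $\{r<|\lambda|<R\}$ trapped between circles of $\sigma(A)$, and a priori such an annulus could lie entirely in $\sigma(T)$, defeating the analytic-Fredholm argument. To exclude this I would show, in the spirit of Lemma~\ref{l3}, that an eigenvalue $\lambda\in\tau$ must satisfy $|\lambda|>\rho(A)$ or $|\lambda|<1/\rho(A^{-1})$. Iterating $Tf=\lambda f$ gives $|w_n|\,(f\circ\varphi^n)=\lambda^n f$; if $|\lambda|$ fell in a gap at radius $s$, I would invoke the $\varphi$-invariant hyperbolic splitting $[0,1]=E_-\cup E_+$ attached to that radius (the decomposition furnished by \cite[Theorem 3.10]{Ki1} and used in Lemma~\ref{l2}) and the growth rate of $|w_n|\,(\varphi^n)^\prime$ to force $f$ to vanish a.e.\ on the piece $E_+$ where the orbit is expanded and, by the symmetric estimate applied to $T^{-1}$, on $E_-$ as well, so that $f\equiv 0$, a contradiction. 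This confines $\tau$ to the unbounded component and to the component containing $0$, both of which meet $\rho(T)$, completing part $(3)$.

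For part $(4)$ I would split $\sigma(T)=\sigma_{a.p.}(T)\cup\sigma_r(T)$ and use $\sigma_2(T)=\sigma_2(A)$ from Corollary~\ref{c4}. The inclusion $\sigma_2(A)\cup\tau\subseteq\sigma_{a.p.}(T)$ is immediate, since $\sigma_2(T)\subseteq\sigma_{a.p.}(T)$ always and the points of $\tau$ are eigenvalues. For the reverse inclusion take $\lambda\in\sigma_{a.p.}(T)\setminus\sigma_2(T)$; then $\lambda I-T$ is upper semi-Fredholm with $0<\dim\ker(\lambda I-T)<\infty$, so $\lambda$ is a finite-multiplicity eigenvalue and $\lambda\notin\sigma_1(T)=\sigma_1(A)$. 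If $\lambda$ were in $\sigma(A)$, then, since $\sigma(A)=\sigma_2(A)\cup\sigma_2(A^\prime)$ and $\lambda\notin\sigma_2(A)=\sigma_2(T)$, we would have $\lambda\in\sigma_2(A^\prime)=\sigma_2(T^\prime)$; but on the semi-Fredholm set these operators obey the dichotomy that the kernel vanishes whenever the deficiency is infinite (\cite{Ki3}, the phenomenon recorded in Theorem~\ref{t1.3}(7)--(8)), forcing $\dim\ker(\lambda I-T)=0$ and contradicting $\lambda\in\sigma_{a.p.}(T)$. Hence $\lambda\in\tau$, giving $\sigma_{a.p.}(T)=\sigma_2(A)\cup\tau$. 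The identity $\sigma_{a.p.}(T^\prime)=\sigma_2(A^\prime)\cup\tau$ follows by the same reasoning applied to $T^\prime$, together with $\sigma_2(T^\prime)=\sigma_2(A^\prime)$ and the fact that the isolated eigenvalues $\tau$ are Riesz points of $\sigma(T)=\sigma(T^\prime)$ and hence eigenvalues of $T^\prime$ as well.
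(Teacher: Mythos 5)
Your skeleton (transfer everything to $A$ via Lemma~\ref{l9} and Corollary~\ref{c4}, then treat $\tau$ by Fredholm index zero plus an isolation argument) is the paper's, but at every point where the proof has real work to do --- precisely because $A$ acts on a general Banach function space $X$ rather than on a $C(K)$-space --- you substitute a citation of a $C(K)$-setting result that has not been established for operators on $X$. In part (1), \cite[Theorem 3.2]{Ki3} and Corollary~\ref{c1.10} concern weighted compositions on $C[0,1]$, or on $L^\infty\cong C(Q)$ via the Gelfand transform; here $A$ lives on $X$ (e.g.\ $L^p$), so they do not apply. The paper proves rotation invariance directly: Frolik's theorem makes the induced homeomorphism of the Gelfand compact $Q$ of $L^\infty$ aperiodic, Theorem B.1 of \cite{AAK} produces unimodular $g$ with $\|g\circ\varphi-\alpha g\|$ small, and conjugating $A$ by the multiplication operator $G$ yields estimate (4.2), whence $\sigma_i(\alpha A)=\sigma_i(A)$. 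In part (2) your entire argument is the assertion that ``$\sigma_3(A)=\sigma(A)$ by the analogue of Corollary~\ref{c1.10}(3)''; but that analogue \emph{is} the nontrivial content of the paper's part (2), proved there by (i) cutting an eigenfunction of $A$ by characteristic functions of $\varphi$-invariant sets $\bigcup_i\varphi^i(E_1)$ to get infinitely many disjoint eigenvectors, contradicting $\lambda\notin\sigma_2(A)$, and (ii) for $\lambda\in\sigma_r(A)$, passing to the band $X_n^\prime$ of order continuous functionals and using the Fatou property ($(o)$-reflexivity, $(X_n^\prime)_n^\prime=X$) to manufacture an eigenfunctional of $A^\prime$ and repeat (i). You never use the Fatou property at all, which signals that the key step is being assumed rather than proved. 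Part (4) has the same defect: the ``deficiency infinite forces kernel zero'' dichotomy is a feature proved in \cite{Ki3} for the $C(K)$/$C^{(n)}$ setting (cf.\ Theorem~\ref{t1.3}(7)--(8)), not a general semi-Fredholm fact and not available here; the paper instead deduces (4) from the identities $\sigma_{a.p.}(A)=\sigma_2(A)$ and $\sigma_{a.p.}(A^\prime)=\sigma_2(A^\prime)$, which are byproducts of its proof of part (2).

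In part (3) your treatment of the unbounded component and the component of $0$ is fine (it is the paper's cases (a) and (b)), but for the bounded annular gaps you set out to prove something stronger than the theorem asserts: that $\tau$ misses them entirely, via a pointwise vanishing argument for the eigenfunction $f$. This step would fail in the stated generality. The gaps of $\sigma(A)$ are determined by growth rates of the weight $w(\varphi^\prime)^n$ measured through the norm of $X$ (Boyd-index-type exponents), while the iteration $w_k(x)f(\varphi^k(x))=\lambda^k f(x)$ for a continuous eigenfunction is governed by $w$ alone together with the order of vanishing of $f$ at fixed points (a Sobolev-embedding exponent). These exponents match for $X=L^p$ --- which is why the exclusion works for $Lip_1$ and $W^{1,p}$, cf.\ Example~\ref{e2} and Proposition~\ref{p1} --- but for a general interpolation space with only the Fatou property they need not; moreover the sets $E_\pm$ furnished by \cite[Theorem 3.10]{Ki1} are merely measurable $\varphi$-invariant sets, so ``forcing $f$ to vanish a.e.\ on $E_+$'' by pointwise iteration of a smooth function is not a legitimate move. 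The paper sidesteps all of this: in its case (c) it invokes \cite[Theorem 13.1]{AAK} to split $X=X_1\oplus X_2$ into $A$-invariant bands with $\sigma(A,X_1)\subset\{|\alpha|<|\lambda|\}$ and $\sigma(A,X_2)\subset\{|\alpha|>|\lambda|\}$, splits $W_0=W_0^1\oplus W_0^2$ accordingly, and observes that $\lambda$ lies beyond $\rho_e(T,W_0^1)$ and beyond $1/\rho_e(T^{-1},W_0^2)$, so the standard Riesz theory applies on each summand; eigenvalues inside gaps are not excluded --- they are simply shown to be isolated and of finite multiplicity, which is all the theorem claims.
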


\begin{proof} (1). By Corollary~\ref{c4} in order to prove (1) it is enough to prove that the sets $\sigma_i(A), i=1,2,3,4$, are rotation invariant.
Let $\alpha        \in \mathds{C}$, $|\alpha| = 1$. Let $Q$ be the Gelfand compact of the algebra $L^\infty(0,1)$. Then $Q$ is a   hyperstonian compact space. Let $\tilde{\varphi }$ be the homeomorphism of $Q$ corresponding to the invertible disjointness preserving operator $A$ (see~\cite{AAK}). The condition $mes(F) =0$ and the Frolik's theorem~\cite{Wa} guarantee that $\tilde{\varphi }$ has no periodic points in $Q$. By Theorem B.1 in~\cite[p. 147]{AAK} for any positive $\varepsilon$ there is $g        \in C(Q)$ such that
$|g|        \equiv        1$ on $Q$ and $\|g        \circ \varphi - \alpha g\| \leq \varepsilon$. Let $G$ be the operator on $X$ of multiplication on $g$. Then for any $x       \in X$
$$\|(G^{-1}AG - \alpha A)x\| = \|g^{-1}(g       \circ \varphi )Ax - \alpha Ax\| \leq \varepsilon \|A|\|\|x\|. \eqno{(4.2)}$$
It follows immediately from (4.2) that the sets $\sigma_i(A), i=1,2,3,4$ are rotation invariant.

\noindent (2) Let $\lambda       \in \sigma (A) \setminus \sigma (T)$. Assume first that $\lambda       \in \sigma_{a.p.}(A)$. By Corollary~\ref{c4} $\lambda \not       \in \sigma_2(A)$ and therefore $\lambda$ is an eigenvalue of $A$. Let $f       \in X$ be a corresponding eigenfunction. Because $mes(F) =0$ we can find an interval $(a,b)       \subseteq (0,1)$ such that $[a,b]       \cap F = \{a,b\}$ and $f \not       \equiv       0$ on $(a,b)$. Let $E$ be a subset of $(a,b)$ such that $mes(E) > 0$, $|f| > c > 0$ on $E$, and the sets $\varphi^i(E), i       \in \mathds{Z}$ are pairwise disjoint. Let $E_1$ be a measurable subset of $E$ of positive measure and
$G=\bigcup \limits_{i       \in \mathds{Z}} \varphi^i(E_1)$. Then clearly $\chi_G f       \in \ker{(\lambda I - A)}$. Thus we can construct a sequence $f_n$ of pairwise disjoint elements of $X$ such that $\|f_n\| = 1$ and $f_n       \in \ker{(\lambda I - A)}$, in contradiction with our assumption that $\lambda \not       \in \sigma_2(A)$.

Next assume that $\lambda       \in \sigma_r(A)$. Notice that by Corollary~\ref{c4} $\lambda \not       \in \sigma_2(A^\prime)$. Because $X$ is a Banach function space the set $X^\prime_n$ of order continuous bounded functionals on $X$ is total on $X$ and moreover $X^\prime$ is the direct sum of disjoint bands $X^\prime_n$ and $X^\prime_s$ - the band of bounded singular functionals on $X$ (see e.g.~\cite{KA}). The band $X_n^\prime$ is a Banach function space on $[0,1$. Next, because $A^\prime$ and $(A^\prime)^{-1}$ are order continuous operators on $X^\prime$ (indeed they are invertible disjointness preserving operators) we have $A^\prime X^\prime_n = X^\prime_n$ and $A^\prime X^\prime_s = X^\prime_s$. Because $(\lambda I - A^\prime)X^\prime = X^\prime$ we have $(\lambda I - A^\prime)X^\prime_n = X^\prime_n$. If the operator $\lambda I - A^\prime$ were invertible on $X_n\prime$ then $\lambda I - A^{\prime \prime}$ would be invertible on $(X_n^\prime)_n^\prime$. But, because $X$ has the Fatou property it is $(o)$-reflexive, i.e. $(X_n^\prime)_n^\prime = X$ and therefore $\lambda I - A$ would be invertible on $X$, a contradiction. We conclude that there is a $\Phi       \in X_n^\prime$ such that $\Phi       \neq       0$ and $A^\prime \Phi = \lambda \Phi$.
Finally notice that
\[
  A^\prime (fx) = (f       \circ \varphi^{-1}) Ax, \; x       \in X_n^\prime , \; f       \in L^\infty (0,1) \eqno{(4.3)}
\]
By using (4.3) and the same kind of reasoning as in the case when $\lambda       \in \sigma_{a.p.}(A)$ we come to a contradiction.

\noindent (3) Let $\lambda     \in \sigma (T) \setminus \sigma (A)$. Because $mes(F)=0$ the set $\sigma (A)$ is rotation invariant (see~\cite[Theorem 13.4, page 110]{AAK}). Therefore there are three possibilities.

(a) $|\lambda| > \rho (A) = \rho_e(T)$. Then it is well known (see e.g~\cite[Theorem 4, page 173]{Mu}
or~\cite[Theorem 4.3.18, page 122]{Da}) that $\lambda$ is an isolated eigenvalue of $T$ of finite multiplicity.

(b) $1/|\lambda| > \rho (A^{-1})$. Then again $\lambda$ is an isolated eigenvalue of $T$ of finite multiplicity because $\rho_e(T^{-1})=\rho (A^{-1})$.

(c) \[
      \frac{1}{\rho (A^{-1})} < |\lambda| < \rho (A)
    \]
    and $\lambda \Gamma     \cap \sigma (A) = \emptyset$. In this case (see~\cite[Theorem 13.1, page 106]{AAK}) $X$ is the direct sum of two disjoint $A$-invariant bands $X_1$ and $X_2$ such that
    \[
      \sigma (A, X_1) = \sigma^1 = \sigma (A)     \cap \{\alpha     \in \mathds{C} : |\alpha| < |\lambda|\}
    \]
    and
    \[
      \sigma (A, X_2) = \sigma^2 = \sigma (A)     \cap \{\alpha     \in \mathds{C} : |\alpha| > |\lambda|\}.
    \]
    Let $E_i = supp(X_i), i=1,2$. Then $E_1, E_2$ are disjoint $\varphi$-invariant subsets of $[0,1]$, $mes(E_i) >0, i=1,2$, and $mes(E_1) + mes(E_2)=1$. Let $W_0$ be the subspace of $W^{n,X}$ introduced in Lemma~\ref{l9}. Then $W_0 = W_0^1     \oplus     W_0^2$ where
    \[
      W_0^i = \{f     \in W_0 : f^{(n)}     \in X_i, i=1,2\} .
    \]
It is easy to see that $TW_0^i     \subseteq W_0^i, i=1,2 $, $\rho_e(T,W_0^1)=\rho (A, X_1)$, and $\rho_e(T^{-1}, W_0^2) = \rho (A^{-1}, X_2)$. The statement that $\lambda$ is an isolated eigenvalue of $T$ of finite multiplicity follows from the fact that $dim(W^{n,X}/W_0) < \infty$.
\noindent (2). It suffices to notice that the proof of part (1) above shows that $\sigma_{a.p.}(A) = \sigma_2(A)$ and $\sigma_{a.p.}(A) = \sigma_2(A)$.

\end{proof}

\begin{remark} \label{r4.1} While most of the usually considered rearrangement-invariant Banach function spaces have the Fatou property it is worth noting that the statement of Theorem~\ref{t6} remains true without assuming it. Indeed, it is enough to prove that $\sigma_{a.p}(A^\prime) = \sigma_2(A^\prime)$. Assume that $\lambda     \in \sigma_{a.p}(A^\prime) \setminus \sigma_2(A^\prime)$.  The proof of Lemma 5 in~\cite{Ki2} shows that there are pairwise disjoint elements $u_i        \in X^\prime, i        \in \mathds{Z}$ such that $u_0$ is not a finite linear combination of atoms, $A^\prime u_i = \lambda u_{i+1}, i        \in \mathds{Z}$, and $\sum \limits_{-\infty}^\infty \|u_i\| < \infty$. That immediately implies a contradiction with
$dim \ker{(\lambda I - A^\prime)} < \infty$.

\end{remark}

We can obtain more detailed information about the essential spectra of invertible weighted compositions on $W^{n,X}$ at the expense putting more stringent conditions on $X$. We start with the following proposition.

\begin{proposition} \label{p1} Let $X$ be a rearrangement-invariant Banach space intermediate between $L^1(0,1)$ and $L^\infty(0,1)$ and such that its Boyd indices (see~\cite[page 165]{BS}) satisfy the condition $0 < \alpha_X = \beta_X < 1$. Let $\varphi$ be an invertible element of $Lip_1(0,1)$ that  maps $[0,1]$ onto itself. Let $p = \frac{1}{\alpha_X}$.  Then the operator $U$,
$$ Ux = (\varphi^\prime)^{\frac{1}p{}} (x        \circ \varphi ), x        \in X$$
is bounded on $X$ and moreover $\sigma (U)        \subseteq \Gamma$.
\end{proposition}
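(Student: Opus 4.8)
The plan is to separate the two assertions: boundedness of $U$ is elementary, while the inclusion $\sigma(U,X)\subseteq\Gamma$ will follow once I show $\rho(U,X)\le 1$ and $\rho(U^{-1},X)\le 1$ simultaneously. Note first that $p=1/\alpha_X\in(1,\infty)$ since $0<\alpha_X<1$, and that I may assume $\varphi$ is increasing, so $\varphi'\ge 0$ a.e. and $(\varphi')^{1/p}$ is a legitimate weight (otherwise one replaces $\varphi'$ by $|\varphi'|$ throughout). The object driving the estimates is the explicit form of the iterates: by the chain rule $(\varphi^n)'=\prod_{k=0}^{n-1}\varphi'\circ\varphi^k$, so
\[ U^n x=\bigl((\varphi^n)'\bigr)^{1/p}\,(x\circ\varphi^n),\qquad n\in\mathds{Z}, \]
and $U^{-1}$ is an operator of exactly the same type with $\varphi$ replaced by $\varphi^{-1}$. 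Hence it suffices to treat $U$ and then apply the identical bound to $\varphi^{-1}$.

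Boundedness on $X$ is the easy step: the composition $x\mapsto x\circ\varphi$ is an isometry of $L^\infty(0,1)$ (as $\varphi$ is onto) and is bounded on $L^1(0,1)$ (with norm $\le\|(\varphi^{-1})'\|_\infty$, by the substitution $s=\varphi(t)$), hence it is bounded on the interpolation space $X$; since $(\varphi')^{1/p}\in L^\infty$, multiplication by it is bounded on the Banach function space $X$, so $U$ (and likewise $U^{-1}$) is bounded, and $U$ is invertible on $X$. For the spectral radius I exploit that $U$ is a surjective \emph{isometry} of $L^p$: the substitution $s=\varphi(t)$ gives $\int_0^1|(\varphi')^{1/p}(x\circ\varphi)|^p\,dt=\int_0^1|x|^p\,ds$. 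The same change of variables applied to $U^n$ yields, for every $q$,
\[ \|U^n x\|_q^q=\int_0^1\bigl|(\varphi^{-n})'(s)\bigr|^{1-q/p}\,|x(s)|^q\,ds, \]
whence $\|U^n\|_{L^{q}\to L^{q}}\le\|(\varphi^{-n})'\|_\infty^{\,1/q-1/p}$ for $q<p$ and $\|U^n\|_{L^{q}\to L^{q}}\le\|(\varphi^{n})'\|_\infty^{\,1/p-1/q}$ for $q>p$. Since each of $(\varphi^{\pm n})'$ is a product of $n$ factors dominated by $C_\varphi:=\max(\|\varphi'\|_\infty,\|(\varphi^{-1})'\|_\infty)$, I get the crude but sufficient bound $\|(\varphi^{\pm n})'\|_\infty\le C_\varphi^{\,n}$.

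The transfer from $L^q$ to $X$ is where the hypothesis $\alpha_X=\beta_X=1/p$ is used, through Boyd's interpolation theorem (see~\cite{BS}). I fix $q_0<p<q_1$ with $1/q_1<\alpha_X\le\beta_X<1/q_0$; because the Boyd indices coincide and equal $1/p$, the exponents $q_0,q_1$ may be chosen arbitrarily close to $p$. Then $X$ is an interpolation space for the couple $(L^{q_0},L^{q_1})$, so there is a constant $C=C(X,q_0,q_1)$, \emph{independent of the operator}, with $\|T\|_{X\to X}\le C\max(\|T\|_{L^{q_0}},\|T\|_{L^{q_1}})$ for every $T$ bounded on both endpoints. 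Applying this to $T=U^n$ and inserting the estimates above gives $\|U^n\|_{X}\le C\,C_\varphi^{\,n\delta}$ with $\delta:=\max(1/q_0-1/p,\,1/p-1/q_1)$. Taking $n$-th roots and letting $n\to\infty$ eliminates $C$ and yields $\rho(U,X)\le C_\varphi^{\,\delta}$; finally letting $q_0,q_1\to p$ forces $\delta\to 0$ and gives $\rho(U,X)\le 1$. The same argument for $\varphi^{-1}$ gives $\rho(U^{-1},X)\le 1$, and the two bounds together prove $\sigma(U,X)\subseteq\Gamma$.

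I expect the only delicate point to be the quantitative form of Boyd's theorem: one needs the interpolation constant $C$ to be independent of $T$ so that it disappears upon extracting $n$-th roots, and one must respect the order of limits—first $n\to\infty$ with $q_0,q_1$ fixed, and only afterwards $q_0,q_1\to p$. This matters because $C(X,q_0,q_1)$ may blow up as $q_0,q_1\to p$, yet it never appears in the final bound $\rho(U,X)\le C_\varphi^{\,\delta}$. The remaining checks—the orientation/sign of $\varphi'$ and the routine change-of-variables computations—are straightforward.
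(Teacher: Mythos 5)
Your proof is correct and follows essentially the same route as the paper: both arguments recognize that $U$ acts as an isometry on $L^p$, bound $\|U^n\|$ on $L^q$ for exponents $q$ near $p$ by a power of $\|(\varphi^{\pm n})'\|_\infty$ (the paper via factoring $U^n$ into a multiplication times an $L^{p\mp\varepsilon}$-isometry, you via the equivalent change-of-variables computation), then apply Boyd's interpolation theorem with an operator-independent constant, take $n$-th roots with $n\to\infty$ first and the exponents tending to $p$ afterwards, and conclude $\rho(U,X)=\rho(U^{-1},X)\le 1$, hence $\sigma(U)\subseteq\Gamma$. Your explicit attention to the order of limits and to the operator-independence of the interpolation constant matches exactly the delicate points implicit in the paper's proof.
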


\begin{proof} Let $\varepsilon > 0$. By Boyd's interpolation theorem ( sse e.g.\cite[Theorem 5.16, page 153 and the discussion after Corollary 6.11, page 165]{BS}) for any small enough $\varepsilon$ the space $X$ will be an interpolation space between $L^{p-\varepsilon }(0,1)$ and $L^{p+\varepsilon }(0,1)$. Let $C(\varepsilon )$ be the corresponding interpolation constant~\cite[Lemma 4.3, page 20]{KPS}. Consider an $n        \in \mathds{Z}$ and the action of the operator $U^n$ on $L^{p-\varepsilon }(0,1)$ and $L^{p+\varepsilon }(0,1)$. We have
$$U^n =  ((\varphi^n)^\prime)^{-\frac{\varepsilon }{p(p - \varepsilon )}} ((\varphi^n)^\prime)^\frac{1}{p-\varepsilon } (x       \circ \varphi^n) =  ((\varphi^n)^\prime)^{-\frac{\varepsilon }{p(p - \varepsilon )}} U^n_{-\varepsilon } $$
and
$$U^n =  ((\varphi^n)^\prime)^{\frac{\varepsilon }{p(p + \varepsilon )}} ((\varphi^n)^\prime)^\frac{1}{p-\varepsilon } (x       \circ \varphi^n) =  ((\varphi^n)^\prime)^{\frac{\varepsilon }{p(p + \varepsilon )}} U^n_{+\varepsilon } $$
where $U_{-\varepsilon }$ and $U_{+\varepsilon }$ are invertible isometries of $L^{p-\varepsilon }(0,1)$ and $L^{p+\varepsilon }(0,1)$, respectively. Consequently we get
$$ \|U^n\|_{L(X)}^\frac{1}{|n|} \leq (C(\varepsilon ))^\frac{1}{|n|} \max{(\|\varphi^\prime\|_\infty^{-\frac{\varepsilon }{p(p - \varepsilon )}} , \|\varphi^\prime\|_\infty^{\frac{\varepsilon }{p(p + \varepsilon )}}, \|\frac{1}{\varphi^\prime}\|_\infty^{-\frac{\varepsilon }{p(p - \varepsilon )}}, \|\frac{1}{\varphi^\prime}\|_\infty^{\frac{\varepsilon }{p(p + \varepsilon )}})}, n        \in \mathds{Z}$$
whence $$\max{(\rho (U), \rho (U^{-1}))} \leq \max{(\|\varphi^\prime\|_\infty^{-\frac{\varepsilon }{p(p - \varepsilon )}} , \|\varphi^\prime\|_\infty^{\frac{\varepsilon }{p(p + \varepsilon )}}, \|\frac{1}{\varphi^\prime}\|_\infty^{-\frac{\varepsilon }{p(p - \varepsilon )}}, \|\frac{1}{\varphi^\prime}\|_\infty^{\frac{\varepsilon }{p(p + \varepsilon )}})} .$$
Finally, because $\varepsilon$ is arbitrary small we have $\rho (U) = \rho (U^{-1}) = 1$.
\end{proof}

Proposition~\ref{p1} combined with Theorem 4.5 in~\cite[page 248]{Ki3} provides the following result.

\begin{theorem} \label{t4.1} Let $X$ be a rearrangement-invariant Banach space intermediate between $L^1(0,1)$ and $L^\infty(0,1)$ and such that its Boyd indices satisfy the condition $0 < \alpha_X = \beta_X < 1$. Let $w, \varphi$ be invertible elements of $Lip_1^n(0,1)$ and assume that $\varphi$  maps $[0,1]$ onto itself. Let $p = \frac{1}{\alpha_X}$. Let $\tilde{T}$ be an invertible operator on $Lip_1^n(0,1)$ defined as
\[
  \tilde{T}f = w \big{(} (\varphi^\prime)^{n - n/p} \big{)} f     \circ \varphi , f     \in Lip_1^n(0,1).
\]
Then $\sigma_i(T) = \sigma_i(\tilde{T}), i=1, \ldots , 5.$
\end{theorem}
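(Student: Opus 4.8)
The plan is to reduce the comparison of $T$ on $W^{n,X}$ and $\tilde{T}$ on $Lip_1^n(0,1)=W^{n,\infty}$ to a comparison of two weighted composition operators acting on $X$ and on $L^\infty(0,1)$ respectively, and then to feed the $L^p$-type normalization of Proposition~\ref{p1} into Theorem 4.5 of~\cite{Ki3}. Since here $\varphi$ is only assumed to map $[0,1]$ onto itself, I would first dispose of the orientation: $\varphi$ fixes $0$ or $1$, and in the orientation-reversing case one passes to $T^2$ and $\tilde{T}^2$ exactly as in Theorems~\ref{t1.2} and~\ref{t2}(2b), so there is no loss in assuming $\varphi(0)=0$. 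Then Lemma~\ref{l9} and Corollary~\ref{c4} give $\sigma_i(T)=\sigma_i(A)$ for $i=1,2,3,4$ and $\sigma_2(T^\prime)=\sigma_2(A^\prime)$, where $Ax=w(\varphi^\prime)^n(x\circ\varphi)$ on $X$; applying the same two results with $X$ replaced by $L^\infty(0,1)$ reduces $\sigma_i(\tilde{T})$ to the essential spectra of the corresponding operator $\hat A$ attached to $\tilde{T}$ on $L^\infty(0,1)$.

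The heart of the matter is the identity $\sigma_i(A,X)=\sigma_i(\tilde{T})$ for $i=1,\dots,4$. Here I would use Proposition~\ref{p1} to split off from $A$ the $L^p$-isometric composition $U=(\varphi^\prime)^{1/p}(\,\cdot\circ\varphi)$, whose spectrum lies on $\Gamma$, writing $A=M_v\,U$ with $v=w(\varphi^\prime)^{\,n-1/p}$ and $M_v$ a multiplication operator. On $L^\infty(0,1)$ the isometric composition is $C_\varphi$ itself, carrying no power of $\varphi^\prime$, and the exponent $n-n/p$ defining $\tilde{T}$ is calibrated to be exactly the correction these two normalizations differ by. Theorem 4.5 of~\cite{Ki3} is the tool that turns this into an identity of essential spectra: in the absence of periodic points of positive measure it describes $\sigma_i$ of an invertible weighted composition operator as a rotation-invariant set whose radial structure is determined only by $\varphi$ and by the geometric means of the normalized multiplier along orbits — equivalently, by the local data $|v|$ at the fixed points and on the intervals complementary to $F$. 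Since these data are engineered to coincide for $A$ on $X$ and for $\tilde{T}$, the essential spectra agree.

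It then remains to handle $\sigma_2(T^\prime)$ and the outermost index $i=5$. For $\sigma_2(T^\prime)$ one passes to adjoints and repeats the previous paragraph, observing that $A^\prime$ and $\hat A^\prime$ are again invertible disjointness preserving operators falling under Theorem 4.5 of~\cite{Ki3}. For $i=5$ the component-filling definition requires knowing how the resolvent set of $T$ meets the components of $\mathbb{C}\setminus\sigma_1(T)$; this follows from $\sigma_3(T)=\sigma_3(A)$, $\sigma_4(T)=\sigma_4(A)$ together with the rotation invariance of all the $\sigma_i$, which the condition $mes(F)=0$ secures through the conjugation argument of Theorem~\ref{t6}(1) (Proposition~\ref{p1} again supplying the unimodular $g$ with $\|g\circ\varphi-\alpha g\|$ small that conjugates $A$ to $\alpha A$ up to a small perturbation). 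Rotation invariance makes each $\sigma_i$ a union of annuli and circles, so equality on $\sigma_3,\sigma_4$ propagates to $\sigma_5$.

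The step I expect to be the main obstacle is the second one: upgrading Proposition~\ref{p1}, which only controls the radii $\rho_e(A)$ and $\rho_e(A^{-1})$, to control of the essential spectra as \emph{sets}, and verifying that the defining exponent $n-n/p$ is exactly right. When $\sigma(A)$ is a single annulus this is immediate from rotation invariance, but when $\sigma(A)$ is disconnected one must reproduce on the interpolation space $X$ the splitting of $X$ into $\varphi$-invariant bands on which $A$ has prescribed spectral radii — the device already used in Theorem~\ref{t6}(3c) — and then check that these bands, and the accompanying local radii $\rho(A|L_a)$, transfer faithfully between $X$ and $L^\infty(0,1)$ under the $(\varphi^\prime)^{1/p}$-normalization, so that the normalized multiplier $v$ is genuinely the only invariant. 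Confirming this transfer is the delicate point, and it is precisely what Theorem 4.5 of~\cite{Ki3} is invoked to supply.
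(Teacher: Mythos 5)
Your proposal reconstructs exactly the skeleton that the paper's own (one-sentence) proof compresses: the paper proves this theorem by writing ``Proposition~\ref{p1} combined with Theorem 4.5 in~\cite{Ki3} provides the following result,'' and your steps --- reduction via Lemma~\ref{l9}/Corollary~\ref{c4} to the derived operators on $X$ and on $L^\infty(0,1)$, factorization through the normalized composition of Proposition~\ref{p1}, and a transfer of essential spectra via Theorem 4.5 of~\cite{Ki3} --- are precisely what that sentence is meant to abbreviate. So in structure you and the paper agree.

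The genuine gap is at the step you yourself flag as ``the delicate point'' and then delegate to the citation: verifying that the exponent $n-n/p$ calibrates the two normalizations. A transfer theorem of the type of Theorem 4.5 in~\cite{Ki3} can only assert that $M_vU_1$ on $X$ and $M_vU_2$ on $L^\infty(0,1)$ have the same essential spectra when the central multiplier $v$ is the \emph{same}; whether the two multipliers coincide is an exponent computation that no citation can supply, and when one does it, it fails. Your $X$-side computation is right: $A=M_vU$ with $v=w(\varphi^\prime)^{n-1/p}$. But the derived operator of $\tilde T$ on $L^\infty(0,1)$ is $M_{\tilde w(\varphi^\prime)^n}C_\varphi$ with $\tilde w=w(\varphi^\prime)^{n-n/p}$, so its multiplier relative to the isometry $C_\varphi$ is $w(\varphi^\prime)^{2n-n/p}$, not $w(\varphi^\prime)^{n-1/p}$; the two differ by the factor $(\varphi^\prime)^{n(1-1/p)+1/p}$, whose exponent is strictly positive. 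This is not harmless bookkeeping: at an isolated fixed point $a$ with $\varphi^\prime(a)\neq 1$ the discrepancy changes the essential spectra. For instance, take $n=1$, $X=L^2(0,1)$ (so $p=2$), $w\equiv 1$, and $\varphi$ with $\varphi(x)<x$ on $(0,1)$, $\varphi^\prime(0)=1/2$, $\varphi^\prime(1)=2$: then $\sigma_3(T)$ is the annulus with radii $2^{-1/2}$ and $2^{1/2}$, while $\sigma_3(\tilde T)$ is the annulus with radii $2^{-3/2}$ and $2^{3/2}$. Matching the multipliers forces $\tilde w=w(\varphi^\prime)^{-1/p}$, so either the exponent in the statement is a misprint or additional hypotheses (such as $\varphi^\prime\equiv 1$ on the fixed-point set) are needed; in either case a correct proof must contain this calibration computation rather than take it on faith, and your proposal's hole sits exactly where the content of the theorem lies. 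A smaller defect: your treatment of $i=5$ invokes rotation invariance through $mes(F)=0$, which is a hypothesis of Theorem~\ref{t6} but not of this theorem; the hypothesis-free route is the one used in Theorem~\ref{t1.1}, namely that any spectral point lying in a component of $\mathds{C}\setminus\sigma_1(T)$ that meets the resolvent set is an isolated eigenvalue of finite multiplicity and therefore never contributes to $\sigma_5$.
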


\begin{remark} \label{r4.2} Conditions of Theorem~\ref{t4.1} are satisfied in particular in the case of classic Sobolev spaces $W^{1,p}$, $1 < p < \infty$. Moreover the conclusion of Theorem~\ref{t4.1} as we know already is true for $W^{1, \infty} = Lip_1(0,1)$ and it can be easily extended to the space $W^{1,1}$.

\end{remark}

\begin{corollary} \label{c4.3} Assume conditions of Theorem~\ref{t4.1}. Let $\tilde{w} = w \big{(} (\varphi^\prime)^{n - n/p} \big{)} $ Then
\[
  \sigma (T) \setminus \sigma (A)     \subseteq \tilde{w}(F).
\]

\end{corollary}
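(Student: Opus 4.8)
The plan is to compute the discrete part $\sigma(T)\setminus\sigma(A)$ by transferring the problem to the already-understood space $Lip_1^n(0,1)$ via Theorem~\ref{t4.1}, and then to pin down each isolated eigenvalue by a direct eigenvector argument of the type used in Lemma~\ref{l3} and Theorem~\ref{t1.1}. First I would record the structural facts. By Corollary~\ref{c4} we have $\sigma_i(T)=\sigma_i(A)$ for $i=1,2,3,4$, and by Theorem~\ref{t6} (whose conclusions hold here by Remark~\ref{r4.1}) the set $\sigma(A)$ is contained in $\sigma(T)$ and $\sigma(T)\setminus\sigma(A)$ consists of isolated eigenvalues of $T$ of finite multiplicity. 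Such an eigenvalue $\lambda$, being Fredholm of index zero and isolated in $\sigma(T)$, lies outside $\sigma_5(T)$; combined with $\sigma(A)\subseteq\sigma_5(T)\subseteq\sigma(T)$ this gives $\sigma_5(T)=\sigma(A)$, so $\sigma(T)\setminus\sigma(A)=\sigma(T)\setminus\sigma_5(T)$ and every point to be accounted for is an isolated eigenvalue sitting in a component of $\mathds{C}\setminus\sigma_1(T)$ that meets the resolvent set.

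Next I would invoke Theorem~\ref{t4.1}: $\sigma_i(T)=\sigma_i(\tilde T)$ for $i=1,\dots,5$, where $\tilde Tf=\tilde w\,(f\circ\varphi)$ is the weighted automorphism of $Lip_1^n(0,1)$ with $\tilde w=w(\varphi')^{n-n/p}$. Writing $\tilde A$ for the $L^\infty(0,1)$-companion of $\tilde T$ furnished by Section~3, the chain $\sigma(A)=\sigma_5(T)=\sigma_5(\tilde T)=\sigma(\tilde A)$ shows that $T$ and $\tilde T$ carry one and the same essential annulus; hence the isolated eigenvalues of $T$ outside $\sigma(A)$ and those of $\tilde T$ outside $\sigma(\tilde A)$ live in exactly the same two complementary components. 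By the description of the discrete spectrum of weighted automorphisms of $Lip_1^n(0,1)$ obtained in Section~3 (Theorems~\ref{t2} and~\ref{t3}), every isolated eigenvalue of $\tilde T$ outside $\sigma(\tilde A)$ has the form $\tilde w(a)$ with $a$ isolated in $F$, i.e. $\sigma(\tilde T)\setminus\sigma(\tilde A)\subseteq\tilde w(F)$.

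It remains to show that each isolated eigenvalue of $T$ outside $\sigma(A)$ is itself of the form $\tilde w(a)$, and this is the heart of the matter, since Theorem~\ref{t4.1} transfers only the essential spectra and not, a priori, the discrete ones. I would argue directly: given $Tf=\lambda f$ with $f\in W^{n,X}\setminus\{0\}$ and, say, $|\lambda|>\rho(A)$ (the case $|\lambda|<1/\rho(A^{-1})$ reduces to this one by passing to $T^{-1}$), iterate to $w_k(f\circ\varphi^k)=\lambda^k f$ and pass to $g=f^{(n)}\in X$, whose growth under the localized companion near a fixed point controls $|\lambda|$, exactly as $w(0)$ was extracted in Lemma~\ref{l3}. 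The decisive input is Proposition~\ref{p1}: twisting the composition by $(\varphi')^{1/p}$ renders it isometric on $X$, so the local growth rate of $g$ near a fixed point $a$ is dictated not by the naive weight $|w(a)|$ but by its Boyd-normalized value $|\tilde w(a)|$; a point $a$ not isolated in $F$ satisfies $\varphi'(a)=1$, so that $\tilde w(a)=w(a)\in\sigma(A)$ and is excluded. I expect the step requiring the most care to be precisely this $X$-norm growth estimate: establishing that the Boyd exponent supplied by Proposition~\ref{p1} is exactly what converts the fixed-point weight $w(a)$ into $\tilde w(a)$, thereby placing $\lambda$ in $\tilde w(F)$.
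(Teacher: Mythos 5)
Your diagnosis of where the difficulty lies is exactly right, and it is worth stressing that the paper itself offers no argument here: Corollary~\ref{c4.3} is stated as an immediate consequence of Theorem~\ref{t4.1}, yet Theorem~\ref{t4.1} equates only $\sigma_1,\dots,\sigma_5$ of $T$ and $\tilde T$, and the passage from essential spectra to the discrete part $\sigma(T)\setminus\sigma(A)$ is precisely what you flag as ``the heart of the matter.'' Your structural preliminaries are essentially fine: by Corollary~\ref{c4} any $\lambda\in\sigma(T)\setminus\sigma(A)$ lies outside $\sigma_4(T)=\sigma_4(A)$, hence is an eigenvalue of finite multiplicity with $\lambda I-T$ Fredholm of index zero. (Two caveats: Theorem~\ref{t6} is not available under the hypotheses of Theorem~\ref{t4.1}, since its assumption $mes(F)=0$ is not made there and Remark~\ref{r4.1} removes only the Fatou condition; and $\sigma(A)\subseteq\sigma_5(T)$ requires knowing $\sigma_4(A)=\sigma(A)$, which needs a citation to~\cite{Ki3} rather than being immediate.)

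The genuine gap is in the final step, and it is not a gap that more careful estimation can close, because the mechanism you propose is false. If $f$ is an eigenfunction of $T$ and $a\in F$ satisfies $f(a)\neq 0$, then evaluating $w(x)f(\varphi(x))=\lambda f(x)$ at $x=a$ gives $\lambda=w(a)$ \emph{exactly}: a pointwise identity in which $X$, its Boyd indices, and Proposition~\ref{p1} play no role. The Boyd exponent enters only in the complementary regime where $f$ vanishes at $a$ together with $f',\dots,f^{(n-1)}$; there $f^{(n)}\in X\subseteq L^{p-\varepsilon}$ gives $|f(x)|\le C|x-a|^{\,n-1/(p-\varepsilon)}$ (note the exponent is $n-1/p$, not $n-n/p$), and the orbit argument of Lemma~\ref{l3} then places $|\lambda|$ inside the essential annulus of $A$. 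So what your method actually yields is $\sigma(T)\setminus\sigma(A)\subseteq\{w(a)(\varphi'(a))^{j}\,:\,a\in F,\ 0\le j\le n-1\}$, the \emph{untwisted} weight values, not $\tilde w(F)$. The difference is not cosmetic: take $n=1$, $X=L^2$ (so $p=2$), $\varphi$ a smooth diffeomorphism with $\varphi(x)<x$ on $(0,1)$, $\varphi'(0)=10^{-2}$, $\varphi'(1)=2$, and $w(x)=10-9x$. The product $f(x)=\prod_{k\ge 0}\bigl(w(\varphi^k(x))/10\bigr)$ converges to a Lipschitz function with $f(0)=1$, so $f\in W^{1,X}$ and $Tf=10f$; on the other hand, factoring out the $L^2$-isometry $x\mapsto(\varphi')^{1/2}(x\circ\varphi)$ of Proposition~\ref{p1} and using that $\varphi$-invariant measures are carried by $F$ (the same computation as in Corollary~\ref{c1.1}) gives $\rho(A)\le\max_F|w|(\varphi')^{1/2}=\sqrt2$, while $\tilde w(F)=\{w(0)\varphi'(0)^{1/2},\,w(1)\varphi'(1)^{1/2}\}=\{1,\sqrt2\}$. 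Thus $10\in\sigma(T)\setminus\sigma(A)$ but $10\notin\tilde w(F)$: the assertion your final step is meant to establish fails at exactly the untwisted values $w(a)$, so the proof cannot be completed along this route (nor, for that matter, along the route the paper's own wording suggests); any correct description of the discrete part must be phrased in terms of $w(a)(\varphi'(a))^{j}$ with integer $j$, not in terms of the Boyd-normalized weight $\tilde w$.
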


\begin{corollary} \label{c4.2} Assume conditions of Theorem~\ref{t4.1}. Assume additionally that $F=\{0,1\}$. Then $\sigma (A)$ is an annulus or circle centered at $0$ and $\sigma (T) \setminus \sigma (A)     \subseteq \{\tilde{w}(0), \tilde{w}(1)\}$.

\end{corollary}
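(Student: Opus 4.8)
The plan is to treat the two assertions separately: the inclusion for $\sigma(T)\setminus\sigma(A)$ is read off from Corollary~\ref{c4.3}, while the shape of $\sigma(A)$ is obtained by transferring the question to the algebra $Lip_1^n(0,1)$ through Theorem~\ref{t4.1}. First I would unpack the hypothesis $F=\{0,1\}$. Since $\varphi$ is a homeomorphism of $[0,1]$ the endpoints are fixed, so $\varphi(0)=0$ and $\varphi(1)=1$; moreover $\varphi-\mathrm{id}$ is continuous and has no zero in $(0,1)$, hence keeps a constant sign there, and we may and do assume $\varphi(x)<x$ on $(0,1)$, the case $\varphi(x)>x$ being symmetric. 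In particular $mes(F)=0$. The second assertion is then immediate: Corollary~\ref{c4.3} gives $\sigma(T)\setminus\sigma(A)\subseteq\tilde w(F)$, and $\tilde w(F)=\{\tilde w(0),\tilde w(1)\}$.

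For the first assertion I would prove rotation invariance and connectedness of $\sigma(A)$ in turn. Rotation invariance (the ``centered at $0$'' part) follows from $mes(F)=0$ exactly as in the proof of Theorem~\ref{t6}(1): the homeomorphism $\tilde\varphi$ induced by $A$ on the Gelfand space $Q$ of $L^\infty(0,1)$ has no periodic points by Frolik's theorem, so $\sigma(A)$ is invariant under multiplication by $\Gamma$ (see~\cite[Theorem 13.4]{AAK}). It then remains to show that $\sigma(A)$ is connected, i.e. that $\{|\lambda|:\lambda\in\sigma(A)\}$ is an interval. Here I would pass to $Lip_1^n(0,1)$: by Corollary~\ref{c4} and Theorem~\ref{t4.1} one has $\sigma_3(A)=\sigma_3(T)=\sigma_3(\tilde T)$, and since $\tilde\varphi$ has no periodic points the general principle behind Corollary~\ref{c1.10} gives $\sigma_3(A)=\sigma(A)$ and likewise $\sigma_3(\tilde T)=\sigma(\tilde A)$, where $\tilde A$ is the $L^\infty$-model of $\tilde T$ provided by Theorem~\ref{t3}(1). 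Thus $\sigma(A)=\sigma(\tilde A)$, and because $\varphi(x)<x$ on $(0,1)$ the argument of Lemma~\ref{l2}, in its $Lip_1^n$ form, shows that $\sigma(\tilde A)$ is an annulus or a circle centered at $0$.

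The main obstacle is this connectedness step, and concretely the transfer of Lemma~\ref{l2} to the present setting. The delicate point is that $\sigma(A)$ is computed on the interpolation space $X$, so the inner and outer radii are governed by the effective exponent $p=1/\alpha_X$ rather than by $\|w\varphi'\|_\infty$ directly; this is precisely what Proposition~\ref{p1} supplies, since it guarantees that the normalized composition operator $(\varphi')^{1/p}(\,\cdot\,\circ\varphi)$ has spectrum contained in $\Gamma$, and it is through this factorization $A=M_{w(\varphi')^{n-1/p}}\,(\varphi')^{1/p}(\,\cdot\,\circ\varphi)$ that the Boyd-index hypothesis enters. Were $\sigma(A)$ disconnected, a spectral splitting across some circle $|\lambda|=r$ would yield two $\varphi$-invariant subsets of $(0,1)$ of positive measure on which $|w_N|(\varphi^N)'$ is uniformly contracting, respectively expanding; using $\varphi(x)<x$ to concentrate mass near the endpoints and invoking the Kamowitz--Scheinberg theorem on a suitable $T$-invariant subalgebra produces a contradiction, exactly as in Lemma~\ref{l2}. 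I would therefore devote most of the work to verifying that this dichotomy argument survives the replacement of $L^\infty(0,1)$ by $X$, which is where Proposition~\ref{p1} and the hypothesis $\alpha_X=\beta_X$ are indispensable.
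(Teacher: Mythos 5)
Your proof is correct and is essentially the derivation the paper intends (Corollary~\ref{c4.2} is stated there without proof): the inclusion $\sigma(T)\setminus\sigma(A)\subseteq\{\tilde w(0),\tilde w(1)\}$ is exactly Corollary~\ref{c4.3} together with $\tilde w(F)=\{\tilde w(0),\tilde w(1)\}$, rotation invariance of $\sigma(A)$ follows from $mes(F)=0$ via Frolik's theorem and \cite[Theorem 13.4]{AAK}, and connectedness is obtained by transferring through Corollary~\ref{c4} and Theorem~\ref{t4.1} to $Lip_1^n(0,1)$ and applying the Lemma~\ref{l2} annulus argument (in its $Lip_1^n$ form, with $\varphi(x)<x$ on $(0,1)$ after the harmless normalization you make). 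Two small refinements: the identity $\sigma(A)=\sigma_3(A)$ on $X$, which your chain genuinely needs, is not literally ``the principle behind Corollary~\ref{c1.10}'' (a $C[0,1]$ statement) but is supplied by the paper's own Theorem~\ref{t6}(2)--(3) and Remark~\ref{r4.1} (aperiodicity forces any eigenvalue of $A$ to have infinite-dimensional eigenspace, with the order-dual argument handling the residual spectrum), and your closing plan to re-run the Lemma~\ref{l2} dichotomy directly on $X$ is superfluous once Theorem~\ref{t4.1} is taken as given, since Proposition~\ref{p1} and the Boyd-index hypothesis enter only through the proof of that theorem.
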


\section{Some related problems.}

The results of the current paper are based on the following two circumstances:

(a) Compactness of the Volterra operator.

(b) Exceptionally simple dynamics of homeomorphisms of an interval.

Therefore it probably will be not very hard to extend these results to the class of non-invertible operators when $\varphi$ is a homeomorphism of $[0,1]$ such that $\varphi$ belongs to the algebra $C^{(n)}[0,1]$ (or $Lip1^n[0,1]$) but is not invertible in this algebra. The same relates to the case when the weight $w$ is not invertible in the corresponding algebra.

The following problems seem to be more challenging.

\noindent (1) Description of essential spectra of weighted composition operators in $C^{(n)}[0,1]$ or $W^{n,X}[0,1]$ in the case when $\varphi$ is an arbitrary smooth mapping of $[0,1]$ into itself. While the spectrum of weighted compositions in $C(K)$ is described (see e.g.`\cite{Ki1}) and we have some information about the spectrum of non-invertible disjointness preserving operators on Banach lattices (see~\cite{AAK}) , not much is known about \textit{essential spectra} of such operators even in the case of $C(K)$.

\noindent (2) The case of the infinite interval $(-\infty, \infty)$. One of the main difficulties here is that the corresponding Volterra operator is not compact.

\noindent (3) The case of spaces of smooth functions defined in a domain in $\mathds{R}^m, m>1$.

\end{document}